\newtheorem{theorem}{Theorem}[section]
\newtheorem{corollary}[theorem]{Corollary}
\newtheorem{lemma}[theorem]{Lemma}
\newtheorem{proposition}[theorem]{Proposition}
\newtheorem{assumption}[theorem]{Assumption}
\theoremstyle{definition}
\newtheorem{definition}[theorem]{Definition}
\newtheorem{remark}[theorem]{Remark}
\newcommand{\R}{\mathbb{R}}
\newcommand{\N}{\mathbb{N}}
\newcommand{\C}{\mathbb{C}}
\newcommand{\Z}{\mathbb{Z}}
\begin{document}

\title{Square function estimates for cones over quadratic manifolds}
\author{Robert Schippa}
\address[R.~Schippa]{UC Berkeley, Department of Mathematics, 847 Evans Hall
Berkeley, CA 94720-3840}
\email{rschippa@berkeley.edu}

\makeatletter
\@namedef{subjclassname@2020}{%
  \textup{2020} Mathematics Subject Classification}
\makeatother

\begin{abstract}
We extend the $L^4$-square function estimates for the parabola and the half-cone to quadratic manifolds in higher dimensions and their conical extensions. To this end, we require transversality for the tangent spaces of the quadratic manifolds at separated points. This allows us to show biorthogonality for the associated system of quadratic equations. For the conical extensions we obtain a wave envelope estimate by High-Low arguments. 
\end{abstract}

\keywords{square function estimates, wave envelope estimates, local smoothing estimates}
\subjclass[2020]{35Q53, 42B37.}

\maketitle

\section{Introduction}
In the following we show $L^4$-square function estimates for quadratic manifolds and their conical extensions. Consider the mapping
\begin{equation}
\label{eq:QuadraticMapping}
Q: \R^d \to \R^l, \quad \xi \mapsto (q_1(\xi),\ldots,q_l(\xi))
\end{equation}
with $q_k(\xi)= \frac{1}{2} \langle \xi, A_k \xi \rangle + \langle b_k, \xi \rangle + c_k$ denoting (affine) quadratic forms with $A_k \in \text{Sym}(\R^d)$, $b_k \in \R^d$, and $c_k \in \R$. Denote the ambient dimension with $n = d+l$. The base frequencies $\xi$ are presently taken from the unit ball. 

\subsection{Square function estimates for quadratic manifolds}

The quadratic manifolds are given as graph of $Q$:
\begin{equation}
\label{eq:ParametrizationQuadraticManifold}
M_Q = \{ (\xi, Q(\xi)) \in \R^n : \, \xi \in B_d(0,1) \}.
\end{equation}

The simplest non-trivial instance is described by the parabola $\mathbb{P}_2$, which is the image of the map\footnote{In the following we abuse notation and typically refer to the map and its image without distinction.}
\begin{equation*}
(-1,1) \ni \xi \mapsto (\xi,\xi^2) \in \R^2.
\end{equation*}

\smallskip

We are currently interested in square function estimates for the canonical partition of the $\delta$-neighborhood $\mathcal{N}_{\delta}(M_Q)$. To this end, we decompose the base frequencies into $\delta^{\frac{1}{2}}$-balls, which essentially flattens the neighborhood. We denote the collection of $\delta^{\frac{1}{2}} \times \delta$-caps $\theta$ forming an essentially disjoint cover of the $\delta$-neighborhood with $\Theta_{\delta}$. The precise definition is given in Section \ref{section:Preliminaries}. The classical C\'ordoba--Fefferman square function estimate (first appeared in \cite{Fefferman1973}, see also \cite{Cordoba1979,Cordoba1982}) reads for a function $f \in \mathcal{S}(\R^2)$ with $\text{supp}(\hat{f}) \subseteq \mathcal{N}_{\delta}(\mathbb{P}_2)$:
\begin{equation*}
\big( \int_{\R^2} |f|^4 \big)^{\frac{1}{4}} \lesssim \big( \int_{\R^2} \big( \sum_{\theta \in \Theta_\delta} |f_{\theta}|^2 \big)^2 \big)^{\frac{1}{4}}.
\end{equation*}

\smallskip

A square function estimate for the complex parabola $\C \mathbb{P}^2$ was recorded by Biggs--Brandes--Hughes \cite{BiggsBrandesHughes2022}. The graph mapping is given by $\C \ni z \mapsto (z,z^2) \in \C^2$, which can be regarded as two-manifold in $\R^4$: $(s,t) \mapsto (s,t,s^2 - t^2, 2st)$.
 For a function $f \in \mathcal{S}(\R^4)$ with Fourier support in a $\delta$-neighborhood $\text{supp}(\hat{f}) \subseteq \mathcal{N}_{\delta}(\C \mathbb{P}_2)$ we have:
\begin{equation*}
\big( \int_{\R^4} |f|^4 \big)^{\frac{1}{4}} \lesssim \big( \int_{\R^4} \big( \sum_{\theta \in \Theta_{\delta}} |f_{\theta}|^2 \big)^2 \big)^{\frac{1}{4}}.
\end{equation*}

Moreover, very recently sharp square function estimates for real moment curves\\ $s \mapsto (s,s^2/2,\ldots,s^n/n!)$ were established by Maldague \cite{Maldague2024} in the cubic case and for all higher degrees, including auxiliary conical sets (\emph{$m$th order Taylor cones}), by Guth--Maldague \cite{GuthMaldague2023}.


\smallskip

Presently, we are interested in $L^4$-square function estimates for general quadratic manifolds. The following condition naturally appears for the generators of the quadratic forms. 
\begin{assumption}
\label{ass:SpanCondition}
Let $A_1,\ldots,A_l \in \text{Sym}(\R^d)$ be symmetric $d \times d$-matrices. We assume that there is $c>0$ such that for any $\nu \in \mathbb{S}^{d-1}$ there is a subset of $\{ A_1,\ldots,A_l \}$ such that
\begin{equation}
\label{eq:SpanCondition}
\det (A_{i_1} \nu, \ldots, A_{i_d} \nu) \geq c > 0. 
\end{equation}
\end{assumption}

As a consequence we obtain quantitative transversality of the tangent spaces at different points: Let $\mathbf{t}_i(\xi) = \partial_i (\xi,Q(\xi))$, $i=1,\ldots,d$ denote tangent vectors spanning the tangent space $T_\xi (M_Q)$. We shall see below that
\begin{equation*}
|\mathbf{t}_1(\xi_1) \wedge \ldots \wedge \mathbf{t}_d(\xi_1) \wedge \mathbf{t}_1(\xi_2) \wedge \ldots \wedge \mathbf{t}_d(\xi_2)| \gtrsim_{|\xi_1-\xi_2|} 1.
\end{equation*}

Under the above assumption, we can show the following:
\begin{theorem}[Square function estimates for quadratic manifolds]
\label{thm:SquareFunctionQuadraticManifold}
 Suppose that the quadratic manifold $M_Q$ is given by \eqref{eq:ParametrizationQuadraticManifold} and for the generators $A_i = \partial^2 q_i$ of the quadratic forms Assumption \ref{ass:SpanCondition} holds true. Then, for $f \in \mathcal{S}(\R^{d+l})$ with $\text{supp}(\hat{f}) \subseteq \mathcal{N}_{\delta}(M_Q)$ it holds
\begin{equation}
\label{eq:L4SquareFunctionQuadraticManifold}
\| f \|_{L^4(\R^n)} \lesssim \big\| \big( \sum_{\theta \in \Theta_{\delta}} |f_{\theta}|^2 \big)^{\frac{1}{2}} \big\|_{L^4(\R^n)}.
\end{equation}
\end{theorem}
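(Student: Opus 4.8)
The plan is to follow the classical Córdoba–Fefferman biorthogonality argument, adapted to the higher-dimensional quadratic setting. Squaring the $L^4$ norm, we write $\|f\|_{L^4}^2 = \|f^2\|_{L^2} = \|\widehat{f^2}\|_{L^2}$ and expand $f = \sum_{\theta} f_\theta$, so that $\widehat{f^2} = \sum_{\theta,\theta'} \widehat{f_\theta} * \widehat{f_{\theta'}}$. By Plancherel it suffices to control $\sum_{\theta_1,\theta_1',\theta_2,\theta_2'} \int (\widehat{f_{\theta_1}} * \widehat{f_{\theta_1'}}) \overline{(\widehat{f_{\theta_2}} * \widehat{f_{\theta_2'}})}$. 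The key geometric input is \emph{biorthogonality}: the convolution $\widehat{f_\theta} * \widehat{f_{\theta'}}$ is supported in a set $S(\theta,\theta')$ (the Minkowski sum of the two caps, which lives near $\{(\eta+\eta', Q(\eta)+Q(\eta')) : \eta \in \theta_{\text{base}}, \eta' \in \theta'_{\text{base}}\}$, thickened at scale $\delta$), and the claim is that for fixed $(\theta_1,\theta_1')$ there are only $O(1)$ pairs $(\theta_2,\theta_2')$ with $S(\theta_1,\theta_1') \cap S(\theta_2,\theta_2') \neq \emptyset$. Granting this, Cauchy–Schwarz in the overlap reduces the sum to $\sum_{\theta,\theta'} \|f_\theta f_{\theta'}\|_{L^2}^2 \lesssim \|(\sum_\theta |f_\theta|^2)^{1/2}\|_{L^4}^4$, which is exactly \eqref{eq:L4SquareFunctionQuadraticManifold}.

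The heart of the matter is establishing the biorthogonality, and here Assumption \ref{ass:SpanCondition} enters. Suppose $S(\theta_1,\theta_1')$ and $S(\theta_2,\theta_2')$ overlap; writing $\theta_i$ as centered at base point $\xi_i$ and $\theta_i'$ at $\xi_i'$, an overlap forces $\xi_1 + \xi_1' = \xi_2 + \xi_2' + O(\delta^{1/2})$ in the $\R^d$-component and, crucially, $Q(\xi_1) + Q(\xi_1') = Q(\xi_2) + Q(\xi_2') + O(\delta)$ in the $\R^l$-component. Setting $\xi_2 = \xi_1 + h$ and hence $\xi_2' = \xi_1' - h + O(\delta^{1/2})$, the second condition becomes, after Taylor expansion, a system that to leading order reads $\langle h, (A_k(\xi_1 - \xi_1'))\rangle + \tfrac{1}{2}\langle h, A_k h\rangle \cdot(\text{sign terms}) = O(\delta)$ for each $k = 1,\dots,l$. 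Writing $\nu = (\xi_1 - \xi_1')/|\xi_1 - \xi_1'|$, the linear-in-$h$ part is governed by the vectors $A_k \nu$, and by Assumption \ref{ass:SpanCondition} some $d$ of these vectors form a matrix with determinant $\geq c$; this pins down $h$ to lie in a $\delta$-ball (modulo the quadratic correction, which is absorbed since $|h| \lesssim \delta^{1/2}$ makes the quadratic term $O(\delta)$ and hence lower order, or handled by a standard non-degeneracy/implicit-function argument). Thus $\xi_2$, and then $\xi_2'$, are determined up to $O(\delta^{1/2})$, meaning $O(1)$ choices of caps — exactly the biorthogonality claimed. One must also handle the degenerate regime where $|\xi_1 - \xi_1'| \lesssim \delta^{1/2}$, i.e. $\theta_1$ and $\theta_1'$ are neighbors: there the Minkowski sum is itself a single $O(\delta^{1/2}) \times O(\delta)$ cap (up to dilation by $2$), so the overlap count is trivially $O(1)$ by a volume/packing argument.

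The main obstacle I anticipate is the interaction between the linear term $A_k \nu$ and the quadratic term $\tfrac12 A_k h$ in the constraint system when $|\xi_1 - \xi_1'|$ is \emph{small but not negligible} compared to $|h|$ — i.e. ensuring the span condition still delivers a clean lower bound uniformly. The honest fix is to make the constant in the biorthogonality depend on a lower bound for $|\xi_1-\xi_1'|$ (as the statement before the theorem already signals with the notation $\gtrsim_{|\xi_1 - \xi_2|} 1$), and then to run a pigeonholing over dyadic scales of this separation, summing a geometric (or at worst logarithmically divergent, then absorbed) series. Concretely, I would decompose the pair sum according to $|\xi_1 - \xi_1'| \sim 2^{-j}\delta^{1/2}$ ... wait, rather $|\xi_1 - \xi_1'| \sim 2^{-j}$ for $2^{-j} \gtrsim \delta^{1/2}$, apply the quantitative transversality on each slab, and check that the resulting constants sum. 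The secondary technical point is to verify carefully that the quantitative transversality wedge estimate stated before the theorem — which is really just the Gram-determinant reformulation of Assumption \ref{ass:SpanCondition} — does imply the discrete overlap bound; this is linear algebra plus the observation that $\det(\mathbf{t}_1(\xi_1),\dots,\mathbf{t}_d(\xi_2))$ up to sign equals (a constant times) $\det(A_{k}(\xi_1-\xi_2))_k$-type expressions, which I would unwind using the explicit form $\mathbf{t}_i(\xi) = e_i + \sum_k \partial_i q_k(\xi) e_{d+k}$.
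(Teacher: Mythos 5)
Your overall architecture — expand $\|f\|_{L^4}^4$ via Plancherel, show that only $O(1)$ quadruples of caps contribute (biorthogonality), and close with Cauchy--Schwarz — is exactly the paper's, and Assumption \ref{ass:SpanCondition} enters in the same place. The difference, and the one point you need to repair, is your handling of the "main obstacle." The obstacle is an artifact of your parametrization by $h$ and $\xi_1-\xi_1'$; it disappears if you organize the quadratic constraints symmetrically in the two difference vectors. For quadratic $q_m$ the double mean value identity gives exactly
\begin{equation*}
q_m(\xi_1)+q_m(\xi_3)-q_m(\xi_2)-q_m(\xi_4)=\langle \xi_{12},A_m\xi_{14}\rangle,\qquad \xi_{ij}=\xi_i-\xi_j,
\end{equation*}
(in your variables $\xi_{12}=-h$ and $\xi_{14}=(\xi_1-\xi_1')+h$, so your linear-plus-quadratic system in $h$ is precisely this bilinear form). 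The biorthogonality claim is that $|\xi_{12}|\lesssim\delta^{1/2}$ or $|\xi_{14}|\lesssim\delta^{1/2}$; assuming both are $\gg\delta^{1/2}$ and dividing by $|\xi_{12}||\xi_{14}|$ yields $|\langle\xi_{12}',A_m\xi_{14}'\rangle|\ll 1$ for \emph{unit} vectors $\xi_{12}',\xi_{14}'$. Applying \eqref{eq:SpanCondition} to $\nu=\xi_{14}'$, one writes $\xi_{12}'=\sum_m a_m A_{i_m}\xi_{14}'$ with $|a_m|\lesssim 1$ by Cramer's rule and gets $1=|\langle\xi_{12}',\xi_{12}'\rangle|\ll1$, a contradiction that is completely uniform in the separation $|\xi_1-\xi_1'|$. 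So no dyadic decomposition in the separation is needed — and your fallback of accepting a "logarithmically divergent, then absorbed" series is not actually available, since the theorem asserts a constant with no $\delta^{-\varepsilon}$ loss and there is nothing to absorb it into. A second, minor, imprecision: the constraint system does not pin $h$ to a single $\delta^{1/2}$-ball; $h\approx \xi_1'-\xi_1$ (the swap $\theta_2\approx\theta_1'$, $\theta_2'\approx\theta_1$) is always a solution, which is consistent with the correct conclusion that the solution set is the union of the diagonal and the swapped diagonal, still giving the $O(1)$ count you need.
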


\begin{remark}
The implicit constant depends on the transversality parameter $c$ in \eqref{eq:SpanCondition} and the spectral radii of the $A_i$.
\end{remark}

The above square function estimate includes the estimate for the real and complex parabola. For further examples we refer to Section \ref{subsection:ExamplesQuadraticManifolds}.

\subsection{Square function estimates for cones over quadratic manifolds}

In the next step we turn to square function estimates for the conical extension of these manifolds denoted by $\mathcal{C} M_Q$. Define the conical extension of $Q$ by
\begin{equation*}
\begin{split}
\mathcal{C} Q : B_d(0,1) \times [1/2,1] &\to \R^{n+1}, \\
 (\xi,h) &\mapsto (\xi, \frac{q_1(\xi)}{h}, \ldots, \frac{q_l(\xi)}{h}, h) = h ( \frac{\xi}{h}, q_1\big( \frac{\xi}{h} \big), \ldots, q_l \big( \frac{\xi}{h} \big), 1).
 \end{split}
\end{equation*}

$\mathcal{C} M_Q$ is given by the graph of $M_Q$:
\begin{equation}
\label{eq:ParametrizationConical}
\mathcal{C} M_Q = \{ h ( \frac{\xi}{h}, q_1\big( \frac{\xi}{h} \big), \ldots, q_l \big( \frac{\xi}{h} \big), 1) : \quad \xi \in B_d(0,1), \; h \in [1/2,1] \}.
\end{equation}

\smallskip

By the High-Low method, Guth--Wang--Zhang \cite{GuthWangZhang2020} proved a sharp square function estimate for the truncated half-cone, which after a linear transformation becomes the conical extension of the parabola:
\begin{equation*}
(\xi_1,\xi_2) \mapsto (\xi_1,\xi_2,\sqrt{\xi_1^2 + \xi_2^2}).
\end{equation*}
The square function estimate for a canonical covering, i.e., the conical extension of a canonical covering of the base curve $(\xi_1,\xi_2,1)$, $\| (\xi_1,\xi_2) \| = 1$, reads as follows for $\text{supp}(\hat{f}) \subseteq \mathcal{N}_{R^{-1}} (\mathcal{C} \mathbb{P}_2)$:
\begin{equation*}
\int_{\R^4} |f|^4 \lesssim_\varepsilon R^{\varepsilon} \int_{\R^4} \big( \sum_{\theta} |f_{\theta}|^2 \big)^2.
\end{equation*}
A square function estimate for the conical extension of the complex parabola has been shown in \cite{Schippa2023Complex}.

\smallskip

Guth--Wang--Zhang \cite{GuthWangZhang2020} used elegant considerations based on incidence geometry to show a new Kakeya estimate, which uses the high symmetry of the expressions involved. In \cite{Schippa2024} a more analytic and local proof was detailed to analyze conical extensions of degenerate curves and the conical extension of the complex parabola. In \cite{GuthMaldague2023} Guth--Maldague applied the High-Low method to obtain sharp square function estimates for the moment curves. Here we point out that the High-Low method extends to cones over higher-dimensional quadratic manifolds satisfying a transversality condition for tangent spaces at separated points. Furthermore, we surmise that the current approach can be combined with the arguments in \cite{GuthMaldague2023} to show sharp square function estimates for manifolds with higher-order tangential spaces satisfying transversality conditions.

\smallskip

For the conical extension of the more general quadratic manifolds considered above we extend the more analytic approach from \cite{Schippa2024}. This way we obtain an $L^4$-square function estimate provided that \eqref{eq:SpanCondition} holds. \eqref{eq:SpanCondition} also crucially comes into play for the key Kakeya estimate.

\begin{theorem}[Square function estimates for conical extensions of quadratic manifolds]
\label{thm:SquareFunctionConical}
Let $M_Q$ be given by \eqref{eq:ParametrizationQuadraticManifold} with Assumption \ref{ass:SpanCondition} holding true and let $\Gamma M_Q$ be given by \eqref{eq:ParametrizationConical}. Then for $f \in \mathcal{S}(\R^{n+1})$ with $\text{supp}(\hat{f}) \subseteq \mathcal{N}_{\delta}(\Gamma M_Q)$ it holds:
\begin{equation*}
\| f \|_{L^4(\R^{n+1})} \lesssim_\varepsilon \delta^{-\varepsilon} \big\| \big( \sum_{\theta \in \Theta_{\delta}} |f_{\theta}|^2 \big)^{\frac{1}{2}} \big\|_{L^4(\R^{n+1})}.
\end{equation*}
\end{theorem}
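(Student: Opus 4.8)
The plan is to run the High–Low method of Guth–Wang–Zhang, in the analytic form developed in Schippa2024, on the cone $\Gamma M_Q$, using Theorem~\ref{thm:SquareFunctionQuadraticManifold} as the "flat" input at each scale. First I would set up the standard machinery: fix the scale $R = \delta^{-1}$, and for each dyadic intermediate scale $1 \le K \le R$ decompose $f = \sum_\theta f_\theta$ according to the canonical caps, group the caps $\theta$ into the plates/sectors of the cone at scale $K$, and introduce the square functions $g_K = (\sum_\tau |f_\tau|^2)^{1/2}$ at the intermediate scales. The key dichotomy is to compare, pointwise and in $L^4$, the "high part" (where the local average of $|f|^2$ over balls of radius $\sim K$ in physical space is dominated by the fluctuations) with the "low part" (where it is controlled by the next coarser square function). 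Writing $|f|^2 = \sum_\tau |f_\tau|^2 + \sum_{\tau \ne \tau'} f_\tau \bar f_{\tau'}$, the diagonal term is exactly the square function we want; the off-diagonal term has Fourier support away from the origin at scale depending on the angular separation of $\tau,\tau'$, which is what makes the High–Low pigeonholing go through.

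The heart of the argument is the Kakeya/wave-envelope estimate for the plates of the cone $\Gamma M_Q$: one needs that a family of $\delta$-plates, dual to the $K$-scale sectors, has bounded overlap in a suitable $L^4$ (or wave-envelope) sense. This is precisely where Assumption~\ref{ass:SpanCondition} enters, via the transversality of tangent spaces at separated base points recorded in the excerpt: $|\mathbf{t}_1(\xi_1)\wedge\cdots\wedge\mathbf{t}_d(\xi_1)\wedge\mathbf{t}_1(\xi_2)\wedge\cdots\wedge\mathbf{t}_d(\xi_2)| \gtrsim_{|\xi_1-\xi_2|} 1$. Concretely, I would show that two plates associated to sectors whose base points are $\sim \rho$-separated intersect in a set of measure $\lesssim \delta^{?}\rho^{-?}$ (the exact exponents dictated by the $d+l+1$ dimensions), by reducing the intersection to the simultaneous solvability of the underlying system of quadratic equations $q_k(\xi_1) = q_k(\xi_2) + \ldots$ and invoking the biorthogonality / Jacobian lower bound that Assumption~\ref{ass:SpanCondition} provides. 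Summing the bounded-overlap estimate over the $\sim K^{d}$ sectors and over the $O(\log R)$ dyadic scales produces the loss $\delta^{-\varepsilon}$.

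Then I would assemble the pieces: at each scale $K$, the high part is bounded by the Kakeya/wave-envelope estimate by $C_\varepsilon K^{\varepsilon}\,\|(\sum_\theta |f_\theta|^2)^{1/2}\|_{L^4}^{1/?}$ times a factor of the coarser square function (a standard "broad/narrow" or "refined Córdoba" interpolation step, which is where the flat square function estimate of Theorem~\ref{thm:SquareFunctionQuadraticManifold} for the manifold $M_Q$ at a fixed height gets applied after rescaling the cone), and the low part is absorbed into the inductive hypothesis at scale $K$; iterating the recursion from $K=1$ to $K=R$ and summing the geometric-in-$\varepsilon$ series yields \eqref{eq:L4SquareFunctionQuadraticManifold}'s conical analogue with the stated $\delta^{-\varepsilon}$ loss. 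The main obstacle I anticipate is the Kakeya step: getting the plate-overlap estimate with a constant that is uniform in the scale $K$ and depends only on the transversality parameter $c$ (and the spectral radii of the $A_i$) — in particular, handling base points that are only polynomially separated, $|\xi_1-\xi_2| \sim K^{-1}$, where the transversality constant in the wedge bound degenerates and must be tracked carefully through the rescaling; a secondary technical point is that the cone's extra dilation variable $h\in[1/2,1]$ introduces an additional "radial" direction that must be shown not to spoil the transversality count, which I would handle by freezing $h$ and noting the radial direction is automatically transverse to all the tangent spaces.
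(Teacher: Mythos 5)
Your proposal correctly identifies the paper's architecture: a High--Low/wave-envelope argument for $\Gamma M_Q$, a Kakeya-type estimate for plate overlaps powered by the transversality of Assumption~\ref{ass:SpanCondition}, an induction on scales, and a flat square-function input at the smallest scale. Two points of execution are worth flagging because they are not cosmetic. The key Kakeya estimate (Proposition~\ref{prop:KakeyaEstimateConical}) is not proved by bounding pointwise plate intersections in physical space, but by the analytic Fourier-space argument you nod to when citing \cite{Schippa2024}: apply Plancherel to $\int \big|\sum_\theta |f_\theta|^2\big|^2$, decompose frequency space into dyadic shells $\Omega_\sigma$, and sort the Minkowski differences $\tilde\theta = \theta - \theta$ into centered planks $\Theta(\sigma,\eta)$ whose finite overlap is precisely what the transversality condition forces (Lemmas~\ref{lem:RepI}--\ref{lem:DoubleEndsOverlap}). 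A bilinear intersection-measure bound of the kind you describe appears only separately, in the induction base case Proposition~\ref{prop:InductionStart}, alongside a Bourgain--Guth broad/narrow decomposition; it is not the engine of Proposition~\ref{prop:KakeyaEstimateConical}. More importantly, the ``rescaling the cone'' you invoke to pass between scales is a specific generalized Lorentz transformation $\Lambda_{\tau,s}$, blowing up an $s$-sector of $\mathcal{C}\Gamma$ to essentially the whole cone while preserving the plank geometry and the $h\in[1/2,1]$ direction; this is what yields the two-scale inequality $S(r_1,r_3) \lesssim \log(r_2)\, S(r_1,r_2)\,\max_s S(sr_2,sr_3)$ of Lemma~\ref{lem:TwoScaleInequality}, and without exhibiting such a transformation for the general quadratic cone $\mathcal{C}\Gamma$ the induction you describe cannot close. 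That is the one genuinely load-bearing item your sketch leaves as a black box; the remainder can be filled in along the lines you outline, with the caveat that tracking how the transversality constant enters the plank-sorting lemmas (rather than a plate-intersection bound) is where the dependence on $c$ and the spectral radii of the $A_i$ actually lives.
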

For $M_Q$ the real parabola the above theorem recovers the result of Guth--Wang--Zhang \cite{GuthWangZhang2020} and for $M_Q$ the complex parabola the above recovers \cite[Theorem~1.3]{Schippa2024}.

\subsection{Local smoothing estimates for multi-parameter averages}

As an application, we show a smoothing estimate for multi-parameter averages with transversality assumptions in Section \ref{section:MultiParameterAverages}. Precisely, we consider
\begin{equation}
\label{eq:TwoParamLocalSmoothing}
\mathcal{A}^{(2)}_t f(x) = \int f(x- t_1 \gamma_1(s^{(1)}) - t_2 \gamma_2(s^{(2)})) \chi(s^{(1)}) \chi(s^{(2)}) ds^{(1)} ds^{(2)}
\end{equation}
with
\begin{equation*}
\gamma_1(s) = (s_1,s_2,s_1^2 - s_2^2), \quad \gamma_2(s) = (s_1,s_2,2s_1 s_2).
\end{equation*}

We show the following extension of the local smoothing for the wave equation in $2+1$-dimensions (see Section \ref{section:MultiParameterAverages} for further explanations), which describes a two parameter average corresponding to the complex paraboloid.

\begin{theorem}[Local smoothing estimates for multi-parameter averages]
\label{thm:MultiParameterSmoothing}
 Let $\mathcal{A}_t^{(2)}$ be given by \eqref{eq:TwoParamLocalSmoothing}. 
Then, the multi-parameter local smoothing estimate holds:
\begin{equation}
\label{eq:MultiParameterLocalSmoothing}
\| \chi_1(t) \mathcal{A}^{(2)}_{t} f \|_{L^4_{t,x}(\R^{5})} \lesssim_\varepsilon \| f \|_{L^4_{-2+\varepsilon}(\R^3)}.
\end{equation}
\end{theorem}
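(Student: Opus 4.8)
The plan is to deduce Theorem~\ref{thm:MultiParameterSmoothing} from the square function estimate for the conical extension of the complex parabola, namely Theorem~\ref{thm:SquareFunctionConical} applied to $M_Q$ the complex parabola $(s_1,s_2)\mapsto(s_1,s_2,s_1^2-s_2^2,2s_1s_2)$, whose conical extension $\Gamma M_Q$ sits in $\R^5$. First I would rewrite the two-parameter average \eqref{eq:TwoParamLocalSmoothing} on the Fourier side: $\widehat{\mathcal{A}^{(2)}_t f}(\xi) = \hat f(\xi)\, m(t_1\xi, t_2\xi)$ where $m(\eta^{(1)},\eta^{(2)}) = \widehat{\chi d\sigma_1}(\eta^{(1)})\,\widehat{\chi d\sigma_2}(\eta^{(2)})$ and $d\sigma_j$ are the measures on $\gamma_j$. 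The key geometric observation is that $\gamma_1$ and $\gamma_2$ together parametrize exactly the two components of the complex parabola, so the surface swept out in the frequency-times-dilation variables $(\xi,t_1,t_2)\in\R^3\times[1/2,1]^2$ is (up to harmless localizations) the conical extension $\Gamma M_Q$ with two dilation parameters $h_1=t_1$, $h_2=t_2$; this is the point of the phrase "two parameter average corresponding to the complex paraboloid."

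The main steps I would carry out: (i) localize $\hat f$ to the dyadic annulus $|\xi|\sim\lambda$ and $\chi_1(t)\mathcal{A}^{(2)}_t f$ to a ball of radius $\sim 1$, so that matters reduce to a fixed frequency scale $\lambda$ and $\delta = \lambda^{-1}$; (ii) use stationary phase / the decay of $\widehat{\chi d\sigma_j}$ to see that the symbol $m(t_1\xi,t_2\xi)$ is, up to an acceptable error and after the usual dyadic decomposition in each factor, supported within $\mathcal{N}_\delta(\Gamma M_Q)$ in the extended variables and is a bounded bump adapted to that neighborhood, with the $\lambda^{-1}$ decay from each of the two stationary-phase factors giving the desired $\lambda^{-2}$ (hence the $-2+\varepsilon$ derivative gain); (iii) invoke Theorem~\ref{thm:SquareFunctionConical} to bound the $L^4_{t,x}$-norm by the $L^4$-norm of the square function $\big(\sum_\theta |F_\theta|^2\big)^{1/2}$ over the canonical caps $\theta\in\Theta_\delta$ of $\Gamma M_Q$; (iv) control the square function by $\|f\|_{L^4}$-type quantities: since the caps $\theta$ correspond to $\delta^{1/2}$-arcs on the base surface, $F_\theta$ is essentially $f$ Fourier-restricted to a tube, and by an $L^4$ orthogonality / Córdoba-type argument together with the rapid decay of the pieces one sums the $\ell^2$ of the caps back up, losing only $\delta^{-\varepsilon}=\lambda^\varepsilon$; (v) sum the dyadic pieces in $\lambda$, which converges because of the $\lambda^{-2+\varepsilon}$ gain, yielding \eqref{eq:MultiParameterLocalSmoothing}.

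I expect the main obstacle to be step (ii)–(iii): carefully checking that the symbol of the two-parameter average, after all localizations, is genuinely a legitimate weight for the \emph{conical} square function on $\Gamma M_Q$, i.e.\ that the two independent dilation parameters $t_1,t_2$ both get absorbed into the single cone variable $h$ in a compatible way. The subtlety is that \eqref{eq:ParametrizationConical} uses one dilation parameter $h$ applied simultaneously to all the quadratic components $q_1,\dots,q_l$, whereas here $t_1$ dilates the $q_1$-component ($s_1^2-s_2^2$) and $t_2$ dilates the $q_2$-component ($2s_1s_2$) independently; I would need to verify that the relevant frequency surface is still contained in a bounded union of rescaled copies of $\mathcal{N}_\delta(\Gamma M_Q)$ (or argue that the Kakeya/transversality input of Theorem~\ref{thm:SquareFunctionConical} is robust enough to handle the genuinely two-parameter family), and that the resulting loss is at worst $\delta^{-\varepsilon}$. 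A secondary technical point is the passage from the conical square function over caps $\theta$ back to $\|f\|_{L^4_{-2+\varepsilon}}$, which requires the standard but slightly delicate interplay between the $\ell^2$-decoupling of the caps and the almost-orthogonality coming from the oscillatory factors $\widehat{\chi d\sigma_j}$; I would handle this exactly as in the $2+1$-dimensional wave case in \cite{GuthWangZhang2020}, adapting the bookkeeping to the two independent parameters.
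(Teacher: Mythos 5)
Your high-level plan is in the right spirit (FIO representation, conical square function estimate, conclusion via $L^4$-duality), but the pivotal geometric step is misidentified in a way that blocks the argument as written.

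You treat $t_1$ and $t_2$ as the dilation parameters of the cone ("$h_1=t_1$, $h_2=t_2$") and then worry about reconciling a two-parameter dilation with the single parameter $h$ in \eqref{eq:ParametrizationConical}. That worry is well founded, but the resolution is not to combine $t_1,t_2$ into a single $h$; rather, after applying stationary phase to each $s^{(j)}$-integral, the phase of the resulting FIO is
\begin{equation*}
\Phi(x,t;\xi) = \langle x,\xi\rangle + t_1 \frac{\langle A_1^{-1}\xi',\xi'\rangle}{4\xi_{d+1}} + t_2 \frac{\langle A_2^{-1}\xi',\xi'\rangle}{4\xi_{d+1}},
\end{equation*}
with $\xi=(\xi',\xi_{d+1})\in\R^{d+1}$ the frequency dual to $x$. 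Taking the Fourier transform in all variables $(x,t_1,t_2)$, the support lies on the graph $\xi\mapsto(\xi',\xi_{d+1}, \langle A_1^{-1}\xi',\xi'\rangle/4\xi_{d+1}, \langle A_2^{-1}\xi',\xi'\rangle/4\xi_{d+1})$, which \emph{is} the conical parametrization \eqref{eq:ParametrizationConical} with the single cone parameter $h=\xi_{d+1}$; the duals of $t_1,t_2$ supply the two extra quadratic components. So there is no genuinely two-parameter cone to handle: the structure is exactly the one Theorem~\ref{thm:SquareFunctionConical} treats, once you do the stationary-phase bookkeeping. This also reveals a second point you skip: the quadratic forms in the resulting graph are generated by the \emph{inverse} matrices $A_i^{-1}$, so the relevant transversality is the assumption \eqref{eq:TransversalityAssumptionAverages} on $A_i^{-1}$, which must be checked separately (for the complex parabola $A_1^{-1}=A_1$ and $A_2^{-1}=A_2$, so it holds, but the derivation is not automatic). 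The factor $N^{-2}$ does come from the $m=2$ stationary phase decays $|\xi|^{-d/2}$ with $d=2$, as you say.

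A second gap is your step (iv). Closing from the conical square function back to $\|f\|_{L^4}$ is not a C\'ordoba-type almost-orthogonality step: after the frequency projection to a cap $\theta$ one still has nontrivial propagation in $t_1,t_2$, and the paper concludes by a duality argument reducing matters to an $L^2$ bound for a maximal average operator $\sup_\theta |A_\theta g|$, in the style of Mockenhaupt--Seeger--Sogge. That maximal estimate is proved by a Sobolev-embedding reduction in the $\theta$-parameter followed by $TT^*$ kernel bounds, and it is where the specific form of the two-parameter average enters; merely invoking cap orthogonality as you propose would not absorb the $t$-integration. Your citation of \cite{GuthWangZhang2020} points in the right direction, but you should state explicitly that the mechanism is a maximal function estimate rather than cap orthogonality.
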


\medskip

\emph{Outline of the paper.}
In Section \ref{section:Preliminaries} we introduce the canonical coverings of $\delta$-neighborhoods of quadratic manifolds and their conical extensions. In Section \ref{section:SquareFunctionQuadraticManifolds} we show Theorem \ref{thm:SquareFunctionQuadraticManifold} relying on biorthogonality and give examples for quadratic manifolds satisfying the transversality assumption. Next, in Section \ref{section:SquareFunctionConical} we show  Theorem \ref{thm:SquareFunctionConical} by wave envelope estimates. Finally, in Section \ref{section:MultiParameterAverages} we give an application of Theorem \ref{thm:SquareFunctionConical} and show the multi-parameter smoothing estimates for the complex paraboloid stated in Theorem \ref{thm:MultiParameterSmoothing}.

\smallskip

\textbf{Basic notation:}

\begin{itemize}
\item For $x,y \in \R^d$ $\langle x, y \rangle = \sum_{i=1}^d x_i y_i$ denotes the Euclidean scalar product and $\| x \| = \langle x, x \rangle^{\frac{1}{2}}$ the Euclidean norm. For $A \subseteq \R^n$, $x \in \R^n$ we denote with $\text{dist}(x,A)= \inf_{y \in A} \| x - y \|$ the distance to the set $A$.
\item $B_d(x,R) = \{ y \in \R^d : \| x - y \| < R \}$ denotes the ball with radius $R$.
\item $\text{Sym}(\R^d)$ denotes the matrices $A \in \R^{d \times d}$ with $A = A^t$.
\item $\mathcal{S}(\R^d)$ denotes the Schwartz functions, i.e., smooth functions, which together with their derivatives decay faster than any polynomial.
\item The $\delta$-neighborhood of $M$, $M \subseteq \R^n$ is denoted with $\mathcal{N}_{\delta}(M) = \{ x \in \R^n : \text{dist}(x,M) < \delta \}$.
\end{itemize}

\section{Preliminaries}
\label{section:Preliminaries}

In the following we introduce the ``canonical" coverings of the $\delta$-neighborhood of $M_Q$ as given in \eqref{eq:ParametrizationQuadraticManifold} and the conical extension \eqref{eq:ParametrizationConical}. Let $\delta = r^{-2}$. By canonical we refer to isotropic rectangles of size $r^{-1}$, on which the $r^{-2}$-neighborhood of a quadratic manifold is essentially flat.

\smallskip

We denote the $r^{-1} \sigma^{-1}$-lattice of base frequencies by $\mathcal{R}(r^{-2} \sigma^{-2}) = B_d(0,1) \cap r^{-1} \sigma^{-1} \Z^d$. The argument of $\mathcal{R}$ describes the thickness of the neighborhood, which is linearized by choosing the base frequencies on the lattice.

 We first describe the $\delta$-neighborhood of $\Gamma_Q(\xi) = (\xi,q_1(\xi),\ldots,q_l(\xi))$. Define the  tangential vectors as
\begin{equation}
\label{eq:TangentialVectors}
\mathbf{t}_i(\eta) = \partial_i \Gamma_Q(\eta) = (\underbrace{0,\ldots,0,1,0,\ldots,0}_{d \text{ components}},\underbrace{\partial_i q_1(\eta), \ldots, \partial_i q_l(\eta)}_{l \text{ components}}).
\end{equation}
The normal vectors $\mathfrak{n}_i(\eta)$ form a basis of $(T_\eta \Gamma_Q)^{\perp}$. A choice of essentially normalized vectors is given by
\begin{equation}
\label{eq:NormalVectors}
\begin{split}
\mathfrak{n}_1(\eta) &= (\underbrace{-\partial_1 q_1(\eta),\ldots,-\partial_d q_1(\eta)}_{d \text{ components}}, \underbrace{ 1, 0, \ldots,0 }_{l \text{ components}} ), \\
\mathfrak{n}_2(\eta) &= (-\partial_1 q_2(\eta), \ldots, - \partial_d q_2(\eta), 0 , 1, 0 , \ldots, 0), \\
&\vdots \\
\mathfrak{n}_l(\eta) &= (-\partial_1 q_l(\eta), \ldots, - \partial_d q_l(\eta), 0, \ldots, 0, 1).
\end{split}
\end{equation}

\smallskip

For $\eta \in \mathcal{R}(r^{-2})$ we define the $\underbrace{r^{-1} \times \ldots \times r^{-1}}_{d \text{ components}} \times \underbrace{r^{-2} \times \ldots \times r^{-2}}_{n-d=l \text{ components}}$-box\footnote{Dependence on $r$ is suppressed as this will be clear from the context.}:
\begin{equation*}
\begin{split}
\theta(\eta) &= \{ \Gamma_Q(\eta) + a_1 \mathbf{t}_1 (\eta) + \ldots + a_d \mathbf{t}_d(\eta) + b_1 \mathfrak{n}_1(\eta) + \ldots + b_l \mathfrak{n}_l(\eta) : \\
&\quad \quad |a_i| \leq D r^{-1}, \; |b_i| \leq D r^{-2} \}.
\end{split}
\end{equation*}
Let
\begin{equation*}
\Theta_{r^{-2}} = \{ \theta(\eta) : \eta \in \mathcal{R}(r^{-2}) \}
\end{equation*}
denote the collection of boxes canonically covering the $r^{-2}$-neighborhood.

\smallskip

We turn to the conical extension, i.e., the manifold given by the image of
\begin{equation*}
\begin{split}
\Gamma_c(\xi,h) &= (\xi, \frac{q_1(\xi)}{h}, \ldots, \frac{q_l(\xi)}{h}, h) = h \cdot (\frac{\xi}{h}, q_1(\frac{\xi}{h}), \ldots, q_l(\frac{\xi}{h}), 1), \\
&\quad \xi \in B_d(0,1), \; h \in [1/2,1].
\end{split}
\end{equation*}
We define the central line as $\mathbf{c}(\xi) = (\xi,Q(\xi),1)$. Overloading notation and with $\mathbf{t}_i$ and $\mathfrak{n}_i$ given like above, using the trivial embedding $\R^n \hookrightarrow \R^{n+1}, \; \xi \mapsto (\xi,0)$, we define the slabs canonically covering $\mathcal{N}_{r^{-2}}(\Gamma_c)$:
\begin{equation*}
\begin{split}
\theta_{\eta,r^{-2}} &= \{ a \mathbf{c}(\eta) + b_1 \mathbf{t}_1(\eta) + \ldots + b_d \mathbf{t}_d(\eta) + c_1 \mathfrak{n}_1(\eta) + \ldots + c_l \mathfrak{n}_l(\eta) : \\
&\quad a \in [1/2,1], \; |b_i| \leq D r^{-1}, \; |c_i| \leq D r^{-2} \} \subseteq \R^{n+1}
\end{split}
\end{equation*}
with $D \geq 1$ denoting a suitable dimensional constant.
Like above let
\begin{equation*}
\Theta_{r^{-2}} = \{ \theta_{\eta, r^{-2}} : \eta \in \mathcal{R}(r^{-2}) \}
\end{equation*}
denote the collection of $\theta$ canoncially covering the $r^{-2}$-neighborhood.

\section{Square function estimates for quadratic manifolds}
\label{section:SquareFunctionQuadraticManifolds}

This section is devoted to the proof of Theorem \ref{thm:SquareFunctionQuadraticManifold}. Secondly, we give examples.

\subsection{Proof of Theorem \ref{thm:SquareFunctionQuadraticManifold}}

As a preliminary recall the following double-mean value theorem, which is immediate from the fundamental theorem of calculus:
\begin{lemma}[Double-mean~value~theorem]
\label{lem:DMVT}
Let $m \in C^2(\R^n;\R)$ and $\xi_1 + \xi_3 = \xi_2 + \xi_4$ for $\xi_i \in \R^n$, $i=1,\ldots,4$. Then it holds
\begin{equation*}
m(\xi_1) + m(\xi_3) - m(\xi_2) - m(\xi_4) = \int_0^1 \int_0^1 \langle \xi_{12}, \partial^2 m(-\xi_1 + \xi_2 + \xi_4 + s \xi_{12} + t \xi_{14}) \xi_{14} \rangle ds dt
\end{equation*}
with $\xi_{ij} = \xi_i - \xi_j$.
\end{lemma}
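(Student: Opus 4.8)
The plan is to reduce the identity to two successive applications of the fundamental theorem of calculus along a two-parameter affine family of points. First I would set
\[
g(s,t) = m\big( \xi_3 + s\,\xi_{12} + t\,\xi_{14} \big), \qquad (s,t) \in [0,1]^2 ,
\]
and note that the coplanarity hypothesis $\xi_1 + \xi_3 = \xi_2 + \xi_4$ gives $-\xi_1 + \xi_2 + \xi_4 = \xi_3$, so that the point $\xi_3 + s\xi_{12} + t\xi_{14}$ is precisely the argument of $\partial^2 m$ written in the statement. The next step is to identify the four vertices: a one-line computation with the constraint (e.g.\ $\xi_1 - \xi_2 = \xi_4 - \xi_3$ forces $\xi_3 + \xi_{12} = \xi_4$, and similarly $\xi_3 + \xi_{14} = \xi_2$ and $\xi_3 + \xi_{12} + \xi_{14} = \xi_1$) shows
\[
g(0,0) = m(\xi_3), \quad g(1,0) = m(\xi_4), \quad g(0,1) = m(\xi_2), \quad g(1,1) = m(\xi_1).
\]

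Next I would differentiate. Since $m \in C^2(\R^n;\R)$, the chain rule gives $g \in C^2([0,1]^2)$ with
\[
\partial_t g(s,t) = \langle \nabla m(\xi_3 + s\xi_{12} + t\xi_{14}),\, \xi_{14}\rangle, \qquad \partial_s\partial_t g(s,t) = \langle \xi_{12},\, \partial^2 m(\xi_3 + s\xi_{12} + t\xi_{14})\,\xi_{14}\rangle ,
\]
where symmetry of the Hessian lets me order the bilinear form as displayed in the lemma. Then integrating $\partial_s\partial_t g$ over $[0,1]^2$ — legitimate because the integrand is continuous, so the iterated integral is independent of the order — and applying the fundamental theorem of calculus once in $s$ and once in $t$ yields
\[
\int_0^1\!\!\int_0^1 \partial_s\partial_t g(s,t)\,ds\,dt = g(1,1) - g(1,0) - g(0,1) + g(0,0) = m(\xi_1) + m(\xi_3) - m(\xi_2) - m(\xi_4),
\]
which is the claimed formula after substituting the expression for $\partial_s\partial_t g$ back into the left-hand side.

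There is no real obstacle: the argument is entirely elementary. The only point requiring a bit of care is the bookkeeping that matches the four corners of the affine parallelogram $\{\xi_3 + s\xi_{12} + t\xi_{14} : s,t \in [0,1]\}$ with $\xi_1,\xi_2,\xi_3,\xi_4$, which is exactly where the hypothesis $\xi_1 + \xi_3 = \xi_2 + \xi_4$ is used; without it the corner values would not collapse to the desired left-hand side. Continuity of $\partial^2 m$ is what makes the double integral well defined and the interchange of the two integrations harmless.
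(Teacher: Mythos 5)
Your proof is correct, and it is exactly the argument the paper has in mind: the lemma is stated there as "immediate from the fundamental theorem of calculus," and your two-parameter function $g(s,t)=m(\xi_3+s\xi_{12}+t\xi_{14})$ with the corner identification via $\xi_1+\xi_3=\xi_2+\xi_4$ is the standard way to make that precise. The bookkeeping of the vertices and the use of the Hessian's symmetry are both verified correctly.
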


We turn to the proof of Theorem \ref{thm:SquareFunctionQuadraticManifold}:
\begin{proof}[Proof~of~Theorem~\ref{thm:SquareFunctionQuadraticManifold}]
Write $f = \sum_{\theta \in \Theta_{\delta}} f_{\theta}$ with $f_{\theta}$ denoting a smoothed version of Fourier projection to $\theta$ such that $\hat{f}_{\theta} = \chi_{\theta} \hat{f}$. Here we require that the $(\chi_{\theta})_{\theta \in \Theta_{\delta}} \subseteq C^\infty_c(\R^n)$ comprise a smooth partition of unity of $\mathcal{N}_{\delta}(M_Q)$: $\sum_{\theta \in \Theta_{\delta}} \chi_{\theta} = 1$.

This allows us to write
\begin{equation*}
\int_{\R^n} |f|^4 = \int_{\R^n} \big( \sum_{\theta \in \Theta_{\delta}} f_{\theta} \big)^2 \big( \sum_{\theta \in \Theta_{\delta}} \overline{f}_{\theta} \big)^2.
\end{equation*}
By an application of Plancherel's theorem we find that only $\theta_1,\ldots,\theta_4$ are contributing if there is a solution to the system of equations with $\xi_i \in \pi_d(\theta_i)$, $\xi_i$ a base frequency for $\theta_i$:
\begin{equation*}
\left\{ \begin{array}{cl}
\xi_1 + \xi_3 &= \xi_2 + \xi_4, \\
q_1(\xi_1) + q_1(\xi_3) &= q_1(\xi_2) + q_1(\xi_4) + \mathcal{O}(\delta), \\
&\vdots \\
q_l(\xi_1) + q_l(\xi_3) &= q_l(\xi_2) + q_l(\xi_4) + \mathcal{O}(\delta).
\end{array} \right.
\end{equation*}
We aim to prove that $\{ \theta_1, \theta_3 \} = \{ \theta_2, \theta_4 \}$ or up to finite translation, i.e.
\begin{equation}
\label{eq:EssentialBiorthogonality}
\theta_1 - \theta_2 \subseteq B(0, C \delta^{\frac{1}{2}}) \vee \theta_1 - \theta_4 \subseteq B(0, C \delta^{\frac{1}{2}}).
\end{equation}

We apply Lemma \ref{lem:DMVT} on the quadratic equations to find the conditions
\begin{equation*}
\langle \xi_{12} , A_m \xi_{14} \rangle = \mathcal{O}(\delta)
\end{equation*}
for $m=1,\ldots,l$ with $A_m$ denoting the generators of the quadratic forms $\xi \mapsto \frac{1}{2} \langle \xi, A_m \xi \rangle$ and $\xi_{ij} = \xi_i - \xi_j$.

We argue by contradiction and suppose that $\min(|\xi_{12}|, |\xi_{14}|) \gg \delta^{\frac{1}{2}}$. So, in this case we divide by the modulus of $\xi_{12}$ and $\xi_{14}$ to find the conditions
\begin{equation}
\label{eq:QuantitativeSpanCondition}
| \langle \xi'_{12}, A_m \xi'_{14} \rangle | \ll 1, \quad \xi'_{ij} = \xi_{ij} / \| \xi_{ij} \|.
\end{equation}
By the transversality condition \eqref{eq:SpanCondition} we can find for $\xi'_{14} \in \mathbb{S}^{d-1}$ a subsequence of $A_m$ such that $A_{i_1} \xi'_{14}, \ldots, A_{i_d} \xi'_{14}$ has maximal rank.

Consequently, we can write
\begin{equation*}
\xi'_{12} = \sum_{m=1}^d a_m A_{i_m} \xi'_{14}
\end{equation*}
for $|a_m| \lesssim 1$. More precisely, we obtain a bound on $a_m$ by writing
\begin{equation*}
\begin{split}
|a_1 \det (A_{i_1} \xi_{12}', A_{i_2} \xi_{14}', \ldots, A_{i_d} \xi_d') | &= |\det( \sum_{m=1}^d a_m A_{i_m} \xi_{14}', A_{i_2} \xi_{14}', \ldots, A_{i_d} \xi_{14}')|  \\
 &\leq |\xi_{12}'| |A_{i_2} \xi_{14}'| \ldots |A_{i_d} \xi_{14}'|.
\end{split}
\end{equation*}
This implies
\begin{equation*}
|a_1| \leq \frac{|\xi_{12}'| |A_{i_2} \xi_{14}'| \ldots |A_{i_d} \xi_{14}'|}{|\det(A_{i_1} \xi_{14}', \ldots,A_{i_d} \xi_{14}')|},
\end{equation*}
and more generally,
\begin{equation*}
|a_j| \leq \frac{|\xi_{12'}| \prod_{\substack{k=1, k \neq j}}^d |A_{i_k} \xi'_{14} |} {|\det(A_{i_1} \xi_{14}', \ldots,A_{i_d} \xi_{14}')|}.
\end{equation*}
The estimate depends on bounds for the eigenvalues of $A_{i_j}$ as reflected in the numerator and the joint transversality of $A_{i_j} \xi_{14}'$, as reflected in the denominator.

Then \eqref{eq:QuantitativeSpanCondition} implies
\begin{equation*}
1 = |\langle \xi'_{12}, \xi'_{12} \rangle| = |\sum_{m=1}^d \langle \xi'_{12}, a_m A_{i_m} \xi'_{14} \rangle| \leq \sum_{m=1}^d |a_m|  | \langle \xi'_{12}, A_{i_m} \xi'_{14} \rangle| \ll 1,
\end{equation*}
which is a contradiction. This completes the proof of \eqref{eq:EssentialBiorthogonality}, from which \eqref{eq:L4SquareFunctionQuadraticManifold} follows by applying the Cauchy-Schwarz inequality.
\end{proof}

\subsection{Examples}
\label{subsection:ExamplesQuadraticManifolds}
First, we note that the scalar case $d=l=1$ with $A_1 = 1 \in \R$ recovers the classical C\'ordoba--Fefferman square function estimate. In the complex case we have
\begin{equation*}
A_1 = 
\begin{pmatrix}
1 & 0 \\
0 & -1
\end{pmatrix}, \quad A_2 = 
\begin{pmatrix}
0 & 1 \\
1 & 0
\end{pmatrix}
.
\end{equation*}
By a change of basis, we can equivalently consider
\begin{equation*}
B_1 = 
\begin{pmatrix}
1 & 0 \\
0 & 1
\end{pmatrix}, \quad B_2 = 
\begin{pmatrix}
0 & 1 \\
-1 & 0
\end{pmatrix}
.
\end{equation*}
$B_2$ describes a rotation by an angle of $\pi / 2$, for which reason
\begin{equation*}
\forall \nu \in \mathbb{S}^1: \, |\det (B_1 \nu, B_2 \nu)| = 1.
\end{equation*}
This shows that Assumption \ref{ass:SpanCondition} is satisfied such that Theorem \ref{thm:SquareFunctionQuadraticManifold} recovers the $L^4$-square function estimate for the complex parabola. 

We turn to higher dimensions. Initially, one might guess that for $d=3$, $l=3$ will suffice to satisfy Assumption \ref{ass:SpanCondition}. This is not true:
Indeed, for any symmetric $A_i \in \R^{3 \times 3}$ we have
\begin{equation*}
\det (A_1 \nu, A_2 \nu, A_3 \nu) = 0
\end{equation*}
for some $\nu \in \mathbb{S}^{2}$. If $A_1$ has a non-trivial kernel, then simply choose $\nu \in \text{ker}(A_1)$. If $A_1$ is invertible, then we have
\begin{equation*}
\det(A_1 \nu, A_2 \nu, A_3 \nu) = |A_1| \cdot \det(\nu, A_1^{-1} A_2 \nu, A_1^{-1} A_3 \nu).
\end{equation*}
With $A_1^{-1} A_2 \in \R^{3 \times 3}$ it must have a real eigenpair. Now choose $\nu$ to be an eigenvector of $A_1^{-1} A_2$. Consequently, the above determinant vanishes.

This shows that in three and higher dimensions, we need more quadratic conditions. A possible choice are reflections and inversions: Consider the matrices $A^{(ij)} \in \R^{d \times d}$, $i \neq j$:
\begin{equation*}
(A^{(ij)} v)_k =  \begin{cases}
v_j, \quad &k=i, \\
v_i, \quad &k=j, \\
v_k, \quad &\text{else},
\end{cases}
\end{equation*}
and $A^{(i)} \in \R^{d \times d}$, $i \in \{1,\ldots,d\}$:
\begin{equation*}
(A^{(i)} v)_k = \begin{cases}
-v_k, \quad &k =i, \\
v_k, \quad &\text{else}.
\end{cases}.
\end{equation*}
Then considering the matrices $\{ I_d, A^{(ij)}, A^{(i)} \}$ we find the transversality assumption \eqref{eq:SpanCondition} to hold by elementary geometric considerations.

\section{Square function estimates for the conical extension}
\label{section:SquareFunctionConical}

In this section we prove Theorem \ref{thm:SquareFunctionConical}. Here we extend the approach of Guth--Wang--Zhang \cite{GuthWangZhang2020} to cones over quadratic manifolds, which satisfy Assumption \ref{ass:SpanCondition}. 
The proof consists of two steps. In the first step we establish a wave envelope estimate, which is based on the key Kakeya estimate. Introduce notations for $\tau \in \Theta_{s^{-2}}$, $s \leq r$:
\begin{equation}
\label{eq:PolarSetKakeyaSorting}
U_{\tau,r^{2}} = \text{conv} \big( \bigcup_{\substack{\theta \in \Theta_{r^{-2}}, \\ \theta \subseteq 10 \tau}} \theta^* \big), \quad S_U f  = \big( \sum_{\theta \subseteq 10 \tau} |f_{\theta}|^2 \big)^{\frac{1}{2}} \big\vert_U.
\end{equation}
Below we tile $\R^{n+1}$ with finitely overlapping translates of $U_{\tau,r^2}$, denoted with $U \parallel U_{\tau,r^2}$.
The Kakeya estimate reads as follows:
\begin{proposition}
\label{prop:KakeyaEstimateConical}
Let $r \gg 1$. The following estimate holds:
\begin{equation*}
\int_{\R^{n+1}} \big| \sum_{\theta \in \Theta_{r^{-2}}} |f_{\theta}|^2 \big|^2 \lesssim \sum_{r^{-1} \leq s \leq 1} \sum_{\tau \in \Theta_{s^2}} \sum_{U \parallel U_{\tau,r^2}} |U|^{-1} \| S_U f \|^4_{L^2(U)}.
\end{equation*}
\end{proposition}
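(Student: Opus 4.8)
The plan is to run the High-Low argument of Guth--Wang--Zhang \cite{GuthWangZhang2020}, adapted via the more analytic and local approach of \cite{Schippa2024}, decomposing the ``square function'' $g = \sum_{\theta \in \Theta_{r^{-2}}} |f_\theta|^2$ into contributions localized at dyadic spatial scales. First I would set $g_\theta = |f_\theta|^2$, whose Fourier support lies in $\theta - \theta$, an $O(r^{-1}) \times \cdots \times O(r^{-1}) \times O(r^{-2}) \times \cdots$ box through the origin whose long directions are the tangential directions $\mathbf{t}_i(\eta)$ and the central direction $\mathbf{c}(\eta)$ at the relevant base frequency. The key geometric input is that, after the conical dilation, the union $\bigcup_{\theta \subseteq 10\tau} (\theta - \theta)$ over $\theta$ inside a scale-$s^{-2}$ cap $\tau$ is comparable to (the polar-type dual region associated with) $U_{\tau, r^2}$, so a Littlewood--Paley decomposition in frequency at scales $s^{-1}$, $r^{-1} \le s \le 1$, naturally produces the sum over $\tau \in \Theta_{s^2}$ on the right-hand side. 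For each dyadic scale $s$ I would let $g_{<s}$, $g_s$ denote the low and annular pieces of $g$ at frequency scale $s^{-1}$, write $g = g_{<r^{-1}} + \sum_{r^{-1} \le s \le 1} g_s$ telescopically, and estimate $\|g\|_{L^2}^2$ by expanding $\|g\|_2^2 = \langle g, \sum_s g_s + g_{<r^{-1}}\rangle$ and applying Cauchy--Schwarz.

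The core of the argument is the pointwise/local statement that on each translate $U \parallel U_{\tau, r^2}$, the annular piece of $g$ at scale $s$ is controlled by the localized square function $S_U f$: concretely, that $|g_s|$ on $U$ is essentially $\le \sum_{\tau \in \Theta_{s^2}} (\text{average of } S_U f^2 \text{ over } U)$ up to the $L^1$-normalization $|U|^{-1}$, because the frequency annulus at scale $s^{-1}$ forces a grouping of the $\theta$'s into the $\tau \in \Theta_{s^2}$ caps, and the spatial uncertainty at that frequency is the dual box $U_{\tau,r^2}$. Summing over the finitely overlapping tiles $U$ and over the dyadic scales $s$, and using Cauchy--Schwarz in $s$ together with the telescoping/orthogonality so that the number of scales only costs a constant (here is where one either absorbs a $\log r$ or runs the wave-envelope bookkeeping to avoid it), yields the claimed bound with no $\varepsilon$-loss. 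The low piece $g_{<r^{-1}}$ is handled by the trivial scale $s \sim r^{-1}$ (a single cap $\tau$) or by local constancy of $f_\theta$ on $r^2$-balls.

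The main obstacle I anticipate is the geometry underlying the right-hand side: one must verify that, after the conical scaling $\xi \mapsto \xi/h$, the tangent-space transversality from Assumption \ref{ass:SpanCondition} (equivalently the wedge-product lower bound stated after the Assumption) guarantees that the dual slabs $\theta^*$ for $\theta \subseteq 10\tau$ genuinely nest into a single convex ``wave envelope'' $U_{\tau,r^2}$ with controlled overlap, and that these tile $\R^{n+1}$ with bounded multiplicity — this is exactly the point where \eqref{eq:SpanCondition} enters the Kakeya estimate, since without transversality the $\theta - \theta$ boxes at different base points could align and destroy the orthogonality used to pass from $\|g_s\|$ on $U$ to $\|S_U f\|_{L^2(U)}$. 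A secondary technical point is making the High-Low telescoping lossless (rather than merely up to $\delta^{-\varepsilon}$): this requires the careful summation over $s$ exploiting that consecutive annular pieces live at separated frequency scales, so Cauchy--Schwarz in $s$ is paid for by the square-summability rather than the count of scales.
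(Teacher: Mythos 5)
Your overall skeleton is the same as the paper's: pass to Fourier space by Plancherel, decompose the Fourier support of $g=\sum_\theta|f_\theta|^2$ into dyadic regions indexed by a scale $\sigma$, sort the difference sets $\tilde\theta=\theta-\theta$ into coarser planks associated with $\tau\in\Theta_{s^2}$, use finite overlap to apply Cauchy--Schwarz, undo Plancherel, and finish with local constancy of $S_Uf$ on the wave envelopes $U\parallel U_{\tau,r^2}$. Two remarks on the set-up before the main point. First, your worry about a $\log r$ loss from the number of scales is moot: the right-hand side of the proposition already carries a sum over $s$, and in the paper the regions are genuinely disjoint ($\Omega_{\sigma}=\bigcup\mathbf{CP}_{\sigma}\setminus\bigcup\mathbf{CP}_{\sigma/2}$), so the decomposition of $\int|\hat g|^2$ is an exact identity, not a telescoping estimate. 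Second, the decomposition cannot be an isotropic Littlewood--Paley annular decomposition at scales $s^{-1}$; it must be the anisotropic, cone-adapted one built from the centered planks $\Theta(\sigma,\eta)$ (long $\sim\sigma^2$ in the conical direction $\mathbf{c}(\eta)$, $\sim r^{-1}\sigma$ tangentially, $\sim r^{-2}$ normally), because the relevant ``scale'' of a point $\omega$ in the difference set is its position relative to this family, not $|\omega|$.

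The genuine gap is that the heart of the proposition --- the bounded-multiplicity (Kakeya) input --- is flagged as ``the main obstacle I anticipate'' but never established, and everything else in your outline is soft. Concretely, two statements are needed and neither is supplied. (i) A sorting lemma: if $\omega\in\tilde\theta(\xi)\cap\Omega_\sigma$ then $\omega\in 10\,\Theta(\sigma,\eta)$ for some $\eta$ with $|\xi-\eta|\lesssim r^{-1}\sigma^{-1}$. This is not automatic: a priori $\omega$ has tangential coefficients only of size $O(r^{-1})$ in the frame at $\xi$, and one must Taylor-expand the frames and use the transversality \eqref{eq:SpanCondition} (via the nondegeneracy of $(A_{i_1}\Delta\xi,\dots,A_{i_d}\Delta\xi)$) to force $|\Delta\xi|\lesssim r^{-1}\sigma^{-1}$ and hence tangential size $O(r^{-1}\sigma)$. (ii) Finite overlap of the planks $10\,\Theta(\sigma,\eta)$ on $\Omega_\sigma$. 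For heights $|h|\sim\sigma^2$ this follows from the planks forming a canonical covering of an $r^{-1}\sigma$-neighborhood of the dilated base manifold, but for $|h|\ll\sigma^2$ the planks at a fixed height overlap heavily in the bulk, and the argument must exploit that $\Omega_\sigma$ meets each plank only in its ``ends'' where some tangential coefficient has size $\sim r^{-1}\sigma$, together with a sign-flip lemma (two planks sharing a point of $\Omega_\sigma$ through ends of the same sign must have base points within $O(r^{-1}\sigma^{-1})$). Both steps are exactly where Assumption \ref{ass:SpanCondition} enters, and without them the passage from $|g|^2$ on $\Omega_\sigma$ to $\sum_\tau|\sum_{\theta\subseteq 10\tau}|f_\theta|^2|^2$ loses an unbounded factor. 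As written, the proposal is a correct plan with the decisive lemmas left unproved.
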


This gives rise to a two-scale wave envelope estimate, which will facilitate the induction-on-scales to show the claimed square function estimate.

\medskip

For the key Kakeya estimate we need to understand the Minkowski sum $\tilde{\theta}(\xi) = \theta(\xi) - \theta(\xi)$. Recall that the tangential and normal vectors of $\Gamma_Q(\xi) = (\xi,Q(\xi))$ are defined in \eqref{eq:TangentialVectors} and \eqref{eq:NormalVectors} and are presently regarded as vectors in $\R^{n+1}$.

\smallskip

 To analyze the overlap of $\tilde{\theta}(\eta)$, for $\sigma \in [r^{-1},1] \cap 2^{\Z}$, $\eta \in \mathcal{R}(r^{-2} \sigma^{-2})$ define the centered plank
\begin{equation*}
\begin{split}
\Theta(\sigma,\eta) &= \{ a \mathbf{c}(\eta) + b_1 \mathbf{t}_1(\eta) + \ldots + b_d \mathbf{t}_d(\eta) + c_1 \mathbf{n}_1(\eta) + \ldots + c_l \mathbf{n}_l(\eta) : \\ 
&\qquad -\sigma^2 \leq a \leq \sigma^2, \; b_i,c_i \in \R, \; |b_i| \leq E r^{-1} \sigma, \; |c_i| \leq E r^{-2} \}.
\end{split}
\end{equation*}

Note that the union forms a canonical covering of the $r^{-1} \sigma$-neighborhood of $\mathcal{C} \Gamma$ for $|a| \sim \sigma^2$, i.e., for the height $\{|\omega_{n+1}| \sim \sigma^2 \}$. The collection of centered planks is denoted by
\begin{equation*}
\mathbf{CP}_{\sigma} = \{ \Theta(\sigma,\eta) : \eta \in \mathcal{R}(r^{-2} \sigma^{-2}) \}
\end{equation*}
and for $\sigma \in [r^{-1},1] \cap 2^{\Z}$ we define
\begin{equation*}
\Omega_{\sigma} = \bigcup \mathbf{CP}_{\sigma} \backslash \bigcup \mathbf{CP}_{\sigma/2}, \text{ and } \Omega_{r^{-1}} = \bigcup \mathbf{CP}_{r^{-1}}.
\end{equation*}
Here we denote the collection of subelements of a set $A$ as
\begin{equation*}
\bigcup A = \{ x : \; \exists y \in A: x \in y \}.
\end{equation*}

\subsection{Kakeya estimates for cones over quadratic manifolds}

The first step in the proof of Proposition \ref{prop:KakeyaEstimateConical} is to invoke Plancherel's theorem:
\begin{equation*}
\int_{\R^{n+1}} \big| \sum_{\theta \in \Theta_{r^{-2}}} |f_{\theta}|^2 \big|^2 = \int_{\R^{n+1}} \big| \sum_{\theta \in \Theta_{r^{-2}}} \widehat{|f_{\theta}|^2}(\omega) \big|^2 d \omega.
\end{equation*}
To analyze the overlap of $\tilde{\theta}$, i.e., the Fourier support of $|f_{\theta}|^2$, we sort them into centered planks subordinate to the partition induced by $\Omega_{\sigma}$.

Indeed, note that the Fourier support of $\sum_{\theta \in \Theta_{r^{-2}}} |f_{\theta}|^2$ is contained in
\begin{equation*}
\bigcup \mathbf{CP}_1 \supseteq \bigcup_{\xi \in \mathcal{R}(r^{-2})} \tilde{\theta}_{\xi}.
\end{equation*}
Secondly, note that
\begin{equation*}
\bigcup \mathbf{CP}_1 = \Omega_1 \cup \Omega_{1/2} \cup \ldots \cup \Omega_{r^{-1}}
\end{equation*}
which allows us to write
\begin{equation}
\label{eq:DecompositionSFFourierSpace}
\int_{\R^{n+1}} \big| \sum_{\theta \in \Theta_{r^{-2}}} \widehat{|f_{\theta}|^2}(\omega) \big|^2 d \omega = \sum_{r^{-1} \leq \sigma \leq 1} \int_{\Omega_{\sigma}} \big| \sum_{\theta \in \Theta_{r^{-2}}} \widehat{|f_{\theta}|^2}(\omega) \big|^2.
\end{equation}

We turn to sorting $\tilde{\theta}$ for $\theta \in \Theta_{r^{-2}}$ into centered planks.


\smallskip

Note that $\omega \in \tilde{\theta}(\eta)$ admits the representation
\begin{equation*}
\omega = a \mathbf{c}(\eta) + \sum_{i=1}^d b_i \mathbf{t}_i(\eta) + \sum_{j=1}^l c_j \mathbf{n}_j(\eta),
\end{equation*}
with $|b_i| \leq D r^{-1}$, $|c_j| \leq D r^{-2}$, $|a| \leq 1$.

Note that the modulus of the last component of $\omega$, $\omega_{n+1}$, is given by $|a|$. Let $\pi_{\leq n}: \R^{n+1} \to \R^n$ denote the projection to the first $n$ components.
For $\omega \in \tilde{\theta}$ we obtain
\begin{equation*}
\pi_{\leq n}(\omega) = h \Gamma_Q(\eta) + \sum_{i=1}^d b_i \mathbf{t}_i(\eta)  + \sum_{j=1}^l c_j \mathbf{n}_j(\eta), \quad |b_i| \lesssim r^{-1} , \quad |c_j| \lesssim r^{-2}.
\end{equation*}

For $\omega \in \Theta(\sigma,\xi)$ we have
\begin{equation*}
\pi_{\leq n}(\omega) = h \Gamma_Q(\xi) + \sum_{i=1}^d \beta_i \mathbf{t}_i(\xi) + \sum_{j=1}^l \gamma_j \mathbf{n}_j(\xi), \quad |\beta_i| \lesssim r^{-1} \sigma, \quad |\gamma_j| \lesssim r^{-2}.
\end{equation*}

We outline the argument: 

\medskip

In $\Omega_{\sigma}$ we sort the $\theta(\xi)$ into centered planks $\Theta(\sigma,\eta)$ by $|\xi - \eta| \leq 4 r^{-1} \sigma^{-1}$.
We shall see that if $\omega \in \tilde{\theta}(\xi) \cap \Omega_{\sigma}$ then we find $\eta \in \mathcal{R}(r^{-2} \sigma^{-2})$ with $|\xi - \eta| \lesssim r^{-1} \sigma^{-1}$ and $\omega \in 10 \Theta(\sigma,\eta)$.
Next, we argue that the $\Theta(\sigma,\eta)$ are essentially disjoint in $\Omega_{\sigma}$. The argument is different for $|h| \sim \sigma^2$ and $|h| \ll \sigma^2$.
\begin{itemize}
\item For $\omega \in \tilde{\theta}\cap \Omega_{\sigma}$ with $|h| \sim \sigma^2$ we find $\pi_{\leq n} \omega$ to be in the $r^{-1} \sigma$-neighborhood of the curve $h \Gamma_Q$. At this point we conclude the finite overlap of $\Theta(\sigma,\xi)$ since they form a canonical covering of the $r^{-1} \sigma$-neighborhood of $h \Gamma_Q$ after projecting to the first $n$ components.
\item Secondly, we shall see that the differencing in $\Omega_{\sigma}$, for $|h| \leq \sigma^2/4$, reduces the $n$-dimensional boxes constituting $\pi_{\leq n}(\Omega_{\sigma})$ (that is $\bigcup \Theta(\sigma,\xi)$ and taking out $\bigcup \Theta(\sigma/2,\xi)$) to its ``ends", where one tangential component has a minimum length  $\sim r^{-1} \sigma$. Then the finite overlap is concluded by the ends being essentially disjoint.
\end{itemize}

The following two lemmas will be helpful. 
The first lemma addresses the differencing when defining $\Omega_{\sigma}$.
\begin{lemma}
\label{lem:RepI}
Let $\sigma \in 2^{\Z} \cap [r^{-1},1]$, $|h| \leq \sigma^2$, and $|\eta| \leq 1$. Let
\begin{equation*}
\omega = h \mathbf{c}(\eta) + \sum_{i=1}^d \ell_i \mathbf{t}_i(\eta) + \sum_{j=1}^l c_j \mathbf{n}_j(\eta), \quad |\ell_i| \leq D r^{-1} \sigma, \; \text{ and } |c_j| \leq D r^{-2}.
\end{equation*}
Then there is $\xi \in \mathcal{R}(r^{-2} \sigma^{-2})$ with $|\xi - \eta| \lesssim r^{-1} \sigma^{-1}$ and
\begin{equation}
\label{eq:RepPII}
\omega = h \mathbf{c}(\xi) + \sum_{i=1}^d \ell'_i \mathbf{t}_i(\xi) + \sum_{j=1}^l c_j' \mathbf{n}_k(\xi), \quad |\ell_i'| \leq C D r^{-1} \sigma, \text{ and } |c_j'| \leq C D r^{-2}.
\end{equation}
For $|h| \ll \sigma^2$ we can choose $\xi \in \mathcal{R}(r^{-2} (\sigma/2)^{-2})$ such that \eqref{eq:RepPII} holds.
\end{lemma}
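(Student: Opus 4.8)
The plan is to prove Lemma \ref{lem:RepI} by a direct change-of-center computation: we are given the representation of $\omega$ adapted to the center $\eta$, and we want to re-expand it around a nearby lattice center $\xi \in \mathcal{R}(r^{-2}\sigma^{-2})$. First I would pick $\xi$ to be the closest point of the lattice $\mathcal{R}(r^{-2}\sigma^{-2})$ to $\eta$, so that automatically $|\xi - \eta| \lesssim r^{-1}\sigma^{-1}$ (the lattice spacing). Then I would substitute $\eta = \xi + (\eta - \xi)$ into each of the building blocks $\mathbf{c}(\eta)$, $\mathbf{t}_i(\eta)$, $\mathbf{n}_j(\eta)$ and use that $Q$ is quadratic, so that $\mathbf{t}_i(\eta) = \mathbf{t}_i(\xi) + \sum_k (\eta-\xi)_k \mathbf{t}_{ik}$ where $\mathbf{t}_{ik}$ are the constant second-derivative vectors $\partial_i\partial_k \Gamma_Q$; similarly $\mathbf{n}_j(\eta) = \mathbf{n}_j(\xi) + (\text{linear in } \eta - \xi)$ in the first $d$ components only; and $\mathbf{c}(\eta) = (\eta, Q(\eta), 1)$ differs from $\mathbf{c}(\xi)$ by a vector whose first $d$ components are $\eta - \xi$ (size $\lesssim r^{-1}\sigma^{-1}$) and whose remaining components are $Q(\eta) - Q(\xi)$ together with $0$. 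The key point is that all of these correction terms, once multiplied by the coefficients $h$, $\ell_i$, $c_j$ and re-decomposed in the basis $\{\mathbf{c}(\xi), \mathbf{t}_i(\xi), \mathbf{n}_j(\xi)\}$, land back in the allowed ranges up to a harmless constant $C$.

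The bookkeeping splits naturally into the three types of correction. The $\mathbf{t}_i$ and $\mathbf{n}_j$ corrections are linear in $\eta - \xi$: a term like $\ell_i (\eta - \xi)_k \mathbf{t}_{ik}$ contributes, after expressing the constant vector $\mathbf{t}_{ik}$ in the $\xi$-basis, something of size $\lesssim r^{-1}\sigma \cdot r^{-1}\sigma^{-1} = r^{-2}$ to the $\mathbf{t}(\xi)$ and $\mathbf{n}(\xi)$ slots — and $r^{-2} \leq r^{-1}\sigma$ since $\sigma \geq r^{-1}$, so these are absorbed into both the $\ell'_i$ and $c'_j$ tolerances. The genuinely delicate contribution is the one coming from $h\,\mathbf{c}(\eta)$, because $h$ can be as large as $\sigma^2$. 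Here $h(\mathbf{c}(\eta) - \mathbf{c}(\xi))$ has first-$d$-components $h(\eta-\xi)$ of size $\lesssim \sigma^2 \cdot r^{-1}\sigma^{-1} = r^{-1}\sigma$, which is exactly the tangential tolerance — so this piece lands in the $\mathbf{t}_i(\xi)$ coefficients at the allowed scale. For the remaining components, $h(Q(\eta) - Q(\xi))$, one writes $Q(\eta) - Q(\xi) = \langle \text{(linear)}, \eta - \xi\rangle + \tfrac12\langle \eta - \xi, A(\eta-\xi)\rangle$; after subtracting off the part already accounted for by the $h\mathbf{t}_i(\xi)$-adjustment (the linear-in-$\eta-\xi$ piece is precisely what combines with the tangential components), the leftover is $\lesssim \sigma^2 \cdot (r^{-1}\sigma^{-1})^2 = r^{-2}$, again within the normal tolerance $c'_j \lesssim r^{-2}$.

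For the last sentence of the lemma, when $|h| \ll \sigma^2$ we want to use the finer lattice $\mathcal{R}(r^{-2}(\sigma/2)^{-2})$; the same argument applies verbatim with $\sigma$ replaced by $\sigma/2$ in the lattice spacing, and the only place where the smallness of $h$ is used is to check that the $h\,\mathbf{c}$-correction is still controlled — but now $|\eta - \xi| \lesssim r^{-1}(\sigma/2)^{-1} = 2r^{-1}\sigma^{-1}$, and the dangerous term $h(\eta-\xi)$ is $\lesssim |h| \cdot r^{-1}\sigma^{-1} \ll \sigma^2 \cdot r^{-1}\sigma^{-1} = r^{-1}\sigma$, comfortably inside the tolerance (and even inside $r^{-1}(\sigma/2)$ after adjusting $D$), while the quadratic leftover is $\lesssim |h| (r^{-1}\sigma^{-1})^2 \ll r^{-2}$. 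I expect the main obstacle to be purely organizational: keeping track of which correction term feeds into which coefficient slot, and in particular making sure that the cross-terms between the $h\mathbf{c}(\xi)$ expansion and the $\mathbf{t}_i(\xi)$ expansion are matched up so that the $\mathbf{t}_i$-coefficients really do stay $\lesssim r^{-1}\sigma$ rather than degrading. Since everything is polynomial of degree at most two and all the relevant linear maps (change of basis between the $\eta$- and $\xi$-adapted frames) have norms bounded in terms of the spectral radii of the $A_k$ and the transversality constant $c$, no estimate can fail — it is a matter of collecting terms and invoking $\sigma \geq r^{-1}$ at the two or three points indicated.
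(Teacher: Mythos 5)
Your proposal is correct and follows essentially the same route as the paper: expand $\mathbf{c}$, $\mathbf{t}_i$, $\mathbf{n}_j$ in Taylor series around the nearest lattice point $\xi$ (exact at second order since $Q$ is quadratic), express the constant second-derivative vectors in the $\xi$-adapted frame, and verify the size estimates $|h\,\Delta\xi + \ell_i| \lesssim r^{-1}\sigma$, $|h||\Delta\xi|^2 \lesssim r^{-2}$, $|\ell_i||\Delta\xi| \lesssim r^{-2}$, $|c_j||\Delta\xi| \lesssim r^{-2}$, which is precisely the bookkeeping in the paper's proof. One small slip: the lattice $\mathcal{R}(r^{-2}(\sigma/2)^{-2})$ has spacing $2r^{-1}\sigma^{-1}$ and is therefore \emph{coarser}, not finer, than $\mathcal{R}(r^{-2}\sigma^{-2})$; your subsequent numerical estimates correctly use the larger spacing, so this is a terminology issue only and does not affect the argument.
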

\begin{proof}
%

Suppose we have the representation
\begin{equation*}
p' = h \Gamma_Q(\eta) + \sum_{i=1}^d \ell_i \mathbf{t}_i(\eta) + \sum_{j=1}^l c_j \mathbf{n}_j(\eta).
\end{equation*}
We use a Taylor expansion to write for $\xi \in \mathcal{R}(r^{-2} \sigma^{-2})$, $\eta = \xi + \Delta \xi$:
\begin{equation*}
\begin{split}
p' &= h \Gamma_Q(\xi + \Delta \xi) + \sum_{i=1}^d \ell_i \mathbf{t}_i(\xi + \Delta \xi) + \sum_{i=1}^l c_j \mathbf{n}_j(\xi + \Delta \xi) \\
&= h (\Gamma_Q(\xi) + \sum_{i=1}^d (\Delta \xi)_i \mathbf{t}_i(\xi) + \sum_{i,j=1}^d \partial^2_{i,j} \Gamma_Q(\xi) (\Delta \xi)_i (\Delta \xi)_j ) \\
&\quad + \sum_{i=1}^d \ell_i ( \mathbf{t}_i(\xi) + \sum_{j=1}^d (\Delta \xi_j) \partial_j \mathbf{t}_i(\xi) ) + \sum_{j=1}^l c_j (\mathbf{n}_j(\xi) + \sum_{i=1}^d (\Delta \xi)_i \partial_i \mathbf{n}_j(\xi)) \\
&= h \Gamma_Q(\xi) + \sum_{i=1}^d (h (\Delta \xi)_i + \ell_i) \mathbf{t}_i(\xi) + \sum_{j=1}^l c_j \mathbf{n}_j(\xi) 
\\ &\quad + \sum_{i,j=1}^d \partial^2 \Gamma_Q(\xi) (\Delta \xi)_i (\Delta \xi)_j h + \sum_{i,j = 1}^d \Delta \xi_j \partial_j \mathbf{t}_i(\xi) + \sum_{i,j} c_j (\Delta \xi)_i \partial_i \mathbf{n}_j(\xi). 
\end{split}
\end{equation*}
For the expressions in the second line note that for $|\Delta \xi| \lesssim r^{-1} \sigma^{-1}$ small such that
\begin{equation}
\label{eq:SizeEstimatesRepresentation}
|h (\Delta \xi)_i + \ell_i| \lesssim r^{-1} \sigma, \quad |\Delta \xi|^2 h \lesssim r^{-2}, \quad |c_j (\Delta \xi)_i| \lesssim r^{-2}.
\end{equation}
It is easy to see that $\partial_j \mathbf{t}_i$, $\partial^2_{i,j} \Gamma_Q$, and $\partial_i \mathbf{n}_j$ can be expressed as linear combinations of $\mathbf{t}_i$ and $\mathbf{n}_j$. This yields the representation
\begin{equation*}
p = h \mathbf{c}(\xi) + \sum_{i=1}^d (h (\Delta \xi)_i + \ell_i + \mathcal{O}(r^{-2})) \mathbf{t}_i(\xi) + \sum_{j=1}^l (c_j+\mathcal{O}(r^{-2})) \mathbf{n}_j(\xi),
\end{equation*}
which completes the proof for $|h| \sim \sigma^2$. For $|h| \ll \sigma^2$, we can choose $\xi \in \mathcal{R}(r^{-2} (\sigma/2)^{-2})$ as slightly larger $|\Delta \xi|$ is tolerable in \eqref{eq:SizeEstimatesRepresentation}. This completes the proof.
\end{proof}

In the second lemma we obtain a representation of $\tilde{\theta}(\xi)$ suitable for future purposes:
\begin{lemma}
\label{lem:RepII}
Let $|h| \leq \sigma^2$, $\omega \in \tilde{\theta}(\xi) \cap \{ \omega_{n+1} = h \} \cap \Omega_{\sigma}$. Then it holds the representation
\begin{equation}
\label{eq:PRepresentation}
\omega = h \mathbf{c}(\eta) + \sum_{i=1}^{d} \ell_i \mathbf{t}_i(\eta) +  \sum_{j=1}^l c_j \mathbf{n}(\eta)
\end{equation}
with $\eta \in \mathcal{R}(r^{-2} \sigma^{-2})$, $|\xi - \eta| \leq 4 r^{-1} \sigma^{-1}$, and $|\ell_i| \lesssim r^{-1} \sigma$, $|c_j| \lesssim r^{-2}$.

\smallskip

For $|h| \ll \sigma^2$, \eqref{eq:PRepresentation} holds with $|\ell| \sim r^{-1} \sigma$.
\end{lemma}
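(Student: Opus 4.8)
\textbf{Proof proposal for Lemma \ref{lem:RepII}.}

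The plan is to reduce the statement to Lemma \ref{lem:RepI} by first producing \emph{some} representation of $\omega$ of the type appearing there, and then feeding it into that lemma to move the base point onto the lattice $\mathcal{R}(r^{-2}\sigma^{-2})$ while controlling $|\xi - \eta|$. Concretely, since $\omega \in \tilde\theta(\xi) = \theta(\xi) - \theta(\xi)$, the definition of the slabs $\theta_{\eta,r^{-2}}$ in Section \ref{section:Preliminaries} gives directly
\begin{equation*}
\omega = a\,\mathbf{c}(\xi) + \sum_{i=1}^d b_i \mathbf{t}_i(\xi) + \sum_{j=1}^l c_j \mathbf{n}_j(\xi),\qquad |a|\le 1,\ |b_i|\le 2Dr^{-1},\ |c_j|\le 2Dr^{-2},
\end{equation*}
and by hypothesis the $(n+1)$st coordinate forces $a = h$, so $|a| = |h| \le \sigma^2$. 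This is exactly the shape of the input hypothesis of Lemma \ref{lem:RepI} (with $\ell_i = b_i$), \emph{except} that the tangential coefficients are only bounded by $\lesssim r^{-1}$ rather than $\lesssim r^{-1}\sigma$. So the first real step is to upgrade $|b_i| \lesssim r^{-1}$ to $|b_i| \lesssim r^{-1}\sigma$ using the membership $\omega \in \Omega_\sigma$.

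The key point is that $\omega \notin \bigcup \mathbf{CP}_{\sigma/2}$: if all the tangential coefficients (in any lattice-adjacent frame) were much smaller than $r^{-1}\sigma$, then $\omega$ would already lie in a centered plank $\Theta(\sigma/2,\cdot)$, contradicting $\omega \in \Omega_\sigma$. More precisely, I would argue that $\omega \in \bigcup \mathbf{CP}_\sigma$ (which is clear since $\tilde\theta(\xi) \subseteq \bigcup \mathbf{CP}_1$ and the decomposition $\bigcup\mathbf{CP}_1 = \Omega_1 \cup \cdots \cup \Omega_{r^{-1}}$ places $\omega$ in $\Omega_\sigma$) gives, via the geometry of centered planks together with Lemma \ref{lem:RepI} applied at scale $\sigma$, a base point $\eta \in \mathcal{R}(r^{-2}\sigma^{-2})$ with $|\xi - \eta| \lesssim r^{-1}\sigma^{-1}$ and a representation with $|\ell_i| \lesssim r^{-1}\sigma$, $|c_j|\lesssim r^{-2}$; this is precisely \eqref{eq:PRepresentation}. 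The bound $|\xi-\eta| \le 4r^{-1}\sigma^{-1}$ is the constant $C$ from Lemma \ref{lem:RepI} absorbed into the sorting radius fixed in the outline ("sort the $\theta(\xi)$ into $\Theta(\sigma,\eta)$ by $|\xi-\eta|\le 4r^{-1}\sigma^{-1}$"), so one just checks the implied constant in Lemma \ref{lem:RepI} is $\le 4$, or enlarges $E$ accordingly.

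For the last assertion, when $|h| \ll \sigma^2$ one must show the stronger \emph{lower} bound $|\ell_i| \sim r^{-1}\sigma$ (for at least one $i$, I expect — the displayed $|\ell|\sim r^{-1}\sigma$ should be read as the length of the tangential part). The mechanism is the second bullet in the outline: in this regime $\omega \notin \bigcup\mathbf{CP}_{\sigma/2}$ and $|h|\le \sigma^2/4$ together with Lemma \ref{lem:RepI} (which, for $|h|\ll\sigma^2$, permits choosing $\xi$ on the finer lattice $\mathcal{R}(r^{-2}(\sigma/2)^{-2})$) force $\omega$ onto the "ends" of the $n$-dimensional box $\pi_{\le n}\Theta(\sigma,\cdot) \setminus \pi_{\le n}\Theta(\sigma/2,\cdot)$, where one tangential coordinate is pinned to magnitude comparable to $r^{-1}\sigma$. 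I would make this quantitative by contradiction: if $|\ell_i| \le c\,r^{-1}\sigma$ for all $i$ with $c$ a small dimensional constant, then combined with $|h| \le \sigma^2/4$ the same Taylor expansion as in Lemma \ref{lem:RepI} rewrites $\omega$ in a frame $\mathbf{c}(\eta'),\mathbf{t}_i(\eta'),\mathbf{n}_j(\eta')$ with all tangential coefficients $\le D r^{-1}(\sigma/2)$ and $|h| \le (\sigma/2)^2$, i.e. $\omega \in \Theta(\sigma/2,\eta') \subseteq \bigcup\mathbf{CP}_{\sigma/2}$, contradicting $\omega\in\Omega_\sigma$.

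\textbf{Main obstacle.} The delicate point is the bookkeeping of the additive error $\mathcal{O}(r^{-2})$ that the Taylor expansion in Lemma \ref{lem:RepI} introduces into the tangential coefficients: for the lower bound $|\ell|\sim r^{-1}\sigma$ one needs $r^{-2} \ll r^{-1}\sigma$, i.e. $r^{-1}\ll \sigma$, which is automatic since $\sigma \ge r^{-1}$ — but only with a loss of one dyadic step, which is exactly why the cutoff $|h|\le\sigma^2/4$ (rather than $|h|<\sigma^2$) and the freedom to pass to the $\sigma/2$-lattice are built into Lemma \ref{lem:RepI}. Getting the constants to close — so that "not in $\mathbf{CP}_{\sigma/2}$" genuinely yields a clean lower bound on one tangential coefficient rather than merely "not all coefficients tiny" — is the part that requires care; everything else is the routine translation between the slab/plank coordinates and the Frenet-type frame $\{\mathbf{c},\mathbf{t}_i,\mathbf{n}_j\}$, using that $\partial_j\mathbf{t}_i$, $\partial^2_{ij}\Gamma_Q$ and $\partial_i\mathbf{n}_j$ are bounded linear combinations of the $\mathbf{t}_i$ and $\mathbf{n}_j$ (a consequence of $Q$ being quadratic, hence $\partial^2 q_k$ constant).
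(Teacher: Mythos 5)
There is a genuine gap in your proposal, and it is at the heart of what the lemma is actually proving. You correctly observe that $\omega \in \tilde\theta(\xi)$ yields a representation centered at $\xi$ with tangential coefficients only $\lesssim r^{-1}$, and that $\omega \in \Omega_\sigma \subseteq \bigcup\mathbf{CP}_\sigma$ yields a second representation centered at some $\eta' \in \mathcal{R}(r^{-2}\sigma^{-2})$ with tangential coefficients $\lesssim r^{-1}\sigma$. You then assert that this, "via the geometry of centered planks together with Lemma \ref{lem:RepI} applied at scale $\sigma$," produces a base point $\eta$ with \emph{both} $|\xi-\eta| \lesssim r^{-1}\sigma^{-1}$ and tangential coefficients $\lesssim r^{-1}\sigma$ — but you give no argument that the base point of a centered plank containing $\omega$ must lie within $r^{-1}\sigma^{-1}$ of $\xi$. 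Lemma \ref{lem:RepI} only moves a base point to a \emph{nearby} lattice point while preserving coefficient bounds; it does not tell you the two a-priori-unrelated base points $\xi$ and $\eta'$ are close. That is precisely the nontrivial content here.

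In the paper's proof this step is where Assumption \ref{ass:SpanCondition} enters: equating the two representations and Taylor expanding at $\xi$ produces, after isolating the second-order terms, the system
\begin{equation*}
\big\langle \tfrac{h\,\Delta\xi}{2} + \ell',\, A_m\,\Delta\xi \big\rangle = \mathcal{O}(r^{-2}), \qquad m=1,\ldots,l,
\end{equation*}
and the transversality of $\{A_m\,\Delta\xi\}$ lets one take the linear combination $\Delta\xi = \sum c_m A_{i_m}\Delta\xi$ and conclude $h\,|\Delta\xi|^2 = \mathcal{O}(r^{-2})$, hence $|\Delta\xi|\lesssim r^{-1}\sigma^{-1}$ when $|h|\sim\sigma^2$ (and a more elaborate case analysis when $|h|\ll\sigma^2$). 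Your closing remark that the rest is "routine translation between the slab/plank coordinates and the Frenet-type frame" overlooks that this transversality argument — not the frame bookkeeping — is what rules out $|\Delta\xi|\gg r^{-1}\sigma^{-1}$ and therefore the upper bound $|\ell_i|\lesssim r^{-1}\sigma$. Your treatment of the lower bound $|\ell|\gtrsim r^{-1}\sigma$ for $|h|\ll\sigma^2$ (contradiction with $\omega\notin\bigcup\mathbf{CP}_{\sigma/2}$ via Lemma \ref{lem:RepI}) is sound and matches the paper's opening observation for that case, but the upper bound there also requires a further case analysis (comparing $|\ell'|$ with $|h|$, and dividing by $h$ to compare norms) that your sketch does not address.
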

\begin{proof}
First, we turn to the case $|h| \sim \sigma^2$, $\omega \in \tilde{\theta}(\xi) \cap \{ \omega_{n+1} = h \} \cap \Omega_{\sigma}$. This yields the representation
\begin{equation*}
\omega = h \mathbf{c}(\xi)+ \sum_{i=1}^d \ell_i \mathbf{t}_i(\xi) + \sum_{j=1}^l c_j \mathbf{n}_j(\xi) = h \mathbf{c}(\eta) + \sum_{i=1}^d \ell_i' \mathbf{t}_i(\eta) + \sum_{j=1}^l c_j' \mathbf{n}_j(\eta)
\end{equation*}
for $|\ell'| \lesssim r^{-1} \sigma$, $|\ell| \lesssim r^{-1}$, $|c_j|, |c_j'| \lesssim r^{-2}$.

\smallskip

If $|\ell| \lesssim |\ell'| \lesssim r^{-1} \sigma$, we can invoke Lemma \ref{lem:RepI} to conclude the argument. So suppose that $|\ell'| \ll |\ell|$. Let $\eta = \xi + \Delta \xi$. We carry out a Taylor expansion of the right handside at $\xi$:
\begin{equation*}
\begin{split}
&\quad h \mathbf{c}(\xi) + \sum_{i=1}^d \ell_i \mathbf{t}_i(\xi) + \sum_{j=1}^l c_j \mathbf{n}_j(\xi) \\
 &= h \mathbf{c}(\xi + \Delta \xi) + \sum_{i=1}^d \ell_i' \mathbf{t}_i(\xi + \Delta \xi) + \sum_{j=1}^l c_j \mathbf{n}_j(\xi + \Delta \xi) \\
&= h ( \mathbf{c}(\xi) + \sum_{i=1}^d (\Delta \xi)_i \mathbf{t}_i(\xi) + \sum_{i,j=1}^d \frac{(\Delta \xi)_i (\Delta \xi)_j}{2} \partial_j \mathbf{t}_i(\xi)) \\
&\quad + \sum_{i=1}^d \ell_i' \mathbf{t}_i(\xi) + \sum_{i,j=1}^d \ell_i' (\Delta \xi)_j \partial_j \mathbf{t}_i(\xi) + \sum_{m=1}^l c_m \mathbf{n}_m(\xi + \Delta \xi).
\end{split}
\end{equation*}
By subsuming the first $d$ components and then the next $l$ components of the above vector identity, we obtain
\begin{equation*}
\left\{ \begin{array}{cl}
\ell &= h \Delta \xi + \ell' + \mathcal{O}(r^{-2}), \\
\sum_{i,j=1}^d [ h/2 (\Delta \xi)_i (\Delta \xi)_j + \ell_i' (\Delta \xi)_j ] \partial_j \mathbf{t}_i(\xi) &= \mathcal{O}(r^{-2}).
\end{array} \right.
\end{equation*}

The derivatives $\partial_j \mathbf{t}_i$ are comprised of the generators of the quadratic forms:
\begin{equation*}
\partial_j \mathbf{t}_i = 
( 0 , \ldots , 0 , (A_1)_{ij} , \ldots , (A_{l})_{ij})^t.
\end{equation*}
Consequently, the second condition gives
\begin{equation}
\label{eq:SecondConditionII}
\langle \frac{h \Delta \xi}{2} + \ell', A_m (\Delta \xi) \rangle  = \mathcal{O}(r^{-2}), \quad m=1,\ldots,l.
\end{equation}

 Suppose that $|h \Delta \xi| \gg r^{-1} \sigma$. Choose $A_{i_1}, \ldots, A_{i_d}$ such that we have by the transversality assumption \eqref{eq:SpanCondition}:
\begin{equation*}
|\det (A_{i_1} \Delta \xi, \ldots, A_{i_d} \Delta \xi)| \geq c > 0.
\end{equation*}
Then we can choose coefficients $|c_1|,\ldots,|c_d| \lesssim 1$
such that $\Delta \xi = \sum_{m=1}^d c_m A_{i_m} \Delta \xi$. We obtain from taking linear combinations of \eqref{eq:SecondConditionII}
\begin{equation*}
\langle \frac{h \Delta \xi}{2} + \ell', \sum_{m=1}^d c_m A_{i_m} \Delta \xi \rangle = \mathcal{O}(r^{-2}).
\end{equation*}
Since by assumption $|\ell'| \ll h |\Delta \xi|$, it follows
$h |\Delta \xi|^2 = \mathcal{O}(r^{-2})$, which yields $|\Delta \xi| \lesssim r^{-1} \sigma^{-1}$. Consequently,
\begin{equation*}
|\ell| \lesssim h |\Delta \xi| + |\ell'| + \mathcal{O}(r^{-2}) \lesssim r^{-1} \sigma,
\end{equation*}
which completes the proof in case $|h| \sim \sigma^2$.

\medskip

Next, we turn to the case $|h| \ll \sigma^2$. The case $|h| \lesssim r^{-2}$ is straight-forward. In this case a representation with $|c_j|, |c_j'| \lesssim r^{-2}$
\begin{equation*}
\omega = h \mathbf{c}(\xi) + \sum_{i=1}^d \ell_i \mathbf{t}_i(\xi) + \sum_{j=1}^l c_j \mathbf{n}_j(\xi) = h \mathbf{c}(\eta) + \sum_{i=1}^d \ell_i' \mathbf{t}_i(\eta) + \sum_{j=1}^l c'_j \mathbf{n}_j(\eta)
\end{equation*}
implies either $|\ell_i| \sim |\ell_i'| \gg r^{-2}$ or $|\ell| + |\ell'| \lesssim r^{-2}$. Both cases can be concluded readily.

\smallskip

Suppose we have the representation for $\omega \in \tilde{\theta}(\xi) \cap \Omega_{\sigma}$ with
\begin{equation*}
\omega = h \mathbf{c}(\xi) + \sum_{i=1}^d \ell_i \mathbf{t}_i(\xi) + \sum_{j=1}^l c_j \mathbf{n}_j(\xi) = h \mathbf{c}(\eta) + \sum_{i=1}^d \ell_i' \mathbf{t}_i(\eta) + \sum_{j=1}^l c'_j \mathbf{n}_j(\eta)
\end{equation*}
for $|\ell_i| \leq D r^{-1}$ and $|c_j| \leq D r^{-2}$. Note that in case $|\ell| \ll r^{-1} \sigma$ we have $\omega \in \mathbf{CP}_{\sigma/2}$ by Lemma \ref{lem:RepI}, which contradicts $\omega \in \Omega_{\sigma}$.

We rule out the case $|\ell| \gg r^{-1} \sigma \gtrsim |\ell'|$ in the following.

First, suppose that $|\ell'| \lesssim |h|$. In case $|\ell'| \sim |h|$ and $|\ell| \gg |\ell'| \sim |h|$ there is a contradiction taking the modulus of the left- and right-hand side.

If $|\ell'| \ll |h|$ we can write
\begin{equation*}
h \mathbf{c}(\eta) + \sum_{i=1}^d \ell_i' \mathbf{t}_i(\eta) + \sum_{j=1}^l c_j \mathbf{n}_j(\eta) = h \mathbf{c}(\eta + \ell'/h) + \mathcal{O}( \frac{(\ell')^2}{h} + r^{-2}).
\end{equation*}
This shows that $\omega \in \mathcal{N}_{\frac{r^{-1} \sigma}{10}}(h \mathbf{c}(\xi))$, but $\mathbf{CP}_{\sigma/2}$ covers this neighborhood at this height.


\medskip

So, we can suppose that $\frac{|\ell|}{|h|} \gg 1$. Suppose that we have the representation (by our assumption $p \in \tilde{\theta}(\xi) \cap \{ \omega_{n+1} = h \} \cap \Omega_{\sigma}$):
\begin{equation*}
h \mathbf{c}(\xi) + \sum_{i=1}^d \ell'_i \mathbf{t}_i(\xi) + \sum_{i=1}^l c_j' \mathbf{n}_j(\xi) = h \mathbf{c}(\eta) + \sum_{i=1}^d \ell_i \mathbf{t}_i(\eta) + \sum_{j=1}^l c_j \mathbf{n}_j(\eta)
\end{equation*}
for $|\ell'| \gg r^{-1} \sigma$, $|\ell| \lesssim r^{-1} \sigma$, and $|c_j|,|c'_j| \lesssim r^{-2}$. Indeed, if $|\ell'| \lesssim r^{-1} \sigma$, we can finish the proof by invoking Lemma \ref{lem:RepI}. But in case $\frac{|\ell'|}{|h|} \gg \frac{|\ell|}{|h|} \gg 1$ we can divide the above display by $h$ and compute the norms to be
\begin{equation*}
\| (\text{rhs}) \| \sim |\ell'| / |h| \gg  |\ell|/|h| + C r^{-2} / |h|,
\end{equation*}
which yields a contradiction. 

Finally, we point out that for $\omega \in \tilde{\theta}(\xi) \cap \Omega_{\sigma}$ a representation:
\begin{equation}
\label{eq:RepDifferentLength}
\omega = h \mathbf{c}(\xi) + \sum_{i=1}^d \ell_i \mathbf{t}_i(\xi) + \sum_{j=1}^l c_j \mathbf{n}_j(\xi) = h \mathbf{c}(\eta) + \sum_{i=1}^d \ell_i' \mathbf{t}_i(\eta) + \sum_{j=1}^l c_j' \mathbf{n}_j(\eta)
\end{equation}
with $|h| \ll \sigma^2$ and $|\ell_i'| \ll r^{-1} \sigma$ is likewise impossible.

First note that Lemma \ref{lem:RepI} has already given that $|\ell| \sim r^{-1} \sigma$.

In case $|\ell| \gg |h|$, the identity \eqref{eq:RepDifferentLength} is clearly impossible as there is an issue with the norm of the left- and right-hand side.

Suppose that $|\ell'| \ll |\ell| \lesssim |h|$. In this case we can argue like in the preceding paragraph and write
\begin{equation*}
h \mathbf{c}(\eta + \frac{\ell'}{h}) = h \mathbf{c}(\eta) + \sum_{i=1}^d \ell_i' \mathbf{t}_i(\eta) + \mathcal{O}( \frac{(\ell')^2}{h} ),
\end{equation*}
which shows that the right-hand side is in the $\frac{r^{-1} \sigma}{10}$-neighborhood of $h \mathbf{c}(\eta)$, which is contained in $\bigcup \mathbf{CP}_{\sigma/2}$. This makes $\omega \in \Omega_{\sigma}$ impossible.
The proof is complete.

%
%
\end{proof}

The following corollary summarizes up Lemmas \ref{lem:RepI} and \ref{lem:RepII}:
\begin{corollary}
\label{cor:SortingPlanks}
Let $\omega \in \Omega_{\sigma} \cap \tilde{\theta}(\xi)$. Then there is $\Theta(\sigma,\eta) \in \mathbf{CP}_{\sigma}$ with $\omega \in 10 \Theta(\sigma,\eta)$ and $|\xi-\eta| \leq 4 r^{-1} \sigma^{-1}$.
\end{corollary}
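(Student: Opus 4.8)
The plan is to deduce Corollary~\ref{cor:SortingPlanks} directly from the two representation lemmas, distinguishing the same two regimes in $\sigma^2$ that appear there. Fix $\omega \in \Omega_\sigma \cap \tilde\theta(\xi)$ and set $h = \omega_{n+1}$; by the definition of $\mathbf{CP}_\sigma$ and $\Omega_\sigma$ we have $|h| \leq \sigma^2$ (membership of $\omega$ in $\bigcup\mathbf{CP}_1$ forces $|h|\le 1$, and if $|h|>\sigma^2$ then $\omega$ would already lie in some $\bigcup\mathbf{CP}_{\sigma'}$ with $\sigma'>\sigma$). So we are exactly in the hypothesis range of Lemma~\ref{lem:RepII}.

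First I would treat the generic height $|h| \sim \sigma^2$. Apply Lemma~\ref{lem:RepII}: since $\omega \in \tilde\theta(\xi) \cap \{\omega_{n+1}=h\} \cap \Omega_\sigma$, there exists $\eta \in \mathcal{R}(r^{-2}\sigma^{-2})$ with $|\xi-\eta| \leq 4 r^{-1}\sigma^{-1}$ and a representation $\omega = h\mathbf{c}(\eta) + \sum_{i=1}^d \ell_i \mathbf{t}_i(\eta) + \sum_{j=1}^l c_j \mathbf{n}_j(\eta)$ with $|\ell_i| \lesssim r^{-1}\sigma$ and $|c_j| \lesssim r^{-2}$. Comparing with the definition of $\Theta(\sigma,\eta)$ — whose parameters satisfy $-\sigma^2 \le a \le \sigma^2$, $|b_i| \le E r^{-1}\sigma$, $|c_i| \le E r^{-2}$ — we see that $\omega \in C\,\Theta(\sigma,\eta)$ for an absolute constant $C$ coming from the implicit constants in Lemma~\ref{lem:RepII} (and from $|h|\le\sigma^2$), which after enlarging the dimensional constant $E$ we may take to be $10$. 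Thus $\omega \in 10\,\Theta(\sigma,\eta)$ with the required proximity $|\xi-\eta| \le 4r^{-1}\sigma^{-1}$.

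Next I would handle $|h| \ll \sigma^2$. Here Lemma~\ref{lem:RepII} gives the same representation but now with $|\ell_i| \sim r^{-1}\sigma$ (the lower bound is the content of the ``ends'' discussion and of the impossibility of \eqref{eq:RepDifferentLength}). Again this places $\omega$ inside $10\,\Theta(\sigma,\eta)$ for a suitable $\eta \in \mathcal{R}(r^{-2}\sigma^{-2})$ with $|\xi-\eta| \le 4 r^{-1}\sigma^{-1}$; the extra information that $|\ell| \sim r^{-1}\sigma$ is not needed for the containment itself but records that at small heights $\omega$ sits near the end faces of the plank, which is what will later give the improved disjointness. In either regime one should double-check that the scale of $\eta$ is $\mathcal{R}(r^{-2}\sigma^{-2})$ and not $\mathcal{R}(r^{-2}(\sigma/2)^{-2})$; Lemma~\ref{lem:RepII} is phrased with the former, so no adjustment is needed, though the flexibility in Lemma~\ref{lem:RepI} would absorb it if it were.

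The only real bookkeeping obstacle is tracking the dilation constant: the lemmas produce representations with coefficients bounded by $C D r^{-1}\sigma$ and $C D r^{-2}$ for various absorbed constants $C, D$, while $\Theta(\sigma,\eta)$ is defined with $E r^{-1}\sigma$ and $E r^{-2}$, so one must fix $E$ and then choose the dilation factor (here $10$) large enough that $CD \le 10 E$, uniformly in $\sigma$ and $r$. Since all these constants are dimensional (depending only on $d$, $l$, the spectral radii of the $A_i$, and the transversality constant $c$), this is harmless, and the corollary follows by simply invoking Lemmas~\ref{lem:RepI} and~\ref{lem:RepII} in the two cases and comparing box parameters.
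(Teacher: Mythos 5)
Your proof is correct and follows essentially the same path as the paper, which simply states that the corollary summarizes Lemmas~\ref{lem:RepI} and~\ref{lem:RepII}; you have merely spelled out the bookkeeping (the two height regimes, the observation that $\Omega_\sigma\subseteq\bigcup\mathbf{CP}_\sigma$ forces $|h|\le\sigma^2$, and the absorption of the implicit constants into the dilation factor $10$ relative to the plank constant $E$). The only small blemish is your parenthetical justification of $|h|\le\sigma^2$ — the cleanest statement is that every $\Theta(\sigma,\eta)\in\mathbf{CP}_\sigma$ has last coordinate bounded by $\sigma^2$ by definition, so membership in $\Omega_\sigma\subseteq\bigcup\mathbf{CP}_\sigma$ immediately gives the bound — but this does not affect the correctness of the argument.
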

The insight is the covering of $\omega$ with a centered plank that is close to the governing direction $\eta$ of $\tilde{\theta}$. This will allow us to sort $\theta$ contributing to $\omega \in \Omega_{\sigma}$ into centered planks of coarser scale.

\medskip

It remains to prove the following lemma regarding the finite overlap of $\Theta(\sigma,\xi)$.
\begin{lemma}
\label{lem:FiniteOverlapCP}
Let $\omega \in \Omega_{\sigma}$. Then the following holds:
\begin{equation*}
\# \{ \Theta(\sigma,\xi) : \omega \in 10 \Theta(\sigma,\xi) \} \lesssim 1.
\end{equation*}
\end{lemma}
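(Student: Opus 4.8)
The goal is to bound the number of centered planks $\Theta(\sigma,\xi)$ with $\omega \in 10\Theta(\sigma,\xi)$ by a constant depending only on the dimension and the transversality parameter $c$. I would split according to the height $h = \omega_{n+1}$, exactly as foreshadowed in the outline: the regime $|h| \sim \sigma^2$ and the regime $|h| \ll \sigma^2$ (with the sub-case $|h| \lesssim r^{-2}$ handled trivially). In each regime the point $\omega$ already comes, via Corollary \ref{cor:SortingPlanks}, with \emph{some} admissible plank $\Theta(\sigma,\eta_0)$ covering it; the task is to show all other admissible $\eta$ satisfy $|\xi - \eta| \lesssim r^{-1}\sigma^{-1}$ for the governing direction $\xi$ of $\omega$, hence $|\eta - \eta_0| \lesssim r^{-1}\sigma^{-1}$, so that the lattice $\mathcal{R}(r^{-2}\sigma^{-2})$ contains only $\mathcal{O}(1)$ of them.

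\textbf{The case $|h| \sim \sigma^2$.} Here the argument is essentially the "canonical covering" observation already stated in the outline. If $\omega \in 10\Theta(\sigma,\eta)$ then projecting to the first $n$ coordinates gives
\begin{equation*}
\pi_{\leq n}(\omega) = h\,\Gamma_Q(\eta) + \sum_{i=1}^d \beta_i \mathbf{t}_i(\eta) + \sum_{j=1}^l \gamma_j \mathbf{n}_j(\eta), \qquad |\beta_i| \lesssim r^{-1}\sigma,\ |\gamma_j| \lesssim r^{-2},
\end{equation*}
so $\pi_{\leq n}(\omega)$ lies within $\mathcal{O}(r^{-1}\sigma)$ of the point $h\,\Gamma_Q(\eta)$ on the rescaled manifold $h\,\Gamma_Q$. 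Since $\Gamma_Q$ is a graph of a quadratic map with bounded second derivatives, two such base points $h\,\Gamma_Q(\eta)$, $h\,\Gamma_Q(\eta')$ are $\gtrsim h|\eta - \eta'| \sim \sigma^2 |\eta - \eta'|$ apart in their first $d$ coordinates; for both to be within $\mathcal{O}(r^{-1}\sigma)$ of the fixed point $\pi_{\leq n}(\omega)$ forces $|\eta - \eta'| \lesssim r^{-1}\sigma^{-1}$. I would phrase this simply as: the dilated tubes $\pi_{\leq n}(10\Theta(\sigma,\eta))$ form a finitely-overlapping cover of the $r^{-1}\sigma$-neighborhood of $h\,\Gamma_Q$ at this height, which is the assertion.

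\textbf{The case $|h| \ll \sigma^2$.} This is where the transversality Assumption \ref{ass:SpanCondition} does real work, and it is the main obstacle: the $h\,\Gamma_Q(\eta)$ base points are now only $\lesssim \sigma^2|\eta-\eta'|$ apart which is \emph{smaller} than the tube width $r^{-1}\sigma$ whenever $|\eta - \eta'| \ll r^{-1}\sigma^{-1}$ fails only weakly, so the projection argument alone does not close. Instead I would use that, by Lemma \ref{lem:RepII}, any $\omega \in \tilde\theta(\xi) \cap \Omega_\sigma$ with $|h| \ll \sigma^2$ forces $|\ell| \sim r^{-1}\sigma$, i.e. the tangential part in \emph{every} admissible representation has length comparable to the maximal width $r^{-1}\sigma$ — the differencing has pushed $\omega$ to the "ends" of the plank. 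Now suppose $\omega \in 10\Theta(\sigma,\eta) \cap 10\Theta(\sigma,\eta')$; equating the two representations and Taylor-expanding the $\eta'$ side about $\eta$ (as in the proof of Lemma \ref{lem:RepII}) yields, after matching tangential and normal components,
\begin{equation*}
\ell = h\,\Delta\eta + \ell' + \mathcal{O}(r^{-2}), \qquad \langle \tfrac{h\Delta\eta}{2} + \ell',\ A_m \Delta\eta \rangle = \mathcal{O}(r^{-2}), \quad m = 1,\dots,l,
\end{equation*}
with $\Delta\eta = \eta - \eta'$. If $|h\,\Delta\eta| \gg r^{-1}\sigma$, choose by \eqref{eq:SpanCondition} indices $i_1,\dots,i_d$ with $|\det(A_{i_1}\Delta\eta',\dots,A_{i_d}\Delta\eta')| \geq c$ where $\Delta\eta' = \Delta\eta/|\Delta\eta|$, write $\Delta\eta = \sum_m c_m A_{i_m}\Delta\eta$ with $|c_m| \lesssim 1$, take the corresponding linear combination of the second set of equations, and — since $|\ell'| \lesssim r^{-1}\sigma \ll |h\,\Delta\eta|$ by Lemma \ref{lem:RepII} — obtain $h|\Delta\eta|^2 = \mathcal{O}(r^{-2})$, i.e. $|\Delta\eta| \lesssim r^{-1}\sigma^{-1}$, contradicting $|h\,\Delta\eta| \gg r^{-1}\sigma$ since $|h| \leq \sigma^2$. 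Hence $|h\,\Delta\eta| \lesssim r^{-1}\sigma$, so from the first equation $|\Delta\eta| = |\eta - \eta'| \lesssim |h|^{-1} r^{-1}\sigma$; combined with the lower bound $|h| \gtrsim r^{-2}$ (the remaining case $|h| \lesssim r^{-2}$ being disposed of directly) this gives $|\eta - \eta'| \lesssim r\sigma$, which after intersecting with $B_d(0,1)$ and the lattice $\mathcal{R}(r^{-2}\sigma^{-2})$ still only allows $\mathcal{O}((r\sigma \cdot r\sigma)^d) $ — here I would need to be slightly more careful and instead extract the sharper bound $|\eta-\eta'| \lesssim r^{-1}\sigma^{-1}$ directly from the geometry of $\Omega_\sigma$, namely that both planks must actually meet the "end" region where one tangential coordinate has length $\sim r^{-1}\sigma$, and the ends associated to lattice points $r^{-1}\sigma^{-1}$-separated are disjoint. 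I expect the honest write-up to mirror the end-of-Lemma \ref{lem:RepII} argument closely, so the cleanest route is to invoke Lemma \ref{lem:RepII} and Corollary \ref{cor:SortingPlanks} to reduce to counting lattice points $\eta \in \mathcal{R}(r^{-2}\sigma^{-2})$ within distance $\lesssim r^{-1}\sigma^{-1}$ of the governing direction $\xi$, of which there are $\mathcal{O}(1)$ by definition of the lattice spacing. The main obstacle is thus ensuring that in the degenerate-height regime the transversality genuinely pins $\Delta\eta$ down to scale $r^{-1}\sigma^{-1}$ rather than a larger scale; this is exactly what Assumption \ref{ass:SpanCondition} buys, and it is the same mechanism that drove the biorthogonality in Theorem \ref{thm:SquareFunctionQuadraticManifold}.
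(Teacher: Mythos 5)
Your treatment of the case $|h| \sim \sigma^2$ is correct and coincides with the paper's (canonical covering at that height, pinning $|\eta-\eta'| \lesssim r^{-1}\sigma^{-1}$ through the first $d$ coordinates). The genuine gap is in the case $|h| \ll \sigma^2$, and it sits exactly where you flag it. You correctly derive the system $\ell = h\Delta\eta + \ell' + \mathcal{O}(r^{-2})$ and $\langle h\Delta\eta/2 + \ell', A_m\Delta\eta\rangle = \mathcal{O}(r^{-2})$ and correctly use Assumption \ref{ass:SpanCondition} to rule out $|h\Delta\eta| \gg r^{-1}\sigma$, but the surviving bound $|\Delta\eta| \lesssim |h|^{-1}r^{-1}\sigma$ is far too weak when $|h| \ll \sigma^2$, and neither of your proposed repairs closes it. The claim that "the ends associated to lattice points $r^{-1}\sigma^{-1}$-separated are disjoint" is false as stated: far-apart planks genuinely can share end points at small heights (this is already visible for the light cone). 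And invoking Corollary \ref{cor:SortingPlanks} to "count lattice points within $\lesssim r^{-1}\sigma^{-1}$ of $\xi$" misreads that corollary, which only asserts the \emph{existence} of one nearby covering plank, not that every admissible $\eta$ is near $\xi$ — the latter is precisely what the finite-overlap lemma is trying to prove, so this route is circular.

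The paper's actual mechanism is a sign-flip statement (Lemma \ref{lem:DoubleEndsOverlap}): running your transversality step under the hypothesis $|\Delta\eta| \gg r^{-1}\sigma^{-1}$ (rather than $|h\Delta\eta| \gg r^{-1}\sigma$) gives $|h\Delta\eta/2 + \ell'| \ll r^{-1}\sigma$, and combining with the first equation gives $|h\Delta\eta/2 - \ell| \ll r^{-1}\sigma$, i.e.\ $\ell \approx h\Delta\eta/2 \approx -\ell'$. So two far-apart planks can only share an end point with \emph{opposite} orientation: $\ell_i \sim \mu r^{-1}\sigma$ forces $\ell_i' \sim -\mu r^{-1}\sigma$. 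One then pigeonholes the admissible $\eta$ into the $2d$ classes $\mathcal{M}_{i,\mu}$ according to which tangential component of $\ell(p,\eta)$ is $\sim \mu r^{-1}\sigma$ (Lemma \ref{lem:RepII} guarantees some component is); within a fixed class any two $\eta$'s must satisfy $|\Delta\eta| \lesssim r^{-1}\sigma^{-1}$, hence $\mathcal{O}(1)$ lattice points per class and $\mathcal{O}(d)$ classes. Without this pigeonholing over the sign and the index of the large component, the uniform bound $|\Delta\eta| \lesssim r^{-1}\sigma^{-1}$ you are aiming for simply is not true, so the counting cannot be finished the way you outline it.
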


The next lemma addresses the finite overlap of the ``ends":
\begin{lemma}
\label{lem:DoubleEndsOverlap}
Let $|h| \ll \sigma^2$ and $\sigma \gg r^{-1}$, and $\mu \in \{1;-1\}$. Let $\omega \in \Omega_{\sigma} \cap \{ \omega_{n+1} = h \}$ and suppose that for $|\ell| \sim |\ell_1| \sim r^{-1} \sigma$, $|C|,|C_1| \lesssim r^{-2}$ and for $\Delta \eta = \eta' - \eta$, $|\Delta \eta| \gg r^{-1} \sigma^{-1}$, it holds
\begin{equation}
\label{eq:DoubleEndsRepresentation}
h \mathbf{c}(\eta) + \sum_{i=1}^d \ell_i \mathbf{t}_i(\eta) + \sum_{j=1}^l C_j \mathbf{n}_j(\eta) = h \mathbf{c}(\eta') + \sum_{i=1}^d \ell_i' \mathbf{t}_i(\eta') + \sum_{j=1}^l C'_j \mathbf{n}_j(\eta').
\end{equation}
Then we have
\begin{equation*}
\ell_i \sim \mu r^{-1} \sigma \Rightarrow \ell'_i \sim - \mu r^{-1} \sigma.
\end{equation*}
\end{lemma}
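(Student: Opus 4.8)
\textbf{Plan for the proof of Lemma \ref{lem:DoubleEndsOverlap}.}
The plan is to expand the right-hand side of \eqref{eq:DoubleEndsRepresentation} via a Taylor expansion at $\eta$ with increment $\Delta\eta = \eta' - \eta$, exactly as in the proofs of Lemmas \ref{lem:RepI} and \ref{lem:RepII}, and then read off the tangential and normal components of the resulting vector identity. Writing $\mathbf{c}(\eta') = \mathbf{c}(\eta) + \sum_i (\Delta\eta)_i \mathbf{t}_i(\eta) + \mathcal{O}(|\Delta\eta|^2)$, $\mathbf{t}_i(\eta') = \mathbf{t}_i(\eta) + \sum_j (\Delta\eta)_j \partial_j\mathbf{t}_i(\eta)$, and noting $\partial_j\mathbf{t}_i = (0,\ldots,0,(A_1)_{ij},\ldots,(A_l)_{ij})^t$, the first $d$ components give
\begin{equation*}
\ell = h\,\Delta\eta + \ell' + \mathcal{O}(r^{-2}),
\end{equation*}
while the last $l$ components give the quadratic conditions
\begin{equation*}
\langle \tfrac{h}{2}\Delta\eta + \ell',\, A_m\,\Delta\eta \rangle = \mathcal{O}(r^{-2}), \quad m = 1,\ldots,l.
\end{equation*}

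The next step is to use the transversality Assumption \ref{ass:SpanCondition} on these quadratic conditions. Since $|\Delta\eta| \gg r^{-1}\sigma^{-1}$, normalizing $\Delta\eta' = \Delta\eta/|\Delta\eta|$ we may pick $A_{i_1},\ldots,A_{i_d}$ with $|\det(A_{i_1}\Delta\eta',\ldots,A_{i_d}\Delta\eta')| \geq c$, so $\Delta\eta = \sum_{m=1}^d \lambda_m A_{i_m}\Delta\eta$ with $|\lambda_m| \lesssim 1$. Taking the corresponding linear combination of the quadratic conditions yields
\begin{equation*}
\langle \tfrac{h}{2}\Delta\eta + \ell',\, \Delta\eta \rangle = \mathcal{O}(r^{-2}),
\end{equation*}
i.e. $\tfrac{h}{2}|\Delta\eta|^2 + \langle \ell', \Delta\eta\rangle = \mathcal{O}(r^{-2})$. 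Since $|h| \ll \sigma^2$ and $|\Delta\eta| \gg r^{-1}\sigma^{-1}$ imply $|h||\Delta\eta| \ll \sigma^2 \cdot |\Delta\eta| $ is not directly conclusive, I would instead argue as follows: from $\ell = h\Delta\eta + \ell' + \mathcal{O}(r^{-2})$ and $|\ell|,|\ell'| \sim r^{-1}\sigma$ we get $|h\,\Delta\eta| \lesssim r^{-1}\sigma$, hence $|h||\Delta\eta|^2 \lesssim r^{-1}\sigma|\Delta\eta|$; but also $|\langle\ell',\Delta\eta\rangle| \lesssim r^{-1}\sigma|\Delta\eta|$, so both sides of the scalar identity are $\lesssim r^{-1}\sigma|\Delta\eta|$ — consistent but not yet sharp. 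The actual content comes from rewriting $\ell = \ell' + h\Delta\eta + \mathcal{O}(r^{-2})$ componentwise: for each fixed $i$, $\ell_i - \ell_i' = h(\Delta\eta)_i + \mathcal{O}(r^{-2})$. The claim $\ell_i \sim \mu r^{-1}\sigma \Rightarrow \ell_i' \sim -\mu r^{-1}\sigma$ is thus equivalent to showing $h(\Delta\eta)_i \sim -2\mu r^{-1}\sigma$ (or more precisely of the same order and opposite sign), i.e. that the increment $h\Delta\eta$ is comparable to $r^{-1}\sigma$ in that coordinate and of the correct sign.

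To pin this down I would exploit that $\omega \in \Omega_\sigma$, meaning $\omega \notin \bigcup \mathbf{CP}_{\sigma/2}$: by Lemma \ref{lem:RepI} (the $|h|\ll\sigma^2$ clause), if $|\ell'| \ll r^{-1}\sigma$ then $\omega \in \bigcup\mathbf{CP}_{\sigma/2}$, a contradiction; this is why $|\ell'| \sim r^{-1}\sigma$ is forced, and symmetrically for $\ell$. Given $|\ell_i| \sim r^{-1}\sigma$ with sign $\mu$, suppose for contradiction $\ell_i'$ is \emph{not} $\sim -\mu r^{-1}\sigma$: then either $|\ell_i'| \ll r^{-1}\sigma$, or $\ell_i' \sim \mu r^{-1}\sigma$ (same sign). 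In the first sub-case $h(\Delta\eta)_i = \ell_i + \mathcal{O}(r^{-2}) \sim \mu r^{-1}\sigma$, so $|h||\Delta\eta| \gtrsim r^{-1}\sigma$; combined with $|h| \ll \sigma^2$ this gives $|\Delta\eta| \gg r^{-1}\sigma^{-1}$ (consistent) but then from the scalar identity $\tfrac{h}{2}|\Delta\eta|^2 + \langle\ell',\Delta\eta\rangle = \mathcal{O}(r^{-2})$ with $|\ell'|\sim r^{-1}\sigma$ one derives $|h||\Delta\eta|^2 \lesssim r^{-1}\sigma|\Delta\eta| + r^{-2}$, hence $|h||\Delta\eta| \lesssim r^{-1}\sigma$, so actually $|h(\Delta\eta)_i|\lesssim r^{-1}\sigma$ — borderline; one then needs the reverse inclusion argument showing $\omega$ lands in a finer plank. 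In the same-sign sub-case $\ell_i - \ell_i' = \mathcal{O}(r^{-1}\sigma)$ could be much smaller than $r^{-1}\sigma$, forcing $|h(\Delta\eta)_i| \ll r^{-1}\sigma$; iterating the representation via Lemma \ref{lem:RepI} then places $\omega$ in $\mathbf{CP}_{\sigma/2}$, contradicting $\omega\in\Omega_\sigma$. The main obstacle I anticipate is precisely this case analysis on the signs and sizes: making rigorous that the only configuration compatible with $\omega\in\Omega_\sigma$ and $|\Delta\eta|\gg r^{-1}\sigma^{-1}$ is the opposite-sign ``ends'' configuration, which requires carefully tracking the $\mathcal{O}(r^{-2})$ errors relative to $r^{-1}\sigma$ (valid since $\sigma \gg r^{-1}$) and invoking Lemma \ref{lem:RepI} in the contrapositive direction to exclude the degenerate alternatives.
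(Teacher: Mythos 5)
Your Taylor expansion and the extraction of the tangential condition $\ell = h\,\Delta\eta + \ell' + \mathcal{O}(r^{-2})$ and the quadratic conditions $\langle \tfrac{h}{2}\Delta\eta + \ell',\, A_m\,\Delta\eta\rangle = \mathcal{O}(r^{-2})$ match the paper's proof exactly. The gap is in what you draw from the transversality assumption. You only take the one linear combination reproducing $\Delta\eta$, which yields the scalar relation $\langle \tfrac{h}{2}\Delta\eta + \ell',\,\Delta\eta\rangle = \mathcal{O}(r^{-2})$ and controls merely the component of $v := \tfrac{h}{2}\Delta\eta + \ell'$ along $\Delta\eta$. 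But Assumption \ref{ass:SpanCondition} gives you $d$ vectors $A_{i_1}\Delta\eta',\ldots,A_{i_d}\Delta\eta'$ (with $\Delta\eta' = \Delta\eta/|\Delta\eta|$) that span $\mathbb{R}^d$ with a uniform lower bound on the determinant, so any vector and in particular $v$ itself can be written as a bounded linear combination $v = \sum_m a_m A_{i_m}\Delta\eta'$ with $|a_m| \lesssim |v|$. Pairing this with the $d$ quadratic conditions gives $|v|^2 \lesssim |v| \cdot r^{-2}/|\Delta\eta|$, i.e.
\begin{equation*}
\Big| \tfrac{h}{2}\Delta\eta + \ell' \Big| \, |\Delta\eta| = \mathcal{O}(r^{-2}),
\end{equation*}
a \emph{modulus} estimate, not just a scalar-product estimate. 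This is the step you lose, and it is precisely why your later case analysis feels ``borderline'' and needs vague appeals to a ``reverse inclusion argument.''

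With the modulus estimate in hand the proof closes in two lines, as in the paper: from $|\Delta\eta| \gg r^{-1}\sigma^{-1}$ you get $|\tfrac{h}{2}\Delta\eta + \ell'| \ll r^{-1}\sigma$; combining with the tangential relation $\ell - h\Delta\eta - \ell' = \mathcal{O}(r^{-2})$ and $\sigma \gg r^{-1}$ then gives $|\tfrac{h}{2}\Delta\eta - \ell| \ll r^{-1}\sigma$ as well. Hence $\ell' = -\ell + o(r^{-1}\sigma)$, and since $|\ell_i| \sim r^{-1}\sigma$ this forces $\ell_i' \sim -\mu\, r^{-1}\sigma$ whenever $\ell_i \sim \mu\, r^{-1}\sigma$. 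Your whole componentwise/sign-case detour and the proposed contrapositive invocation of Lemma \ref{lem:RepI} become unnecessary once the full modulus bound on $v$ is extracted.
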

\begin{proof}
Let $\eta' = \eta + \Delta \eta$. We carry out a Taylor expansion on the right-hand side of the representation:
\begin{equation*}
\begin{split}
h \mathbf{c}(\eta) + \sum_{i=1}^d \ell_i \mathbf{t}_i(\eta) &+ \sum_{j=1}^l C_j \mathbf{n}_j(\eta) = h \mathbf{c}(\eta') + \sum_{i=1}^d \ell_i' \mathbf{t}_i(\eta') + \sum_{j=1}^l C_j' \mathbf{n}_j(\eta') \\
&\; = h \mathbf{c}(\eta) + \sum_{i=1}^d h (\Delta \eta)_i \mathbf{t}_i(\eta) + \sum_{i,j=1}^d \frac{h}{2} (\Delta \eta)_i (\Delta \eta)_j \partial_j \mathbf{t}_i(\eta) \\
&\quad + \sum_{i=1}^d \ell_i' \mathbf{t}_i(\eta) + \sum_{i,j=1}^d \ell_i' (\Delta \eta)_j \partial_j \mathbf{t}_i(\eta) + \mathcal{O}(r^{-2}).
\end{split}
\end{equation*}
We separate the first $d$ components to find the conditions:
\begin{equation}
\label{eq:ConditionsEndsOverlap}
\left\{ \begin{array}{cl}
h \Delta \eta + \ell' &= \ell + \mathcal{O}(r^{-2}), \\
\sum_{i,j=1}^d [ \frac{h}{2} (\Delta \eta)_i (\Delta \eta)_j + \ell_i' (\Delta \eta)_j ] \partial_i \mathbf{t}_j(\eta) &= \mathcal{O}(r^{-2}). 
\end{array} \right.
\end{equation}

The same reasoning as in the proof of Lemma \ref{lem:RepII} shows that by the transversality assumption \eqref{eq:SpanCondition} the second identity leads us to the condition:
\begin{equation*}
\big| \frac{h \Delta \eta}{2} + \ell' \big| |\Delta \eta| = \mathcal{O}(r^{-2}).
\end{equation*}
By the assumption $|\Delta \eta| \gg r^{-1} \sigma^{-1}$, we find
\begin{equation*}
\big| \frac{h \Delta \eta}{2} + \ell' \big| \ll r^{-1} \sigma.
\end{equation*}
Consequently, by the first condition in \eqref{eq:ConditionsEndsOverlap} and $\sigma \gg r^{-1}$,
\begin{equation*}
\big| \frac{h \Delta \eta}{2} + \ell' + \frac{h \Delta \eta}{2} - \ell \big| = \mathcal{O}(r^{-2}) \Rightarrow \big| \frac{h \Delta \eta}{2} - \ell \big| \ll r^{-1} \sigma.
\end{equation*}
The claim follows by projecting to the $i$th component.
\end{proof}

\begin{proof}[Proof~of~Lemma~\ref{lem:FiniteOverlapCP}]
Notably, for $\sigma \lesssim r^{-1}$ we have $\# \mathbf{CP}_{\sigma} = \mathcal{O}(1)$. For $r^{-1} \ll \sigma $ we distinguish between $|h| \sim \sigma^2$ and $|h| \ll \sigma^2$.

We turn to $|h| \ll \sigma^2$ first. In this case the finite overlap of $\Theta(\sigma,\eta)$ follows from $\tilde{\theta}(\xi) \cap \Omega_{\sigma}$ being comprised of the ``ends": As a consequence of Lemma \ref{lem:RepII}
$\omega \in \Omega_{\sigma} \cap \{ \omega_{n+1} = h \} \cap \tilde{\theta}(\xi)$ admits only the representation:
\begin{equation*}
p = h \mathbf{c}(\eta) + \sum_{i=1}^d \ell_i \mathbf{t}_i(\eta) + \sum_{j=1}^l c_j \mathbf{n}_j(\eta)
\end{equation*}
for $|\ell| \sim r^{-1} \sigma$ and $|c_j| \lesssim r^{-2}$. This means that $\Omega_{\sigma}$ consists essentially of the ends of $\Theta(\sigma,\eta)$. Let
\begin{equation*}
\mathcal{M} = \{ \eta \in \mathcal{R}(r^{-2} \sigma^{-2}) : p \in \Theta(\sigma,\eta) \}
\end{equation*}
denote the collection of $\eta$ such that $p \in \Theta(\sigma,\eta)$. For $\eta \in \mathcal{M}$ let
\begin{equation*}
p = h \mathbf{c}(\eta) + \sum_{i=1}^d \ell_i(p,\eta) \mathbf{t}_i(\eta) + \sum_{j=1}^l c_j(p,\eta) \mathbf{n}_j(\eta)
\end{equation*}
denote the unique representation of $p \in \Theta(\sigma,\eta)$.  We sort the collection into
\begin{equation*}
\mathcal{M}_{i,\mu} = \{ \eta \in \mathcal{M} \, : \, \ell_i(p,\eta) \sim \mu r^{-1} \sigma \}
\end{equation*}
with $i=1,\ldots,d$, $\mu \in \{ -1 ; +1 \}$. By
\begin{equation*}
\mathcal{M} = \bigcup_{\substack{i=1,\ldots,d, \\ \mu \in \{-1;+1\}}} \mathcal{M}_{i,\mu}
\end{equation*}
it suffices to show $\# \mathcal{M}_{i,\mu} = \mathcal{O}(1)$ for any $i\in \{1,\ldots,d\}$ and $\mu \in \{\pm 1\}$.

Consider $i,\mu$, for which $\mathcal{M}_{i,\mu} \neq \emptyset$ and $\eta_1,\eta_2 \in \mathcal{M}_{i,\mu}$. The identity
\begin{equation*}
\begin{split}
&\quad h \mathbf{c}(\eta_1) + \sum_{i=1}^d \ell_i(p,\eta_1) \mathbf{t}_i(\eta_1) + \sum_{j=1}^l c_j(p,\eta_1) \mathbf{n}_j(\eta_1) \\
&= h \mathbf{c}(\eta_2) + \sum_{i=1}^d \ell_i(p,\eta_2) \mathbf{t}_i(\eta_2) + \sum_{j=1}^l c_j(p,\eta_2) \mathbf{n}_j(\eta_2)
\end{split}
\end{equation*}
and the fact that $\ell_i(p,\eta_j) \sim \mu r^{-1} \sigma$, $j=1,2$ implies together with Lemma \ref{lem:DoubleEndsOverlap} that $|\eta_1-\eta_2| \lesssim r^{-1} \sigma^{-1}$. Since the distance for distinct $\eta_1,\eta_2 \in \mathcal{R}(r^{-2} \sigma^{-2})$ is comparable to $r^{-1} \sigma^{-1}$ this shows that $\# \mathcal{M}_{i,\mu} = \mathcal{O}(1)$ as claimed. The proof in case $|h| \ll \sigma^2$ is complete.


It remains to check the case $|h| \sim \sigma^2$. With the finite overlap of $\mathbf{CP}_{\sigma}$ for $|h| \sim \sigma^2$, which follows from the centered planks $\mathbf{CP}_{\sigma}$ at height $|h| \sim \sigma^2$ comprising a canonical covering of the $r^{-1} \sigma$-neighborhood of $\{ h \mathbf{c}(\xi) : |\xi| \leq 1 \}$, we can establish Lemma \ref{lem:FiniteOverlapCP}.
\end{proof}

We are ready to complete the proof of Proposition \ref{prop:KakeyaEstimateConical}:

\begin{proof}[Proof~of~Proposition~\ref{prop:KakeyaEstimateConical}]
Decompose the Fourier support of $|f|^2$ into $\Omega_{\sigma}$ like in \eqref{eq:DecompositionSFFourierSpace}.
Next, sort $\theta$ in \eqref{eq:DecompositionSFFourierSpace} into centered planks of scale $\sigma$, writing $\theta(\xi) \in \Theta(\sigma,\eta)$ for $|\xi - \eta| \leq 4 r^{-1} \sigma^{-1}$. For $\omega \in \Omega_{\sigma}$ we have
\begin{equation*}
\big| \sum_{\theta \in \Theta_{r^{-2}}} \widehat{|f_{\theta}|^2}(\omega) \big|^2 \leq \big| \sum_{\Theta(\sigma,\eta) \in \mathbf{CP}_{\sigma}} \big| \sum_{\theta \in \Theta(\sigma,\eta)} \widehat{|f_{\theta}|^2}(\omega) \big| \big|^2.
\end{equation*}
Invoking Corollary \ref{cor:SortingPlanks} only the $\Theta(\sigma,\eta)$ are contributing, for which $\omega \in 10 \Theta(\sigma,\eta)$:
\begin{equation*}
\big| \sum_{\Theta(\sigma,\eta) \in \mathbf{CP}_{\sigma}} \big| \sum_{\theta \in \Theta(\sigma,\eta)} \widehat{|f_{\theta}|^2}(\omega) \big| \big|^2 \leq \big| \sum_{\substack{\Theta(\sigma,\eta) \in \mathbf{CP}_{\sigma}, \\ \omega \in 10 \Theta(\sigma,\eta)}} \big| \sum_{\theta \in \Theta(\sigma,\eta)} \widehat{|f_{\theta}|^2}(\omega) \big| \big|^2.
\end{equation*}
At this point, by Lemma \ref{lem:FiniteOverlapCP} we can apply Cauchy-Schwarz without essential loss:
\begin{equation*}
\int_{\Omega_{\sigma}} \big| \sum_{\substack{\Theta(\sigma,\xi) \in \mathbf{CP}_{\sigma}, \\ \omega \in 10 \Theta(\sigma,\xi)}} \big| \sum_{\theta \in \Theta(\sigma,\xi)} \widehat{|f_{\theta}|^2}(\omega) \big| \big|^2 d\omega \lesssim \int_{\Omega_{\sigma}} \sum_{\substack{\Theta(\sigma,\xi) \in \mathbf{CP}_{\sigma}, \\ \omega \in 10 \Theta(\sigma,\xi)}} \big| \sum_{\theta \in \Theta(\sigma,\xi)} \widehat{|f_{\theta}|^2}(\omega) \big|^2 d\omega.
\end{equation*}
This is further estimated by Plancherel's theorem:
\begin{equation*}
\int_{\Omega_{\sigma}} \sum_{\substack{\Theta(\sigma,\xi) \in \mathbf{CP}_{\sigma}, \\ \omega \in 10 \Theta(\sigma,\xi)}} \big| \sum_{\theta \in \Theta(\sigma,\xi)} \widehat{|f_{\theta}|^2}(\omega) \big|^2 d\omega \lesssim \sum_{\Theta(\sigma,\xi) \in \mathbf{CP}_{\sigma}} \int_{\R^{n+1}} \big| \sum_{\theta \in \Theta(\sigma,\xi)} |f_{\theta}|^2 (x) |^2 dx.
\end{equation*}
Now we can perceive $\theta \in \Theta(\sigma,\xi)$ as $\theta \subseteq 10 \tau$ for $\tau \in \Theta_{(r \sigma)^{-2}}$:
\begin{equation*}
\sum_{\Theta(\sigma,\xi) \in \mathbf{CP}_{\sigma}} \int_{\R^{n+1}} \big| \sum_{\theta \in \Theta(\sigma,\xi)} |f_{\theta}|^2 (x) |^2 dx \lesssim \sum_{\tau \in \Theta_{(\sigma r)^{-2}}} \int_{\R^{n+1}} \big| \sum_{\theta \subseteq 10 \tau} |f_{\theta}|^2(x) \big|^2 dx.
\end{equation*}
Plugging this into \eqref{eq:DecompositionSFFourierSpace} and redenoting $s = (r \sigma)^{-1}$ we find
\begin{equation*}
\eqref{eq:DecompositionSFFourierSpace} \lesssim \sum_{r^{-1} \leq s \leq 1} \sum_{\tau \in \Theta_{s^2}} \int_{\R^{n+1}} \big| \sum_{\theta \subseteq 10 \tau} |f_{\theta}|^2 (x) \big|^2  dx.
\end{equation*}
As the final step in the proof, we invoke the uncertainty principle. Note that the Fourier support of $\sum_{\theta \subseteq 10 \tau} |f_{\theta}|^2$ is contained in $U_{\tau,r^{2}}$ defined in \eqref{eq:PolarSetKakeyaSorting}.

We tile $\R^{n+1}$ with translates $U$ of $U_{\tau,r^2}$, denoted by $U \parallel U_{\tau,r^2}$, and take advantage of the fact that $|S_U f|$ is essentially constant on $U$. We conclude
\begin{equation*}
\begin{split}
\int_{\R^{n+1}} \big| \sum_{\theta \in \Theta_{r^{-2}}} |f_{\theta}|^2 \big|^2 &\lesssim \sum_{r^{-1} \leq s \leq 1} \sum_{\tau \in \Theta_{s^2}} \sum_{U \parallel U_{\tau,r^2}} \int_U \big| \sum_{\theta \subseteq 10 \tau} |f_{\theta}|^2(x) \big|^2 dx \\
&\lesssim \sum_{r^{-1} \leq s \leq 1} \sum_{\tau \in \Theta_{s^2}} \sum_{U \parallel U_{\tau,r^2}} |U|^{-1} \| S_U f \|^4_{L^2(U)}.
\end{split}
\end{equation*}
\end{proof}

\subsection{Induction on scales}

With the Kakeya, or wave envelope estimate, at hand, the proof of the square function estimate is carried out making use of the following two-scale-quantity:
\begin{definition}
Let $f \in \mathcal{S}(\R^{n+1})$ with $\text{supp}(\hat{f}) \subseteq \mathcal{N}_{R^{-1}}(\mathcal{C} M_Q)$. We define $S(r,R)$ as smallest constant such that
\begin{equation*}
\sum_{B_r \subseteq \R^{n+1}} |B_r|^{-1} \| S_{B_r} f \|^4_{L^2(B_r)} \leq S(r,R) \sum_{R^{-1} \leq s \leq 1} \sum_{\tau \in \Theta_{s}} \sum_{U \parallel U_{\tau,R}} |U|^{-1} \| S_U f \|^4_{L^2(U)}.
\end{equation*}
\end{definition}

Theorem \ref{thm:SquareFunctionConical} is a consequence of the following:
\begin{proposition}
\label{prop:WaveEnvelopeEstimate}
It holds
\begin{equation*}
S(1,R) \lesssim_\varepsilon R^\varepsilon.
\end{equation*}
\end{proposition}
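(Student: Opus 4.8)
The plan is to prove Proposition~\ref{prop:WaveEnvelopeEstimate} by induction on the frequency scale $R$, bootstrapping from the Kakeya (wave envelope) estimate of Proposition~\ref{prop:KakeyaEstimateConical}. The base case $R \lesssim 1$ is trivial since then there is only $\mathcal{O}(1)$ worth of caps and balls. For the inductive step, fix a large scale $R$ and an intermediate scale $r$ with $1 \ll r \ll R$ (to be chosen as a small power of $R$ at the end, so that the accumulated constant is $R^{O(\varepsilon)}$). The key structural fact is the multiplicativity/transitivity of the two-scale quantity: one should show
\begin{equation*}
S(1,R) \lesssim S(1,r) \cdot \sup_{\tau} S_\tau(r,R),
\end{equation*}
where $S_\tau(r,R)$ is the analogous constant for the function $f$ restricted to a single cap $\tau \in \Theta_{r^{-2}}$ of the coarser decomposition, after parabolic rescaling. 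The rescaling step is where Assumption~\ref{ass:SpanCondition} is used again: the affine map that takes $10\tau$ back to the unit-scale conical neighborhood preserves the class of quadratic manifolds satisfying the span condition (with the same transversality constant $c$, since $c$ is affine-invariant under the relevant normalization), so $S_\tau(r,R)$ is controlled by $S(1, R/r)$ (up to the parabolic relation between scales). This closes the induction: writing $S(1,R) \leq C\, S(1,r)\, S(1,R/r)$ and iterating $\log R / \log r$ times gives $S(1,R) \le C^{\log R/\log r}$, which is $\le R^\varepsilon$ once $r = R^{C/\varepsilon}$.

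The mechanism that yields the transitivity estimate is a two-step unwinding of the definitions. First, one inserts the Kakeya estimate at the intermediate scale: Proposition~\ref{prop:KakeyaEstimateConical}, applied at scale $r$ inside each $r$-ball $B_r$, reorganizes $\sum_{\theta \in \Theta_{R^{-1}}}|f_\theta|^2$ on $B_r$ into contributions indexed by intermediate caps $\tau$ and their polar sets; more precisely one wants a \emph{localized} version of Proposition~\ref{prop:KakeyaEstimateConical}, proved identically by inserting a smooth cutoff adapted to $B_r$ and tracking that the Fourier supports $\tilde\theta$ and their sortings into centered planks are unaffected at scales $\gtrsim r^{-1}$. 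Second, for each fixed $\tau$ one applies the square function estimate for the cap $f_\tau = \sum_{\theta \subseteq 10\tau} f_\theta$: after parabolic rescaling, $f_\tau$ has Fourier support in the $\delta'$-neighborhood of a rescaled conical quadratic manifold (again satisfying the span condition), so the inductive hypothesis $S(1,R/r) \lesssim_\varepsilon (R/r)^\varepsilon$ applies to it. Combining the two steps and summing over the (boundedly overlapping, by the geometry of the $U_{\tau,R}$) decompositions produces the claimed recursive inequality. One must be slightly careful that the intermediate-scale wave envelopes $U_{\tau, r^2}$ and the fine-scale ones $U_{\tau', R}$ nest compatibly; this is where the scale bookkeeping $s \mapsto (r\sigma)^{-1}$ from the proof of Proposition~\ref{prop:KakeyaEstimateConical} is reused, and the various $\|S_U f\|_{L^2}^4$ weights on the right-hand side match up because $S_U$ at a coarse scale dominates (pointwise, after Cauchy--Schwarz and using finite overlap) the sum of the $S_{U'}$ at the finer scale.

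Finally, Proposition~\ref{prop:WaveEnvelopeEstimate} implies Theorem~\ref{thm:SquareFunctionConical}: taking $r=1$ in the definition of $S(1,R)$, the left-hand side $\sum_{B_1}|B_1|^{-1}\|S_{B_1}f\|_{L^2(B_1)}^4$ dominates $\|(\sum_\theta |f_\theta|^2)^{1/2}\|_{L^4}^4$ up to a constant (since $|B_1|\sim 1$ and $S_{B_1}f$ is essentially constant on unit balls by the uncertainty principle, the Fourier support of $\sum_\theta |f_\theta|^2$ being contained in a unit ball), while the right-hand side, summed over $s$ and $\tau$ and $U$, is comparable to $\|(\sum_\theta |f_\theta|^2)^{1/2}\|_{L^4}^4$ after undoing the wave-envelope reorganization and again invoking finite overlap together with Cauchy--Schwarz; combined with $L^4 = \|f\|_{L^4}^4 = \|\sum_\theta f_\theta\|_{L^4}^4$ controlled by $\|\sum_\theta |f_\theta|^2\|_{L^2}^2$ in the standard way, this gives the stated $\delta^{-\varepsilon}$ bound with $\delta = R^{-1}$.

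\textbf{Main obstacle.} I expect the principal difficulty to be establishing the localized Kakeya estimate and, within it, the clean nesting of the coarse wave envelopes $U_{\tau,r^2}$ inside the fine ones $U_{\tau',R}$ together with the verification that the $L^2$-weights telescope correctly through the induction; the parabolic-rescaling invariance of Assumption~\ref{ass:SpanCondition} (needed so the inductive hypothesis applies to each rescaled cap) is the other point requiring care, but it should follow from the affine-invariant formulation of the determinant condition \eqref{eq:SpanCondition}.
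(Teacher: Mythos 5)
Your skeleton matches the paper's: a two-scale multiplicativity inequality obtained by parabolic rescaling (the paper's generalized Lorentz transformation, Lemma \ref{lem:TwoScaleInequality}), the Kakeya estimate of Proposition \ref{prop:KakeyaEstimateConical} localized to give $S(r,r^2)\lesssim 1$ (Lemma \ref{lem:LocalKakeyaEstimate}), and an induction on scales; the invariance of Assumption \ref{ass:SpanCondition} under the rescaling is also correctly identified.

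However, there is a genuine gap in how you close the induction. Iterating $S(1,R)\le C\,S(1,r)\,S(1,R/r)$ requires a bound on $S(1,r)$ at the intermediate scale, and the only bound your ingredients supply there is the trivial one, $S(1,r)\lesssim r^{O(1)}$ (Cauchy--Schwarz over the $r^{O(1)}$ caps); the square-scale Kakeya estimate only controls $S(r,r^2)$, never $S(1,r)$ for large $r$. Feeding the trivial bound into the recursion produces $S(1,R)\lesssim R^{O(1)}$, not $R^\varepsilon$: with $r$ a constant the accumulated factor is $(C\,S(1,r))^{\log R/\log r}=R^{(\log C+\log S(1,r))/\log r}$, and since $\log S(1,r)\gtrsim \log r$ the exponent never becomes small. (Your choice ``$r=R^{C/\varepsilon}$'' is also not meaningful, as that exceeds $R$.) The paper closes this gap with a separate ingredient you omit entirely: the induction start $S_K(1,K)\lesssim_\delta K^\delta$ (Proposition \ref{prop:InductionStart}) at the growing intermediate scale $K=R^{\delta}$, which does not follow from Proposition \ref{prop:KakeyaEstimateConical} but is proved by a bilinear Kakeya estimate (using quantitative transversality of tangent spaces via a wave packet decomposition) combined with a Bourgain--Guth broad--narrow argument and the biorthogonality underlying Theorem \ref{thm:SquareFunctionQuadraticManifold}. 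This also forces the height localization to $K^{-1}$-intervals, i.e., the modified quantity $S_K$ with $S(r,R)\lesssim K\,S_K(r,R)$, absent from your proposal. With $S_K(r,K)\lesssim K^{\delta}$, $\delta\ll\varepsilon$, in hand, the two regimes $r\le K^{1/2}$ (jump to scale $K$ using the induction start) and $r\ge K^{1/2}$ (jump to scale $r^2$ using $S_K(r,r^2)\lesssim 1$, where $\log(r)\,r^{-\varepsilon}\ll 1$) close the induction; without it, neither regime does.
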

Before we turn to the proof of the wave envelope estimate, we argue how it implies the square function estimate:
\begin{proof}[Proof~of~(Proposition~\ref{prop:WaveEnvelopeEstimate}~$\Rightarrow$~Theorem~\ref{thm:SquareFunctionConical})]
By the essentially constant property it holds for $f \in \mathcal{S}(\R^{n+1})$ with $\text{supp}(\hat{f}) \subseteq \mathcal{N}_{R^{-1}}(\mathcal{C} M_Q) \subseteq B_{n+1}(0,1)$ that
\begin{equation*}
\| f \|_{L^4(\R^{n+1})}^4 \lesssim \sum_{B_1 \in \mathcal{B}_1} |B_1|^{-1} \| S_{B_1} f \|^4_{L^2(B_1)}.
\end{equation*}

Applying the hypothesis yields
\begin{equation*}
\| f \|_{L^4(\R^{n+1})} \lesssim_\varepsilon R^\varepsilon \sum_{R^{-1} \leq s \leq 1} \sum_{\tau \in \Theta_{s}} \sum_{U \parallel U_{\tau,R}} |U|^{-1} \| S_U f \|^4_{L^2(U)}.
\end{equation*}
An application of the Cauchy-Schwarz inequality yields
\begin{equation*}
\sum_{U \parallel U_{\tau,R}} |U|^{-1} \big( \int_U \sum_{\theta \subseteq 10 \tau} |f_{\theta}|^2 \big)^2 \lesssim \int_{\R^{n+1}} \big( \sum_{\theta \subseteq 10 \tau} |f_{\theta}|^2 \big)^2.
\end{equation*}

Next, applying the embedding $\ell^1 \hookrightarrow \ell^2$ gives
\begin{equation*}
\int_{\R^{n+1}} \sum_{\tau \in \Theta_{s}} \big( \sum_{\theta \subseteq 10 \tau} |f_{\theta}|^2 \big)^2 \lesssim \int_{\R^{n+1}} \big( \sum_{\theta \in \Theta_R} |f_{\theta}|^2 \big)^2.
\end{equation*}
Carrying out the dyadic sum over $s$ incurs a logarithmic loss in $R$, which yields, subsuming the above estimates,
\begin{equation*}
R^{\varepsilon} \sum_{R^{-1} \leq s \leq 1} \sum_{\tau \in \Theta_{s}} \sum_{U \parallel U_{\tau,R}} |U|^{-1} \| S_U f \|^4_{L^2(U)} \lesssim R^\varepsilon \log(R) \int_{\R^{n+1}} \big( \sum_{\theta \in \Theta_R} |f_{\theta}|^2 \big)^2 dx.
\end{equation*}
The proof is complete.

\end{proof}

In the proof of Proposition \ref{prop:WaveEnvelopeEstimate} the following reformulation of Proposition \ref{prop:KakeyaEstimateConical}, making use of the locally constant property, plays an important role:
\begin{lemma}
\label{lem:LocalKakeyaEstimate}
Let $1 \leq r \leq r^2 \leq R$. It holds
\begin{equation*}
S(r,r^2) \lesssim 1.
\end{equation*}
\end{lemma}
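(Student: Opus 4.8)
The plan is to derive Lemma~\ref{lem:LocalKakeyaEstimate} from Proposition~\ref{prop:KakeyaEstimateConical} by rescaling. Proposition~\ref{prop:KakeyaEstimateConical} is stated for functions with Fourier support in $\mathcal{N}_{r^{-2}}(\Gamma_c)$ and reads
\begin{equation*}
\int_{\R^{n+1}} \big| \sum_{\theta \in \Theta_{r^{-2}}} |f_{\theta}|^2 \big|^2 \lesssim \sum_{r^{-1} \leq s \leq 1} \sum_{\tau \in \Theta_{s^2}} \sum_{U \parallel U_{\tau,r^2}} |U|^{-1} \| S_U f \|^4_{L^2(U)},
\end{equation*}
which is essentially the claim of Lemma~\ref{lem:LocalKakeyaEstimate} with the outermost cap scale being $1$ (i.e.\ $B_r = B_1$) modulo the uncertainty-principle step. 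To get the general statement $S(r,r^2) \lesssim 1$ we note that $S(r,r^2)$ compares the left-hand side of the definition, with $r$-balls on the physical side and the canonical $r^{-2}$-caps $\Theta_{r^{-2}}$ (here $R = r^2$), against wave-envelope sums at all intermediate scales $r^{-1} \leq s \leq 1$. So what must be shown is precisely that Proposition~\ref{prop:KakeyaEstimateConical} survives localization of the left-hand side to $r$-balls with the $S_{B_r}$ square function in place of the full square function.

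The key steps I would carry out are as follows. First, recall that the Fourier support of $\sum_{\theta \in \Theta_{r^{-2}}} |f_\theta|^2$ lies in $\bigcup \mathbf{CP}_1$, which after projecting away the conical direction is contained in a ball of radius $O(r^{-1})$ in the tangential directions and $O(r^{-2})$ in the normal directions; in particular it is contained in a ball of radius $O(1)$, but more to the point, on any ball $B_r$ of radius $r$ the function $\sum_\theta |f_\theta|^2$ is morally constant at the relevant scale only after we account for the $O(1)$-sized conical spread. The cleaner route is: by the locally constant property, $|S_{B_r} f|$ is (up to rapidly decaying tails) the average of $(\sum_{\theta\in\Theta_{r^{-2}}}|f_\theta|^2)^{1/2}$ over $B_r$, so
\begin{equation*}
\sum_{B_r} |B_r|^{-1} \| S_{B_r} f \|_{L^2(B_r)}^4 \lesssim \sum_{B_r} |B_r|^{-1} \Big( \int_{B_r} \sum_{\theta} |f_\theta|^2 \Big)^2 \lesssim \int_{\R^{n+1}} \big| \sum_{\theta \in \Theta_{r^{-2}}} |f_\theta|^2 \big|^2,
\end{equation*}
the last inequality being Cauchy--Schwarz in $x$ together with the bounded overlap of the $B_r$. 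Second, apply Proposition~\ref{prop:KakeyaEstimateConical} verbatim to bound the right-hand side by $\sum_{r^{-1} \leq s \leq 1} \sum_{\tau \in \Theta_{s^2}} \sum_{U \parallel U_{\tau,r^2}} |U|^{-1} \| S_U f \|^4_{L^2(U)}$. Third, match notation: in the definition of $S(r,R)$ with $R = r^2$ the inner sum runs over $s \in [R^{-1},1] = [r^{-2},1]$ and $\tau \in \Theta_s$, whereas Proposition~\ref{prop:KakeyaEstimateConical} produces $s \in [r^{-1},1]$ and $\tau \in \Theta_{s^2}$; reindexing $s \mapsto s^2$ identifies the two up to the harmless relabelling already used at the end of the proof of Proposition~\ref{prop:KakeyaEstimateConical} (there $s = (r\sigma)^{-1}$), and the sum over the coarser range $[r^{-1},1]$ is dominated by the sum over the finer range $[r^{-2},1]$ since extra scales only add nonnegative terms. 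Putting these together gives the desired constant $\lesssim 1$, independent of $r$ and $R$.

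The main obstacle, and the only place requiring genuine care, is the first step: justifying that passing from the localized square function $S_{B_r} f$ on $r$-balls to the global $L^2$ quantity $\int |\sum_\theta |f_\theta|^2|^2$ loses only a constant. This is the ``uncertainty principle'' input, and it is legitimate because each $|f_\theta|^2$ has Fourier support in a translate of $\tilde\theta$, whose tangential extent is $O(r^{-1})$ — dual to the scale $r$ — so $|f_\theta|^2$, and hence $\sum_\theta |f_\theta|^2$ and its square, is essentially constant on each $B_r$ up to Schwartz tails. One has to phrase this with a fixed bump function adapted to $B_r$ and absorb the tails by the usual argument (replacing $\chi_\theta$ by a slightly fattened version and summing a rapidly convergent series over translates), exactly as in the final paragraph of the proof of Proposition~\ref{prop:KakeyaEstimateConical}. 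Everything downstream — the application of Proposition~\ref{prop:KakeyaEstimateConical} and the bookkeeping of scales — is then purely formal.
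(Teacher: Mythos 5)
Your proposal is correct and follows exactly the route the paper intends: the paper states this lemma as a reformulation of Proposition~\ref{prop:KakeyaEstimateConical} via the locally constant property, which is precisely your chain (localize, apply the Kakeya estimate, reindex $s\mapsto s^2$ with $R=r^2$). The only remark worth making is that your first step needs no uncertainty-principle input at all: $\|S_{B_r}f\|_{L^2(B_r)}^2=\int_{B_r}\sum_\theta|f_\theta|^2$ is an identity, so the passage to $\int|\sum_\theta|f_\theta|^2|^2$ is just Cauchy--Schwarz plus bounded overlap of the $B_r$.
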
 

Moreover, we have the following inequality which relates two different scales via a generalized Lorentz transformation:
\begin{lemma}
\label{lem:TwoScaleInequality}
Let $1 \leq r_1 \leq r_2 \leq r_3$. Then the following estimate holds:
\begin{equation}
\label{eq:TwoScaleInequality}
S(r_1,r_3) \lesssim \log(r_2) S(r_1,r_2) \max_{s \in [r_2^{-1},1]} S(s r_2, s r_3).
\end{equation}
\end{lemma}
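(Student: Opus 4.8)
The plan is to run a bootstrapping / broad-narrow style argument in which the intermediate scale $r_2$ is used to split the sum over $\tau\in\Theta_s$ on the right-hand side of the definition of $S(r_1,r_3)$ into a part coming from ``coarse'' caps (scales $s\in[r_2^{-1},1]$) and a part coming from ``fine'' caps (scales $s\in[r_3^{-1},r_2^{-1}]$). First I would start from the definition of $S(r_1,r_3)$: it controls $\sum_{B_{r_1}}|B_{r_1}|^{-1}\|S_{B_{r_1}}f\|_{L^2(B_{r_1})}^4$ by the wave-envelope sum at scale $R=r_3$. The idea is to insert the scale-$r_2^2$ wave envelopes as an intermediate layer, i.e.\ first pass from scale $r_1$ to scale $r_2$ using $S(r_1,r_2)$ (this is legitimate because the relevant portion of $\hat f$ restricted to a single plank $\tau$ of scale $s\geq r_2^{-1}$ is, after a generalized Lorentz rescaling, Fourier supported in an $r_2^2$-neighborhood of a rescaled copy of $\mathcal CM_Q$), and then invoke $S(sr_2,sr_3)$ on each rescaled piece to descend to scale $r_3$.

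The key steps, in order, would be: (1) Apply the locally-constant/essentially-constant property to replace $S_{B_{r_1}}f$ on each ball $B_{r_1}$ by the corresponding wave envelope quantity, so that the left-hand side of \eqref{eq:TwoScaleInequality} is literally the defining left-hand side for $S(r_1,r_3)$. (2) For each intermediate scale $s\in[r_2^{-1},1]$ and each $\tau\in\Theta_s$, perform the generalized Lorentz transformation $L_\tau$ that maps $\tau$ (a slab at scale $s$ in the cone over $M_Q$) to a standard unit-scale piece; under $L_\tau$ the caps $\theta\subseteq 10\tau$ at scale $r_3^{-1}$ become caps at scale $(sr_3)^{-1}$ and the caps at scale $r_2^{-1}$ become caps at scale $(sr_2)^{-1}$ inside the rescaled cone, so $f_\tau\circ L_\tau^{-1}$ is Fourier supported in $\mathcal N_{(sr_3)^{-1}}(\mathcal CM_Q)$. (3) Apply the definition of $S(sr_2, sr_3)$ to $f_\tau\circ L_\tau^{-1}$, undo the rescaling (noting that the Jacobian factors cancel because every term is an $L^2$-normalized wave-envelope density of the form $|U|^{-1}\|S_Uf\|^4_{L^2(U)}$, which is invariant under such volume-preserving-up-to-scalar transformations), and sum over $\tau$. (4) Bundle the resulting innermost sum over $\tau\in\Theta_s$ together using $S(r_1,r_2)$ applied at the coarser scale — this is where the extra $\log(r_2)$ enters, from summing the dyadic scales between $r_2^{-1}$ and $1$ when reorganizing the $\tau$-layers. (5) Take the maximum over $s\in[r_2^{-1},1]$ to pull $\max_s S(sr_2,sr_3)$ out of the sum, leaving $S(r_1,r_2)$ times that maximum times $\log(r_2)$ times the scale-$r_3$ wave-envelope sum, which is exactly the right-hand side of \eqref{eq:TwoScaleInequality}.

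The main obstacle I anticipate is step (2)–(3): verifying that the generalized Lorentz transformation adapted to a plank $\tau$ of the cone $\mathcal CM_Q$ really does map the canonical covering of $\mathcal N_{r_3^{-1}}(\mathcal CM_Q)$ restricted to $10\tau$ onto (a finitely-overlapping version of) the canonical covering of $\mathcal N_{(sr_3)^{-1}}(\mathcal CM_Q)$, and that this transformation preserves (up to a harmless constant, uniformly in $\tau$ and $s$) the polar sets $U_{\tau',R}$ and hence the wave-envelope densities $|U|^{-1}\|S_Uf\|^4_{L^2(U)}$. This requires the affine/projective structure of the cone together with Assumption \ref{ass:SpanCondition}: transversality of the tangent spaces at separated points is precisely what guarantees that the rescaled caps still form a genuine canonical covering with bounded overlap rather than degenerating. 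Everything else — the locally-constant reductions, the cancellation of Jacobians, and the dyadic $\log$ loss — is routine bookkeeping once this rescaling compatibility is in place.
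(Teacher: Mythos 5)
Your proposal is correct and follows essentially the same route as the paper's proof: first apply $S(r_1,r_2)$ to pass to the scale-$r_2$ wave-envelope sum, then for each wave envelope $(\tau,U)$ invoke the generalized Lorentz rescaling $\Lambda_{\tau,s}$ to identify $U$ with a ball at scale $s^2 r_2$, apply $S(s^2 r_2,s^2 r_3)$ to the rescaled function, undo the change of variables (the round-trip makes the Jacobians drop out), and take the maximum over dyadic $s$, with the $\log(r_2)$ arising from reorganizing the dyadic layers. You have also correctly flagged the one non-routine step — that $\Lambda_{\tau,s}$ carries the canonical covering of $\mathcal{N}_{r_3^{-1}}(\mathcal{C}M_Q)\cap 10\tau$ to a bounded-overlap canonical covering at scale $(s r_3)^{-1}$ and respects the polar sets $U_{\tau',R}$ — which is exactly the property the paper records (but does not fully verify) before giving the proof.
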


We digress for a moment to explain the key features of the generalized Lorentz transformation. Consider
\begin{equation*}
\mathcal{C} \Gamma = \{ (\xi, \frac{Q(\xi)}{h}, h) \in \R^d \times \R^l \times \R : \, \xi \in B_d(0,1), \, h \in [1/2,1] \}
\end{equation*}
with $Q(\xi) = \frac{1}{2} (\langle \xi, A_1 \xi \rangle, \ldots, \langle \xi, A_l \xi \rangle)$, $A_i \in \text{Sym}(\R^d)$. We denote subsets with base frequencies in an $s$-sector by
\begin{equation*}
\mathcal{C} \Gamma_{\tau,s} = \{ (\xi, Q(\xi)/h, h) : \xi \in B_d(0,1), \; \big| \frac{\xi}{h} - \xi_{\tau} \big| \leq s, \; h \in [1/2,1] \}.
\end{equation*}
Let $g \in \mathcal{S}(\R^{n+1})$ with $\text{supp}(\hat{g}) \subseteq \mathcal{N}_{r^{-2}}(\mathcal{C} \Gamma_{\tau,s})$, $s^{-1} \leq r$.
The generalized Lorentz transformation is supposed to enlarge the $s$-sector to the whole cone:
\begin{equation*}
\begin{split}
\Lambda_{\tau,s} : \R^d \times \R^l \times [1/2,1] \ni &(\xi,\zeta,h) \mapsto (\eta = s^{-1}(\xi - h \xi_{\tau}), \\
& (s^{-2} \zeta_i - s^{-1} \langle \eta, A_i \xi_{\tau} \rangle - \frac{1}{2} h s^{-2} \langle \xi_{\tau},A_i \xi_{\tau} \rangle)_{i=1,\ldots,l}, h).
\end{split}
\end{equation*}

We have $\Lambda_{\tau,s} (\mathcal{C} \Gamma_{\tau,s}) \approx \mathcal{C} \Gamma$ and moreover, $\Lambda_{\tau,s}(\mathcal{N}_{r^{-2}}(\mathcal{C} \Gamma_{\tau,s})) \approx \mathcal{N}_{r^{-2} s^{-2}}(\mathcal{C} \Gamma)$.

The classical case corresponds to $l=1$, $Q(\xi) = \frac{1}{2} \langle \xi, \xi \rangle$; see \cite[Section~5]{GuthWangZhang2020}.

\begin{proof}
In the first step we use the definition of $S(r_1,r_2)$:
\begin{equation*}
\sum_{B_{r_1} \in \mathcal{B}_{r_1}} |B_{r_1}|^{-1} \| S_{B_{r_1}} f \|^4_{L^2(B_{r_1})} \leq S(r_1,r_2) \sum_{r_2^{-\frac{1}{2}} \leq s \leq 1} \sum_{\tau \in \Theta_{s^2}} \sum_{U \parallel U_{\tau,r_2}} |U|^{-1} \| S_U f \|^4_{L^2(U)}.
\end{equation*}

We use the generalized Lorentz transformation $\Lambda_{\tau,s}$ centered at $\xi_{\tau}$ at scale $s$. 
The tiny subsectors $\big| \frac{\xi}{h} - \xi_{\theta} \big| \lesssim r_2^{-\frac{1}{2}}$, which constitute $S_U f = \big( \sum_{\theta \subseteq 10 \tau} |f_{\theta}|^2 \big)^{\frac{1}{2}}$, are mapped to the larger sectors
\begin{equation*}
\big| \frac{\xi}{h} - \xi^*_{\theta} \big| \lesssim r_2^{-\frac{1}{2}} s^{-1}.
\end{equation*}

Via the generalized Lorentz transformation $\Lambda_{\tau,s}$ we find that $\tau$ is mapped essentially to the full cone and the $r_3^{-1}$-neighborhood of $\mathcal{C} \Gamma_{\tau,s}$ is mapped to the $s^{-2} r_3^{-1}$-neighborhood of the full cone. Moreover, by definition it is easy to see that $\Lambda_{\tau,s}^*(U) \approx B_{s^2 r_2}$.

This implies that
\begin{equation*}
\sum_{U \parallel U_{\tau,r_2^2}} |U|^{-1} \| S_U f \|^4_{L^2(U)} \approx \sum_{B_{s^2 r_2} \in \mathcal{B}_{s^2 r_2}} |U|^{-1} |J \Lambda_{\tau}|^2 \| S_{B_{s^2 r_2}} \overline{f} \|^4_{L^2(B_{s^2 r_2})}.
\end{equation*}
Above $J \Lambda_{\tau}$ denotes the Jacobian incurred from the change of variables $x \mapsto \Lambda_{\tau,s}^* x$ and $\bar{f}$ denotes the function obtained from pulling back the Fourier transform of $f$ with $\Lambda_{\tau,s}$: $
\hat{\bar{f}} = \hat{f} \circ (\Lambda_{\tau,s})^{-1}$.

\smallskip

Now we can estimate the expression on the different scales $s^2 r_2$, $s^2 r_3$ to obtain
\begin{equation*}
\begin{split}
&\quad \sum_{B_{s^2 r_2} \in \mathcal{B}_{s^2 r_2}} |B_{s^2 r_2}|^{-1} \| S_{B_{s^2 r_2}} \bar{f} \|^4_{L^2(B_{s^2 r_2})} \\
&\leq S(s^2 r_2, s^2 r_3) \sum_{(s^2 r_3)^{-\frac{1}{2}} \leq \sigma \leq 1} \sum_{\tau \in \Theta_{\sigma^2}} \sum_{V \parallel V_{\tau,\sigma^2}} |V|^{-1} \| S_V \bar{f} \|^4_{L^2(V)}.
\end{split}
\end{equation*}
Undoing the change of variables by the properties of the generalized Lorentz transformation, we find
\begin{equation*}
\sum_{U \parallel U_{\tau,r_2^2}} |U|^{-1} \| S_U f \|^4_{L^2(U)} \lesssim S(s^2 r_2, s^2 r_3) \sum_{r_3^{-\frac{1}{2}} \leq \sigma \leq s} \sum_{\substack{\tau' \in \Theta_{\sigma^2}, \\ \tau' \subseteq \tau}} \sum_{V \parallel V_{\tau',\sigma^2}} |V|^{-1} \| S_V f \|^4_{L^2(V)}.
\end{equation*}

This allows us to write
\begin{equation*}
\begin{split}
&\quad \sum_{r_2^{-\frac{1}{2}} \leq s \leq 1} \sum_{\zeta \in \Theta_{s^2}} \sum_{U \parallel U_{\tau,s^2}} |U|^{-1} \| S_U f \|^4_{L^2(U)} \\
 &\lesssim \sum_{r_2^{-\frac{1}{2}} \leq s \leq 1} S(s^2 r_2, s^2 r_3) \sum_{\zeta \in \Theta_{s^2}} \sum_{r_3^{-\frac{1}{2}} \leq \sigma \leq s} \sum_{\substack{ \tau \in \Theta_{\sigma^2}, \\ \tau \subseteq \zeta}} \sum_{U \parallel U_{\tau,\sigma^2}} |U|^{-1} \| S_U f \|^4_{L^2(U)} \\
&\lesssim \log(r_2) \max_{s \in [r_2^{-1},1]} S(s r_2,s r_3) \sum_{r_{3}^{-\frac{1}{2}} \leq \sigma \leq 1} \sum_{\tau \in \Theta_{\sigma^2}} \sum_{U \parallel U_{\tau,\sigma^2}} |U|^{-1} \| S_U f \|^4_{L^2(U)}.
\end{split}
\end{equation*}
The logarithmic loss comes from counting $\tau \in \Theta_{\sigma^2}$ in the inner sum by the outer sum over $r_2$.
In conclusion, having redenoted $s^2 \to s$, we obtain
\begin{equation*}
S(r_1,r_3) \lesssim \log(r_2) S(r_1,r_2) \max_{r_2^{-1} \leq s \leq 1} S(s r_2, s r_3).
\end{equation*}
\end{proof}

The starting point of the induction on scales will be the square function estimates for quadratic manifolds. Choose $K \gg 1$ to be determined later and divide the height parameter into intervals of length $K^{-1}$. We write
\begin{equation*}
\mathcal{C}_K M_Q = \{ (\xi,M_Q(\xi)/h,h) \in \R^{n+1} : \xi \in  B_d(0,1), \quad h \in [a,a+K^{-1}] \}
\end{equation*}
for some $a \in [1/2,1]$ and define the modified quantity $S_K(r,R)$ analogous to the above with the difference that we are considering $f \in \mathcal{S}(\R^{n+1})$, $\text{supp}(\hat{f}) \subseteq \mathcal{N}_{\delta}(\mathcal{C}_K M_Q)$.

Note that we gloss over the dependence on $a$ as the following considerations are uniformly valid in $a$.

We have by simple decomposition of the Fourier support:
\begin{equation}
\label{eq:ReductionHeightLocalization}
S(r,R) \lesssim K S_K(r,R).
\end{equation}

The following lemma explains the introduction of the parameter $K$:
\begin{proposition}
\label{prop:InductionStart}
The following estimate holds for $1 \leq r \leq K$:
\begin{equation}
\label{eq:InductionStart}
S_K(1,K) \lesssim_\delta K^\delta.
\end{equation}
\end{proposition}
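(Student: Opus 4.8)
The plan is to reduce the height-localized conical estimate at scales $1$ and $K$ to the \emph{flat} square function estimate of Theorem~\ref{thm:SquareFunctionQuadraticManifold}. The point of localizing the height parameter $h$ to an interval of length $K^{-1}$ is that, at this scale, the cone $\mathcal{C}_K M_Q$ is comparable (after an affine rescaling) to a $\delta$-neighborhood of a single \emph{quadratic} manifold $M_{\tilde Q}$, rather than a genuinely conical object: the dilation factor $1/h$ varies only by $1+O(K^{-1})$ across the slab, so $q_i(\xi)/h = q_i(\xi)(1+O(K^{-1}))$ and the curvature data is a perturbation of that of $M_Q$. Crucially, the perturbed generators $\tilde A_i = \partial^2 \tilde q_i$ still satisfy Assumption~\ref{ass:SpanCondition} with a comparable constant $c/2$, since the determinant in \eqref{eq:SpanCondition} is a continuous function of the $A_i$ and we have only perturbed them by $O(K^{-1})$; hence Theorem~\ref{thm:SquareFunctionQuadraticManifold} applies uniformly in the slab position $a$.

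Concretely, I would first unwind the definition of $S_K(1,K)$: the left-hand side is $\sum_{B_1} |B_1|^{-1}\|S_{B_1}f\|_{L^2(B_1)}^4 \approx \|f\|_{L^4(\R^{n+1})}^4$ by the locally constant property, using that $\hat f$ is supported in $\mathcal{N}_{K^{-1}}(\mathcal{C}_K M_Q)$ which lies in a ball of radius $O(1)$. Next I would recognize that the caps $\theta \in \Theta_{K^{-1}}$ covering $\mathcal{N}_{K^{-1}}(\mathcal{C}_K M_Q)$ are, up to the affine change of variables taking the slab $h\in[a,a+K^{-1}]$ to a fixed position, precisely the canonical $K^{-1/2}\times\cdots\times K^{-1}$-caps $\Theta_{K^{-1}}$ of the quadratic manifold $M_{\tilde Q}$ associated to the rescaled forms. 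Then Theorem~\ref{thm:SquareFunctionQuadraticManifold} gives
\begin{equation*}
\|f\|_{L^4(\R^{n+1})} \lesssim \Big\| \Big( \sum_{\theta \in \Theta_{K^{-1}}} |f_\theta|^2 \Big)^{1/2} \Big\|_{L^4(\R^{n+1})},
\end{equation*}
with implicit constant depending only on $c$ and the spectral radii of the $A_i$ (and not on $a$). The final step is to translate the right-hand side back into the wave-envelope language of the definition of $S_K$: for the single scale $s=1$ (i.e. the coarsest $\tau$), the sets $U_{\tau,K}$ are translates of a fixed slab and $\|f\|_{L^4}^4 \lesssim \sum_{U\parallel U_{\tau,K}}|U|^{-1}\|S_U f\|_{L^2(U)}^4$ by Cauchy--Schwarz together with the locally constant property, so the $s=1$ term on the right of the defining inequality already dominates $\|f\|_{L^4}^4$; keeping only $s=1$ and discarding the (nonnegative) terms $s<1$ gives $S_K(1,K)\lesssim 1$, which is stronger than the claimed $K^\delta$.

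The main obstacle is bookkeeping rather than any essential difficulty: one must carefully check that the affine map flattening the slab $h\in[a,a+K^{-1}]$ sends the canonical conical caps $\theta$ of $\mathcal{C}_K M_Q$ to (dilates of) the canonical caps of a quadratic manifold satisfying Assumption~\ref{ass:SpanCondition}, and that this map distorts $L^4$ norms and the cap decomposition only by bounded factors uniform in $a\in[1/2,1]$. A secondary point requiring care is the passage between the ``ball'' formulation $\|f\|_{L^4}^4 \approx \sum_{B_1}|B_1|^{-1}\|S_{B_1}f\|_{L^2}^4$ used to define $S_K$ and the pointwise square-function formulation $\|(\sum_\theta|f_\theta|^2)^{1/2}\|_{L^4}$ appearing in Theorem~\ref{thm:SquareFunctionQuadraticManifold}; this is handled by the standard locally constant property for functions with Fourier support in a fixed ball. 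Given these, the slack of $K^\delta$ is generous and the proposition follows with room to spare; indeed the role of this generosity is purely to allow for logarithmic losses later in the induction on scales via Lemma~\ref{lem:TwoScaleInequality}.
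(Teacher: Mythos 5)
Your first step -- flattening the height-localized slab $h\in[a,a+K^{-1}]$ onto a single quadratic manifold $M_{Q/a}$ (whose generators $A_i/a$ still satisfy Assumption~\ref{ass:SpanCondition}) and applying Theorem~\ref{thm:SquareFunctionQuadraticManifold} to get $\|f\|_{L^4}^4\lesssim \int(\sum_{\theta}|f_\theta|^2)^2$ -- is sound and is indeed the reason the parameter $K$ is introduced. The gap is in your last step: the claim that the $s=1$ term of the wave envelope sum already dominates $\int(\sum_\theta|f_\theta|^2)^2$ is false, and the inequality you invoke goes the wrong way. For $s=1$ the sets $U\parallel U_{\tau,K}$ are essentially balls $B_K$, and Cauchy--Schwarz (or Jensen) gives
\begin{equation*}
\sum_{B_K}|B_K|^{-1}\Big(\int_{B_K}\sum_\theta|f_\theta|^2\Big)^2 \;\leq\; \int_{\R^{n+1}}\Big(\sum_\theta|f_\theta|^2\Big)^2,
\end{equation*}
i.e.\ the $s=1$ term is \emph{smaller} than what you need to bound. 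Reversing this would require $\sum_\theta|f_\theta|^2$ to be essentially constant on $K$-balls, but it is only essentially constant at scale $K^{1/2}$ (each $|f_\theta|^2$ lives on translates of the $K^{1/2}\times\cdots\times K$ plank $\theta^*$). A single wave packet $f=f_{\theta,T}$ already defeats the claim: $\int_{B_K}|f_\theta|^4\sim |T|c_T^4$ while $|B_K|^{-1}(\int_{B_K}|f_\theta|^2)^2\sim (|T|/|B_K|)\,|T|c_T^4\ll |T|c_T^4$; the matching contribution comes from the finest scale $s=K^{-1/2}$, $U=\theta^*$. The whole point of the multi-scale sum over $s$ (Proposition~\ref{prop:KakeyaEstimateConical}) is that no single scale suffices, so "keeping only $s=1$" removes exactly the content of the proposition.

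The paper proves Proposition~\ref{prop:InductionStart} by a different route: a bilinear Kakeya estimate \eqref{eq:BilinearKakeya}, obtained from a wave packet decomposition and the quantitative transversality of tangent planes at $s$-separated base points furnished by Assumption~\ref{ass:SpanCondition}, followed by a Bourgain--Guth broad--narrow decomposition as in \cite[Lemma~6.5]{GuthWangZhang2020} to produce the wave envelope estimate \eqref{eq:WaveEnvelopeL4} directly, with all scales $K^{-1/2}\leq s\leq 1$ present on the right. Your approach can be repaired without abandoning its structure: after the flattening step yields $\|f\|_{L^4}^4\lesssim\int(\sum_\theta|f_\theta|^2)^2$ uniformly in $a$, convert the right-hand side into the wave envelope form by invoking Proposition~\ref{prop:KakeyaEstimateConical} (equivalently Lemma~\ref{lem:LocalKakeyaEstimate} with $r=K^{1/2}$), which is precisely the statement that $\int(\sum_\theta|f_\theta|^2)^2$ is controlled by the full multi-scale sum. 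With that substitution your argument would give $S_K(1,K)\lesssim 1$ and would constitute a genuinely different (and arguably shorter) proof than the paper's, since it replaces the bilinear Kakeya and Bourgain--Guth steps by Theorem~\ref{thm:SquareFunctionQuadraticManifold} plus the already-proved Kakeya estimate. As written, however, the conversion step is incorrect.
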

This is a variant of the square function estimate proved in Theorem \ref{thm:SquareFunctionQuadraticManifold}. Note that trivially Lemmas \ref{lem:TwoScaleInequality} and \ref{lem:LocalKakeyaEstimate} remain valid after replacing $S(r,R)$ with $S_K(r,R)$.

We explain the proof of Proposition \ref{prop:InductionStart}. The key ingredient is the following bilinear Kakeya estimate:
\begin{lemma}
Let $\theta_i(\xi_i) \in \Theta_{K^{-1}}$ be two conical sectors with separation of the base frequencies $|\xi_1-\xi_2| \sim s \gg K^{-\frac{1}{2}}$. Then we have for $\text{supp}(\widehat{f_{\theta_i}}) \subseteq \mathcal{N}_{\frac{1}{K}}(\mathcal{C} \Gamma_{\frac{1}{K}}) \cap \theta_i$:
\begin{equation}
\label{eq:BilinearKakeya}
\sum_{B_{K^{\frac{1}{2}}} \in \mathcal{B}_{K^{\frac{1}{2}}}} \int_{B_{K^{\frac{1}{2}}}} |f_{\theta_1} f_{\theta_2}|^2 \lesssim s^{-d} \sum_{B_K \in \mathcal{B}_K} |B_K|^{-1} \int |f_{\theta_1}|^2 w_{B_K} \int |f_{\theta_2}|^2 w_{B_K}.
\end{equation}
\end{lemma}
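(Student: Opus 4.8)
The plan is to reduce the estimate to a single geometric fact --- a sharp bound for the volume of the intersection of two dual planks --- and to read off the gain $s^{-d}$ from this overlap via Assumption~\ref{ass:SpanCondition}. First I would use that, in the setting of Proposition~\ref{prop:InductionStart}, the height parameter is confined to an interval of length $\sim K^{-1}$, so that each sector $\theta_i=\theta_i(\xi_i)$ is, up to constants, a box with $d$ sides of length $\sim K^{-1/2}$ along the tangential directions $\mathbf{t}_1(\xi_i),\dots,\mathbf{t}_d(\xi_i)$ of \eqref{eq:TangentialVectors} and $l+1$ sides of length $\sim K^{-1}$ along $\mathfrak{n}_1(\xi_i),\dots,\mathfrak{n}_l(\xi_i)$ of \eqref{eq:NormalVectors} and the cone direction $\mathbf{c}(\xi_i)$. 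Then $\tilde\theta_i:=\theta_i-\theta_i$ has all sidelengths $\lesssim K^{-1/2}$, so $|f_{\theta_i}|^2$ has Fourier support in a box of diameter $\lesssim K^{-1/2}$ and is essentially constant at scale $K^{1/2}$; as the $B_{K^{1/2}}$ are finitely overlapping, $\sum_{B_{K^{1/2}}}\int_{B_{K^{1/2}}}|f_{\theta_1}f_{\theta_2}|^2\lesssim\int_{\R^{n+1}}|f_{\theta_1}|^2|f_{\theta_2}|^2$, and it suffices to bound the global integral.

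Next I would apply the locally constant property twice together with Fubini. Writing $\tilde\theta_i^*$ for a box dual to $\tilde\theta_i$ --- comparable to $\{y:|\langle y,\mathbf{t}_j(\xi_i)\rangle|\lesssim K^{1/2},\ |\langle y,\mathfrak{n}_k(\xi_i)\rangle|\lesssim K,\ |\langle y,\mathbf{c}(\xi_i)\rangle|\lesssim K\}$, so that $|\tilde\theta_i^*|\sim K^{d/2+l+1}$ --- the locally constant property gives $|f_{\theta_i}(x)|^2\lesssim|\tilde\theta_i^*|^{-1}\int|f_{\theta_i}(y)|^2w_i(x-y)\,dy$ for a weight $w_i$ adapted to $\tilde\theta_i^*$. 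Substituting this for $i=1,2$, exchanging the order of integration, and bounding the arising kernel $\int w_1(x-y_1)w_2(x-y_2)\,dx$ by $\big(\sup_v|\tilde\theta_1^*\cap(v+\tilde\theta_2^*)|\big)$ times a weight at scale $K$ in $y_1-y_2$ (summed over $K$-balls this yields the $w_{B_K}$), one obtains
\begin{equation*}
\int|f_{\theta_1}|^2|f_{\theta_2}|^2\lesssim|\tilde\theta_1^*|^{-1}|\tilde\theta_2^*|^{-1}\Big(\sup_v|\tilde\theta_1^*\cap(v+\tilde\theta_2^*)|\Big)\sum_{B_K}\int|f_{\theta_1}|^2w_{B_K}\int|f_{\theta_2}|^2w_{B_K}.
\end{equation*}
Since $|\tilde\theta_1^*|^{-1}|\tilde\theta_2^*|^{-1}K^{l+1}=K^{-(d+l+1)}=|B_K|^{-1}$, the lemma reduces to the overlap bound $\sup_v|\tilde\theta_1^*(\xi_1)\cap(v+\tilde\theta_2^*(\xi_2))|\lesssim s^{-d}K^{l+1}$.

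For the overlap bound --- the heart of the matter --- fix a point $y_*$ in the intersection (if nonempty); any other point $y$ in it gives $z:=y-y_*$ with $|\langle z,\mathbf{t}_j(\xi_i)\rangle|\lesssim K^{1/2}$, $|\langle z,\mathfrak{n}_k(\xi_i)\rangle|\lesssim K$, $|\langle z,\mathbf{c}(\xi_i)\rangle|\lesssim K$ for $i=1,2$. With $\Delta\xi=\xi_1-\xi_2$, $|\Delta\xi|\sim s$, the vector $\delta_j:=\mathbf{t}_j(\xi_1)-\mathbf{t}_j(\xi_2)$ has --- by \eqref{eq:TangentialVectors} and the linearity of $\xi\mapsto\partial_iq_k(\xi)$ --- vanishing first $d$ and last coordinates and remaining coordinates $((A_1\Delta\xi)_j,\dots,(A_l\Delta\xi)_j)$, so $|\langle z,\delta_j\rangle|=|\langle z,\mathbf{t}_j(\xi_1)\rangle-\langle z,\mathbf{t}_j(\xi_2)\rangle|\lesssim K^{1/2}$. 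Thus $z$ lies in the intersection of $d$ slabs of width $\sim K^{1/2}$ with normals $\mathbf{t}_j(\xi_1)$, $d$ further slabs of width $\sim K^{1/2}$ with normals $\delta_j$, and (choosing $l+1-d$ further normals among $\mathfrak{n}_1(\xi_1),\dots,\mathfrak{n}_l(\xi_1),\mathbf{c}(\xi_1)$ to complete a spanning set of $\R^{n+1}$) $l+1-d$ slabs of width $\sim K$. Writing $\Delta\xi=s\nu$, the vectors $\delta_j/s$ are the rows of the $d\times l$ matrix with columns $A_1\nu,\dots,A_l\nu$, so by the Cauchy--Binet formula and Assumption~\ref{ass:SpanCondition} the parallelepiped spanned by $\delta_1,\dots,\delta_d$ has $d$-volume $\gtrsim_c s^d$; moreover it is transverse to the span of $\mathbf{t}_1(\xi_1),\dots,\mathbf{t}_d(\xi_1)$ (which carry the first $d$ coordinate directions absent from the $\delta_j$), with transversality quantified by the spectral radii of the $A_i$. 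Hence the $2d$ narrow slabs cut out a region of volume $\lesssim_c(K^{1/2})^{2d}s^{-d}$ inside the span of $\mathbf{t}_1(\xi_1),\dots,\mathbf{t}_d(\xi_1),\delta_1,\dots,\delta_d$, and intersecting with the $l+1-d$ remaining slabs gives $|\tilde\theta_1^*\cap(v+\tilde\theta_2^*)|\lesssim_c(K^{1/2})^{2d}s^{-d}K^{l+1-d}=s^{-d}K^{l+1}$. Here $l\ge d$ (forced by \eqref{eq:SpanCondition}) and the hypothesis $s\gg K^{-1/2}$ enters, the latter ensuring $K^{1/2}/s\ll K$ so that the $\delta_j$-slabs genuinely beat the trivial bound $|\tilde\theta_i^*|$.

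The one substantive step is the overlap estimate: one must observe that differencing the tangent spaces at $\xi_1$ and $\xi_2$ yields exactly $d$ new directions $\delta_j$ whose joint non-degeneracy is the content of \eqref{eq:SpanCondition}, and keep the linear-algebraic bookkeeping clean enough that the transversality constant $c$ and the spectral radii of the $A_i$ enter only the implicit constant. Everything else --- the flattening, the locally constant property, the weight manipulation involving $w_i$ and $w_{B_K}$, and the tracking of powers of $K$ --- is routine.
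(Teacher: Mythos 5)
Your proof is correct and follows essentially the same route as the paper: both arguments reduce the bilinear estimate to the overlap bound $\sup_v|\tilde\theta_1^*\cap(v+\tilde\theta_2^*)|\lesssim s^{-d}K^{l+1}\sim s^{-d}|T_1||T_2|/|B_K|$, obtained from the observation that the differenced tangent vectors $\mathbf{t}_j(\xi_1)-\mathbf{t}_j(\xi_2)$ have entries $(A_k\Delta\xi)_j$ and hence, by Assumption~\ref{ass:SpanCondition}, span a $d$-volume $\gtrsim s^d$ transverse to $\mathrm{span}(\mathbf{t}_1(\xi_1),\dots,\mathbf{t}_d(\xi_1))$. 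The only (immaterial) difference is that the paper implements this via a wave packet decomposition and counts intersecting tubes, whereas you run the locally constant property as a convolution with dual-plank weights; your Cauchy--Binet phrasing of the volume lower bound is a clean way to invoke \eqref{eq:SpanCondition}.
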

\begin{proof}
The argument rests on a wave packet decomposition:
\begin{equation*}
f_{\theta_i} = \sum_{T_i \parallel \tilde{\theta}_i^*} f_{\theta_i,T_i}
\end{equation*}
with $\tilde{\theta}_i^*$ being a rectangle of size
\begin{equation*}
\underbrace{K^{\frac{1}{2}} \times \ldots \times K^{\frac{1}{2}}}_{d \text{ components in } \mathbf{t}_j(\xi_i)} \times \underbrace{K \times \ldots \times K}_{l \text{ components in } \mathbf{n}_j(\xi_i)} \times K.
\end{equation*}
Denote $\sup_{x \in T_i} |f_{\theta_i,T_i}(x)| = c_{T_i}$ and recall that $|f_{\theta_i,T_i}(x)| \sim |f_{\theta_i,T_i}(y)|$ for $x,y \in T_i$.

We have to compute the size of the intersection of $T_1$ and $T_2$. To this end, we take advantage of the quantitative transversality of $\langle \mathbf{t}_1(\xi_1),\ldots,\mathbf{t}_d(\xi_1) \rangle$ and $\langle \mathbf{t}_1(\xi_2),\ldots,\mathbf{t}_d(\xi_2) \rangle$. We write
\begin{equation*}
\mathbf{t}_i(\xi_2) = \mathbf{t}_i(\xi_1) + \sum_{j=1}^d (\xi_2 - \xi_1)_j \partial_j \mathbf{t}_i(\xi_1)
\end{equation*}
to find by elimination that
\begin{equation*}
\begin{split}
&\quad \mathbf{t}_1(\xi_1) \wedge \ldots \wedge \mathbf{t}_d(\xi_1) \wedge \mathbf{t}_1(\xi_2) \wedge \ldots \wedge \mathbf{t}_d(\xi_2) \\
&= \mathbf{t}_1(\xi_1) \wedge \ldots \wedge \mathbf{t}_d(\xi_1) \wedge \begin{pmatrix}
0 \\ \vdots \\ 0 \\ (A_1 \Delta \xi)_1 \\ \vdots \\ (A_l \Delta \xi)_1
\end{pmatrix}
\wedge \ldots \wedge
\begin{pmatrix}
0 \\ \vdots \\ 0 \\ (A_1 \Delta \xi)_d \\ \vdots \\ (A_l \Delta \xi)_d
\end{pmatrix}
.
\end{split}
\end{equation*}
We arrange the vectors in an $n \times 2d$-matrix:
\begin{equation*}
\begin{pmatrix}
1 & 0 & \ldots & 0 & 0 & \ldots & 0 \\
0 & 1 & &\ldots & 0 & \ldots & 0 \\
\vdots &\vdots & \vdots & 0 & \vdots & 0 & \vdots \\
0 &0 & \ldots & 1 & 0 & \ldots & 0 \\
\partial_1 q_1(\xi_1) & \partial_2 q_1(\xi_1) & \ldots & \partial_d q_1(\xi_1) & \, & (A_1 \Delta \xi)^t & \, \\
 \vdots  & \vdots & \vdots & \vdots & \, & \vdots & \, \\
 \partial_1 q_l(\xi_1) & \partial_2 q_l(\xi_1) & \ldots & \partial_d q_l(\xi_1) & \, & (A_l \Delta \xi)^t & \,
\end{pmatrix}.
\end{equation*}
By the transversality assumption \eqref{eq:SpanCondition} the $2d$-dimensional volume is of size $\geq c |\Delta \xi|^d \gtrsim s^d$. This yields the estimate:
\begin{equation*}
|T_1 \cap T_2| \sim s^{-d} K^{\frac{d}{2}} K^{\frac{d}{2}} K^{l-d+1} \sim s^{-d} K^{l+1} \sim s^{-d} \frac{|T_1| |T_2|}{|B_K|}.
\end{equation*}

For this reason we can conclude
\begin{equation*}
\begin{split}
\sum_{B_{K^{\frac{1}{2}}} \subseteq B_K} \int_{B_{K^{\frac{1}{2}}}} |f_{\theta_1} f_{\theta_2}|^2 &\lesssim \sum_{\substack{B_{K^{\frac{1}{2}}} \subseteq B_K, \\ T_1 \parallel T_{\tilde{\theta_1^*}}, B_{K^{\frac{1}{2}}} \cap T_1 \neq \emptyset, \\ T_2 \parallel T_{\tilde{\theta_2^*}}, B_{K^{\frac{1}{2}}} \cap T_2 \neq \emptyset}} |B_{K^{\frac{1}{2}}}| c_{T_1}^2 c_{T_2}^2 \\
&\lesssim s^{-d} |B_K|^{-1} \big( \int_{B_K} \sum_{T_1 \parallel T_{\tilde{\theta_1^*}}} c_{T_1}^2 \chi_{T_1} \big) \big( \int_{B_K} \sum_{T_2 \parallel T_{\tilde{\theta_2^*}}} c_{T_2}^2 \chi_{T_2} \big) \\
&\lesssim  s^{-d} |B_K|^{-1} \int |f_{\theta_1}|^2 w_{B_K} \int |f_{\theta_2}|^2 w_{B_K}.
\end{split}
\end{equation*}
\end{proof}
The bilinear Kakeya estimate gives rise to the wave envelope estimate:
\begin{lemma}
Let $f \in \mathcal{S}(\R^{n+1})$ with $\text{supp}(\hat{f}) \subseteq \mathcal{N}_{\frac{1}{K}} ( \mathcal{C} \Gamma_{\frac{1}{K}})$. For any $\delta > 0$ it holds
\begin{equation}
\label{eq:WaveEnvelopeL4}
\| f \|^4_{L^4(\R^{n+1})} \leq C_\delta K^{\delta} \sum_{K^{-\frac{1}{2}} \leq s \leq 1} \sum_{\tau \in \Theta_{s^2}} \sum_{U \parallel U_{\tau,K}} |U|^{-1} \| S_U f \|^4_{L^2(U)}.
\end{equation}
\end{lemma}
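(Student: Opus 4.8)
The plan is to deduce the wave envelope $L^4$-estimate \eqref{eq:WaveEnvelopeL4} from the bilinear Kakeya estimate \eqref{eq:BilinearKakeya} by a broad--narrow argument in the style of Bourgain--Guth, organized as an induction on scales. Fix $\delta>0$ and a large auxiliary constant $K_0=K_0(\delta)$, and partition $\R^{n+1}$ into balls $B$ of radius $K^{1/2}$, the scale on which every cap $\theta\in\Theta_{K^{-1}}$ is flat, so that each $|f_\theta|$ is essentially constant on $B$. For dyadic $K^{-1/2}\le s\le 1$ and $\tau\in\Theta_{s^2}$ write $f_\tau=\sum_{\theta\subseteq 10\tau}f_\theta$. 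On each $B$ we run the standard dichotomy along the scales $s=1,K_0^{-1},K_0^{-2},\dots$: either there is a single sector $\tau(B)$, of some scale $s(B)$, carrying all but a $K_0^{-1}$-fraction of $\|f\|_{L^4(B)}$ (the \emph{narrow} alternative), or there are two sectors $\tau_1(B),\tau_2(B)\in\Theta_{s(B)^2}$ with separated base frequencies $|\xi_{\tau_1}-\xi_{\tau_2}|\gtrsim s(B)$ for which $\|f\|_{L^4(B)}^4\lesssim K_0^{O(1)}\,\|f_{\tau_1(B)}f_{\tau_2(B)}\|_{L^2(B)}^2$ (the \emph{broad} alternative).

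In the broad alternative we group the balls $B$ according to the common scale $s$ and the common pair $(\tau_1,\tau_2)$, sum the bilinear pointwise bound over these $B$, and apply \eqref{eq:BilinearKakeya}, rescaled from the cap scale to the sector scale $s$ by means of the generalized Lorentz transformation $\Lambda_{\tau,s}$; the quantitative transversality of the tangent spaces supplied by Assumption \ref{ass:SpanCondition} enters exactly where it does in the proof of \eqref{eq:BilinearKakeya}, namely in the volume computation $|T_1\cap T_2|\sim s^{-d}|T_1||T_2|/|B_K|$. This bounds the broad contribution by $s^{-d}\sum_{B_K}|B_K|^{-1}\int|f_{\tau_1}|^2 w_{B_K}\int|f_{\tau_2}|^2 w_{B_K}$; one then dominates $\int|f_{\tau_i}|^2 w_{B_K}$ by $\int_U\sum_{\theta\subseteq 10\tau}|f_\theta|^2$ for a slab $U\parallel U_{\tau,K}$ containing $B_K$, where $\tau\in\Theta_{s^2}$ is the sector at scale $s$ containing $\tau_1\cup\tau_2$, and observes that the separation gain $s^{-d}$ is precisely what converts the $B_K$-normalization into the $|U|^{-1}$-normalization. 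Hence the broad contribution is $\lesssim K_0^{O(1)}\sum_{\tau\in\Theta_{s^2}}\sum_{U\parallel U_{\tau,K}}|U|^{-1}\|S_U f\|_{L^2(U)}^4$, which is one of the terms on the right-hand side of \eqref{eq:WaveEnvelopeL4}.

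In the narrow alternative we apply $\Lambda_{\tau(B),s(B)}$, which carries $\mathcal{N}_{K^{-1}}(\mathcal{C}\Gamma_{\tau(B),s(B)})$ to an affine image of $\mathcal{N}_{K^{-1}s(B)^{-2}}(\mathcal{C}\Gamma)$ and maps the wave-envelope decomposition covariantly, thereby reducing the narrow contribution to the estimate \eqref{eq:WaveEnvelopeL4} for the full cone at the strictly smaller scale $Ks(B)^2\le K/K_0^2$; iterating this at most $O(\log_{K_0}K)$ times, the residual scale becomes $O(1)$, where the cone is flat over a bounded frequency range and the estimate follows from the square function estimate for the quadratic manifold $M_Q$ in Theorem \ref{thm:SquareFunctionQuadraticManifold} (together with Plancherel in the conical variable). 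Collecting the resulting geometric series of losses, with each of the $O(\log_{K_0}K)$ levels costing a factor $K_0^{O(1)}$, and then choosing $K_0$ large enough relative to $\delta$, yields the stated bound with constant $C_\delta$ and exponent $K^\delta$.

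The main obstacle is the geometric bookkeeping in the broad step, where several points have to be verified carefully: that a broad-separated pair of scale-$s$ sectors $\tau_1,\tau_2$ is contained in a common dilate $10\tau$ with $\tau\in\Theta_{s^2}$; that the rescaled bilinear Kakeya estimate at scale $s$ retains the separation gain $s^{-d}$ with the correct exponent — this is exactly where the full strength of Assumption \ref{ass:SpanCondition} is used, since it is the nonvanishing of the $2d$-dimensional volume spanned by $\mathbf{t}_1(\xi_1),\dots,\mathbf{t}_d(\xi_1),\mathbf{t}_1(\xi_2),\dots,\mathbf{t}_d(\xi_2)$ that produces this gain; and that the resulting $B_K$-averages are faithfully dominated by the wave-envelope quantities $|U|^{-1}\|S_U f\|_{L^2(U)}^4$ attached to the slabs $U_{\tau,K}$ at every intermediate scale $s\in[K^{-1/2},1]$. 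Once the bilinear input has been matched to the wave-envelope right-hand side in this way, the Bourgain--Guth iteration is routine.
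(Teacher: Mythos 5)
Your proposal takes essentially the same route as the paper, which defers the proof to \cite[Lemma~6.5]{GuthWangZhang2020}: a Bourgain--Guth broad--narrow decomposition at an auxiliary scale $K_0(\delta)$, with the bilinear Kakeya estimate \eqref{eq:BilinearKakeya} absorbing the broad contribution into the wave envelope right-hand side and the generalized Lorentz rescaling $\Lambda_{\tau,s}$ handling the narrow recursion. Two small remarks on the write-up: (i) the paper's bilinear Kakeya \eqref{eq:BilinearKakeya} is already stated with variable separation $s\gg K^{-1/2}$ at the cap scale, so it is cleaner either to decompose the broad sectors into caps and invoke it directly, or to rescale to separation $\sim 1$ --- mixing both, as your phrasing suggests, double-counts the $s^{-d}$ gain; and (ii) at the terminal scale $O(K_0)$ only $O_{K_0}(1)$ caps remain, so the base case closes trivially (at a $K_0^{O(1)}$ loss that is absorbed by the choice of $K_0$), and the appeal to Theorem~\ref{thm:SquareFunctionQuadraticManifold} there is harmless but not needed --- that theorem enters only later, in the passage from \eqref{eq:WaveEnvelopeL4} to Proposition~\ref{prop:InductionStart}.
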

\begin{proof}
This can be proved following along the lines of \cite[Lemma~6.5]{GuthWangZhang2020}, employing the Bourgain--Guth \cite{Bourgainguth2011} argument and the bilinear Kakeya estimate from the preceding lemma.
\end{proof}

Proposition \ref{prop:InductionStart} is a localized variant of the previous lemma. The proof of Proposition \ref{prop:InductionStart} additionally uses the biorthogonality to bound the lhs of \eqref{eq:InductionStart} by \eqref{eq:WaveEnvelopeL4}. We omit the details from \cite{GuthWangZhang2020} to avoid repetition.

\smallskip

We are ready to show Proposition \ref{prop:WaveEnvelopeEstimate} by induction-on-scales:
\begin{proof}[Proof~of~Proposition~\ref{prop:WaveEnvelopeEstimate}]
We shall show the stronger estimate
\begin{equation}
\label{eq:InductionEstimate}
S_K(r,R) \leq C_\varepsilon \big( \frac{R}{r} \big)^\varepsilon.
\end{equation}
which will suffice for $K= R^{\delta}$, $\delta \ll \varepsilon$. Moreover, note that we can choose $R \gg 1$ from the outset.

Suppose \eqref{eq:InductionEstimate} holds for $\frac{R}{r} \leq \frac{R_0}{2}$. We shall show that the estimate remains valid for $\frac{R}{r} \leq R_0$. The base case is $R_0 \leq K$, for which we have
\begin{equation*}
S_K(r,R) \leq C
\end{equation*}
as a consequence of Lemmas \ref{lem:TwoScaleInequality} and \ref{lem:LocalKakeyaEstimate}.

So, suppose that $R_0 > K$. In case $r \leq K^{\frac{1}{2}}$ we use Lemma \ref{lem:TwoScaleInequality} to find
\begin{equation*}
S_K(r,R) \lesssim S_K(r,K) \log(K) \max_{s \in [K^{-1},1]} S_K(s K , s R) \lesssim \frac{\log(K)}{K^\varepsilon} R^\varepsilon.
\end{equation*}
The ultimate estimate is due to the induction assumption. With $K= R^{\delta}$ and $R \gg 1$ induction readily closes.

Secondly, suppose that $r \geq K^{\frac{1}{2}}$. In this case we use Lemmas \ref{lem:LocalKakeyaEstimate} and \ref{lem:TwoScaleInequality} to find
\begin{equation*}
S_K(r,R) \lesssim S_K(r,r^2) \log(r) \max_{s \in [r^{-2},1]} S_K(s r^2, s R) \leq \log(r) C_\varepsilon \big( \frac{R}{r^2} \big)^{\varepsilon}
\end{equation*}
and induction closes by $\log(r) / r^\varepsilon \ll 1$ since $r \geq K^{\frac{1}{2}} \geq R^{\frac{\delta}{2}}$.
\end{proof}

\section{Local smoothing estimates for multi-parameter averages}
\label{section:MultiParameterAverages}

Next, we point out how the square function estimates yield local smoothing estimates for multi-parameter averages. Multi-parameter averages were studied by Marletta--Ricci \cite{MarlettaRicci1998} (see also \cite{MarlettaRicciZienkiewicz1998}). Presently, we are dealing with multi-parameter averages over quadratic hypersurfaces satisfying a transversality condition.

\medskip

We have the following set-up:
Let $f \in \mathcal{S}(\R^{d+1})$ and $\gamma_1,\ldots,\gamma_m:\R^d \to \R^{d+1}$ be graph parametrizations of quadratic functions:
\begin{equation*}
\gamma_i(s_1,\ldots,s_d) = (s_1,\ldots,s_d,q_i(s_1,\ldots,s_d)), \quad q_i(s_1,\ldots,s_d) = \frac{\langle s, A_i s \rangle}{2}, \quad A_i \in \R^{d \times d}.
\end{equation*}
We assume the non-degeneracy and symmetry conditions:
\begin{equation}
\label{eq:NonDegeneracy}
\det(A_i) \neq 0, \quad A_i = A_i^t.
\end{equation}
In addition we suppose the transversality assumption for the inverted quadratic forms: 

\smallskip

\emph{There is $c>0$ such that for any $\nu \in \mathbb{S}^d$ there are $1 \leq i_1 < i_2 < \ldots < i_d \leq m$:}
\begin{equation}
\label{eq:TransversalityAssumptionAverages}
|\det (A_{i_1}^{-1} \nu, \ldots , A_{i_d}^{-1} \nu)| \geq c > 0.
\end{equation}

\smallskip

The multi-parameter averages we consider are given by
\begin{equation}
\label{eq:MultiParameterAverages}
\mathcal{A}_{t} f(x) = \int f(x-\sum_{i=1}^m t_i \gamma_i(s_1^{(i)},\ldots,s_d^{(i)}) ) \chi(s^{(1)}) \ldots \chi(s^{(m)}) ds^{(1)} \ldots ds^{(m)}
\end{equation}
with $\chi \in C^\infty_c(B_d(0,1))$.

In the simplest non-trivial case, $d=1$, $\gamma(s) = (s,s^2)$ we have for $f \in \mathcal{S}(\R^2)$:
\begin{equation}
\label{eq:OneDimLocalSmoothing}
\mathcal{A}^{(1)}_t f(x) = \int f(x-t(s,s^2)) \chi(s) ds,
\end{equation}
which is closely related to the propagation of the wave equation in $2+1$-dimensions. Indeed, the wave propagation involves averages over spheres. For comparison recall the sharp local smoothing estimate for the wave equation in $2+1$-dimensions proved by Guth--Wang--Zhang \cite{GuthWangZhang2020}, which reads at the endpoint:
\begin{equation*}
\| \chi_1(t) \mathcal{A}^{(1)}_t f \|_{L_{t,x}^4(\R \times \R^2))} \lesssim_\varepsilon \| f \|_{L^4_\varepsilon(\R^2)}.
\end{equation*}
For technical convenience we consider $\chi_1 \in C^\infty_c(1/2,3/2)$.

\medskip

Presently, we consider the following extension of \eqref{eq:OneDimLocalSmoothing} given in \eqref{eq:TwoParamLocalSmoothing}:
\begin{equation*}
\mathcal{A}^{(2)}_t f(x) = \int f(x- t_1 \gamma_1(s^{(1)}) - t_2 \gamma_2(s^{(2)})) \chi(s^{(1)}) \chi(s^{(2)}) ds^{(1)} ds^{(2)}
\end{equation*}
with
\begin{equation*}
\gamma_1(s) = (s_1,s_2,s_1^2 - s_2^2), \quad \gamma_2(s) = (s_1,s_2,2s_1 s_2).
\end{equation*}

The local smoothing estimate claimed in \eqref{eq:MultiParameterLocalSmoothing} reads:
\begin{equation*}
\| \chi_1(t_1) \chi_1(t_2) \mathcal{A}^{(2)}_{t} f \|_{L^4_{t,x}(\R^{5})} \lesssim_\varepsilon \| f \|_{L^4_{-2+\varepsilon}(\R^3)}.
\end{equation*}


The proof proceeds in three steps:
\begin{itemize}
\item we obtain an oscillatory integral representation, which generalizes the FIO representation for \eqref{eq:OneDimLocalSmoothing},
\item the multi-parameter FIO representation satisfies the conditions of Theorem \ref{thm:SquareFunctionConical} such that we can apply a square function estimate,
\item after invoking duality, the proof is concluded using maximal function estimates, which extend the scalar case.
\end{itemize}
Most of the arguments are valid under the general assumptions given in \eqref{eq:NonDegeneracy} and \eqref{eq:TransversalityAssumptionAverages}. It is merely for our proof of the maximal function estimate in the last step we use a more specific property of the two-parameter average in \eqref{eq:TwoParamLocalSmoothing}. Possibly, a different proof of the maximal function estimate in Section \ref{subsection:MaximalFunction} could rely only on \eqref{eq:NonDegeneracy} and \eqref{eq:TransversalityAssumptionAverages}, which naturally appear several times in the argument. Most of the arguments below are carried out for the outset described by \eqref{eq:MultiParameterAverages} under assumptions \eqref{eq:NonDegeneracy} and \eqref{eq:TransversalityAssumptionAverages}.

\subsection{Oscillatory integral representation}
 
We use Fourier inversion and Fubini's theorem to write
\begin{equation*}
\begin{split}
&\quad \int f(x-\sum_{i=1}^m t_i \gamma_i(s_1^{(i)},\ldots,s_d^{(i)}) ) \chi(s^{(1)}) \ldots \chi(s^{(m)}) ds^{(1)} \ldots ds^{(m)} \\
&= \int e^{i \langle x, \xi \rangle} \hat{f}(\xi) \big( \prod_{i=1}^m \int e^{-it_i \gamma_i(s^{(i)}) \cdot \xi} \chi(s^{(i)}) ds^{(i)} \big) d\xi.
\end{split}
\end{equation*}
By Littlewood-Paley decomposition, it suffices to show a frequency-localized estimate for $N \gg 1$:
\begin{equation*}
\| \mathcal{A}_t (P_N f) \|_{L^4_{t,x}(\R^{d+1})} \lesssim_\varepsilon N^{\varepsilon+s} \| f \|_{L^4_x},
\end{equation*}
where $s=-2$ under the assumptions of Theorem \ref{thm:MultiParameterSmoothing}. Indeed, the small frequencies are estimated by simple kernel estimates.

\smallskip

We compute the asymptotics of the $s$-integrals by the following proposition, in which we recall the asymptotics of parameter-dependent stationary phase as described in \cite[Corollary~1.1.8]{Sogge2017}:

\medskip

Suppose that $\Psi: \R^d \times \R^{d+1} \to \R$, $(s,\xi) \mapsto \Psi(s,\xi)$ is a smooth function, which satisfies for $\xi \in U \subseteq \R^d$:
\begin{equation*}
\nabla_s \Psi(\xi,s_{\xi}) = 0
\end{equation*}
and
\begin{equation*}
\det \partial^2_s \Psi(\xi,s_{\xi}) \neq 0.
\end{equation*}
Then, by the implicit function theorem there is a smooth function $s(\xi)$ such that
\begin{equation*}
\nabla_s \Psi(\xi,s(\xi)) = 0.
\end{equation*}
Consider the oscillatory integral
\begin{equation*}
I(\xi,t) = \int_{\R^d} e^{i t \Psi(s,\xi)} a(t,s,\xi) ds
\end{equation*}
with $a$ having small enough $\xi$-support such that $s(\xi)$ is the only stationary point for $\xi$ and $a$ satisfies the symbol bounds:
\begin{equation*}
| \partial_t^m \partial_{\xi}^{\alpha} \partial_{s}^{\beta} a(t,\xi,s)| \lesssim (1+|t|)^{-|\alpha|}.
\end{equation*}

We have the following:
\begin{proposition}
\label{prop:OscillatoryIntegralExpansion}
Suppose that $a$ and $\Psi$ are as above. Then for every $\alpha \in \N_0$ and $\beta \in \N_0^d$ we have
\begin{equation*}
\big| \partial_t^{\alpha} \partial_{\xi}^{\beta} \big( e^{-i t \Psi(\xi,s)} I(\xi,t) \big) \big| \leq C_{\alpha \gamma} (1+|t|)^{-\frac{n}{2}-\alpha}.
\end{equation*}
\end{proposition}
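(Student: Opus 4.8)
The plan is to localize to the stationary point $s(\xi)$ and then invoke the parameter-dependent stationary phase expansion. First I would shrink the $\xi$-support of $a$ using the implicit function theorem together with $\det\partial_s^2\Psi(\xi,s(\xi))\neq 0$, so that $s(\xi)$ stays in a fixed compact subset of the $s$-support of $a$ and $\partial_s^2\Psi(\xi,s(\xi))$ is uniformly invertible; on the resulting compact $\xi$-set all derivatives of $\Psi$ and of $\xi\mapsto s(\xi)$ are then bounded. Set $\Phi(s,\xi)=\Psi(s,\xi)-\Psi(\xi,s(\xi))$, so that $s(\xi)$ is the unique critical point of $\Phi(\cdot,\xi)$ with critical value $0$, and write $a=\psi a+(1-\psi)a$ where $\psi\in C^\infty_c$ equals $1$ near the critical set and is supported where $|\nabla_s\Phi(s,\xi)|\gtrsim|s-s(\xi)|$ uniformly in $\xi$.

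On $\operatorname{supp}(1-\psi)$ the phase is non-stationary, so I would integrate by parts repeatedly with $L=(it|\nabla_s\Phi|^2)^{-1}\langle\nabla_s\Phi,\nabla_s\cdot\rangle$. Each application produces a factor $(1+|t|)^{-1}$ and bounded $\xi$-derivatives of $\Phi$; commuting $\partial_t^\alpha\partial_\xi^\beta$ through the integral inserts at most a fixed power of $(1+|t|)$ and bounded derivatives of $\Phi$ and $a$. Hence for every $N$ this contribution and all its $\partial_t,\partial_\xi$-derivatives are $O_N((1+|t|)^{-N})$, which is harmless.

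For $\psi a$ I would apply the stationary phase asymptotics with parameter $\xi$, as in \cite[Corollary~1.1.8]{Sogge2017}: either directly, or via the parametrized Morse lemma reducing $\Phi(\cdot,\xi)$ to the quadratic form $\tfrac12\langle\partial_s^2\Psi(\xi,s(\xi))u,u\rangle$ and expanding the resulting Gaussian integral. This yields, for each $N$, that $e^{-it\Psi(\xi,s(\xi))}I(\xi,t)$ equals $(1+|t|)^{-n/2}\sum_{j<N}t^{-j}c_j(t,\xi)$ plus a remainder $R_N(t,\xi)$ with $|R_N(t,\xi)|\lesssim(1+|t|)^{-n/2-N}$, where $c_j(t,\xi)=(\mathcal{L}_j(\psi a))(t,s(\xi),\xi)$ and $\mathcal{L}_j$ is a differential operator of order $2j$ in the $s$-variables whose coefficients are universal expressions in $(\partial_s^2\Psi(\xi,s(\xi)))^{-1}$ and in higher $s$-derivatives of $\Psi$ at $s(\xi)$, hence smooth in $\xi$; the same structure and bounds persist for all $\partial_\xi$-derivatives of $R_N$, by the symbol hypotheses on $a$ and the smoothness of $s(\cdot)$.

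It then remains to differentiate term by term: in the $j$-th term the only $t$-decaying factor besides the symbol $c_j$ is $t^{-n/2-j}$, so $\partial_t^\alpha$ acting there produces $t^{-n/2-j-\alpha}$, while $\partial_\xi^\beta$ keeps $c_j$ bounded because $s(\cdot)$ and the Hessian inverse are smooth and the bounds on $a$ are stable under $\partial_t$, $\partial_s$, $\partial_\xi$; choosing $N>\alpha$ controls $R_N$ and its derivatives by $(1+|t|)^{-n/2-\alpha}$ as well, and summing the finitely many terms gives the claim. The main obstacle I anticipate is not conceptual but is the uniformity over the $\xi$-support — the ``with parameters'' refinement of stationary phase: one must verify that the location of $s(\xi)$, the invertibility of the Hessian, the Morse coordinates, and the remainder estimates are all uniform in $\xi$, and carefully track that each $\partial_t$ genuinely improves, rather than merely preserves, the power of $(1+|t|)$.
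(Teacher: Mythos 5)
The paper does not prove this proposition at all: it is explicitly ``recalled'' from \cite[Corollary~1.1.8]{Sogge2017}, so there is no in-paper argument to compare against. Your outline is the standard proof of that cited result (non-stationary-phase decay away from $s(\xi)$, Morse-lemma/stationary-phase expansion with parameter near $s(\xi)$, uniformity from compactness of the $\xi$-support), and it is essentially sound. The one step that carries the real content of the statement --- and that you flag but defer --- is why each $\partial_t$ \emph{improves} the decay: after conjugating by $e^{-it\Psi(\xi,s(\xi))}$, a $t$-derivative hitting the remaining exponential brings down $i\Phi(s,\xi)=i(\Psi(s,\xi)-\Psi(\xi,s(\xi)))$, which vanishes to \emph{second} order at $s=s(\xi)$, and the stationary phase lemma applied to an amplitude vanishing to order $2k$ at the critical point gains an extra factor $t^{-k}$; iterating this (an induction on amplitudes of the form $\Phi^k\,\partial_t^m a$) is what makes your term-by-term differentiation of the expansion, including of the remainder $R_N$, legitimate. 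Note also that your step ``$\partial_t c_j=O((1+|t|)^{-1})$'' requires the symbol hypothesis on $a$ to include decay under $t$-derivatives (as in Sogge's actual hypothesis $|\partial_t^m\partial_\xi^\alpha\partial_s^\beta a|\lesssim(1+|t|)^{-m}$); the bound as printed in the paper, $(1+|t|)^{-|\alpha|}$, does not literally give this, so you are right to read it in the Sogge form --- in the paper's application the amplitude is $t$-independent anyway.
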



In the following let $\xi = (\xi',\xi_{d+1}) \in \R^{d} \times \R$. We apply the above proposition to the phase function:
\begin{equation*}
\Phi_i(s_1,\ldots,s_d,\xi)  = \gamma_i(s_1,\ldots,s_d) \cdot \xi = \sum_{i=1}^d s_i \xi_i + q_i(s) \xi_{d+1}.
\end{equation*}
Note that for $|\xi_{d+1}| \ll |\xi'|$ there is no stationary point and we have rapid decay:
\begin{equation*}
\big| \int e^{-i t_i \Phi_i(s,\xi)} \chi(s) ds \big| \lesssim (1+|\xi|)^{-N}.
\end{equation*}
Hence, the essential contribution stems from $|\xi_{d+1}| \sim |\xi| \sim N \gg 1$. We have the following:
\begin{lemma}
For $|\xi_{d+1}| \sim |\xi| \sim N \gg 1$ and $t \sim 1$ we have the following:
\begin{equation*}
\int e^{-it \gamma_i(s^{(i)}) \cdot \xi} \chi(s^{(i)}) ds^{(i)} =  e^{i t \langle A_i^{-1} \xi', \xi' \rangle/4 \xi_{d+1}} a_i(t,\xi)
\end{equation*}
with
\begin{equation*}
| \partial_t^{\alpha} \partial_{\xi}^{\beta} a_i(t,\xi)| \lesssim_{\alpha,\beta} (1+|\xi|)^{-\frac{d}{2} - |\beta|}
\end{equation*}
\end{lemma}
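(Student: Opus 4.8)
The plan is to obtain the claimed asymptotics by a parameter-dependent stationary phase expansion in the $s$-variable, with the large parameter $t|\xi|\sim N$; that is, to apply Proposition \ref{prop:OscillatoryIntegralExpansion} with phase $\Psi(s,\xi)=\Phi_i(s,\xi)=\gamma_i(s)\cdot\xi=\langle s,\xi'\rangle+\tfrac12\langle s,A_is\rangle\,\xi_{d+1}$ and amplitude $\chi(s)$. First I would locate the critical point: since $\nabla_s\Phi_i(s,\xi)=\xi'+\xi_{d+1}A_is$ and $A_i$ is invertible by \eqref{eq:NonDegeneracy}, the unique critical point is $s_\xi=-A_i^{-1}\xi'/\xi_{d+1}$, which depends smoothly on $\xi$ on the region $|\xi_{d+1}|\sim|\xi|$, and which lies in the interior of $\operatorname{supp}\chi=B_d(0,1)$ once $\xi$ is restricted (by an angular cutoff that is incorporated into $a_i$) to a small conic neighbourhood of the $\xi_{d+1}$-axis; for $\xi$ outside that neighbourhood $\nabla_s\Phi_i$ is bounded below on $\operatorname{supp}\chi$ and repeated integration by parts yields rapid decay, consistent with the remark preceding the lemma.

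Next I would check the hypotheses of Proposition \ref{prop:OscillatoryIntegralExpansion} and compute the critical value. The Hessian is $\partial_s^2\Phi_i(s_\xi,\xi)=\xi_{d+1}A_i$, which has $|\det\partial_s^2\Phi_i|\sim|\xi|^d|\det A_i|\sim N^d$ and constant signature on the relevant conic region, hence is nondegenerate there. Completing the square, and using $A_i=A_i^t$ from \eqref{eq:NonDegeneracy} (so that $A_i^{-1}$ is symmetric), gives $\Phi_i(s_\xi,\xi)$ as a scalar multiple of $\langle A_i^{-1}\xi',\xi'\rangle/\xi_{d+1}$, which is exactly the exponent appearing in the statement.

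Finally I would read off the amplitude: Proposition \ref{prop:OscillatoryIntegralExpansion} expresses $\int e^{-it\gamma_i(s)\cdot\xi}\chi(s)\,ds$ as $e^{-it\Phi_i(s_\xi,\xi)}a_i(t,\xi)$, where $a_i$ is the full stationary phase series, whose leading term is a constant times $|\det(\xi_{d+1}A_i)|^{-1/2}\chi(s_\xi)\sim N^{-d/2}$ and whose remaining terms carry extra negative powers of $t|\xi|\sim N$. The symbol bound $|\partial_t^\alpha\partial_\xi^\beta a_i(t,\xi)|\lesssim_{\alpha,\beta}(1+|\xi|)^{-d/2-|\beta|}$ then follows because each $\xi$-derivative landing on $|\xi_{d+1}|^{-d/2}$, on $\chi(s_\xi)$ (note $|\partial_\xi s_\xi|\lesssim|\xi|^{-1}$), or on a lower-order term of the expansion gains a factor $|\xi|^{-1}$, while $t$-derivatives are harmless since $t\sim 1$ and the original amplitude $\chi$ is $t$-independent. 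The one point requiring care is the bookkeeping at the edge of the conic region, where $s_\xi$ approaches $\partial B_d(0,1)$: one fixes the angular cutoff built into $a_i$ to be supported strictly inside the set where $s_\xi$ is interior, so that the stationary-phase and non-stationary regimes are handled separately and never interact; the rest is the routine uniform bound for parameter-dependent stationary phase.
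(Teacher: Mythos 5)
Your proposal is correct and follows essentially the same route as the paper: locate the stationary point $s^*=-cA_i^{-1}\xi'/\xi_{d+1}$ using the invertibility of $A_i$, compute the critical value $-\langle A_i^{-1}\xi',\xi'\rangle/4\xi_{d+1}$, and invoke Proposition \ref{prop:OscillatoryIntegralExpansion} to obtain the amplitude with the stated symbol bounds. Your additional care about the conic localization and the position of $s^*$ relative to $\operatorname{supp}\chi$ is consistent with (and slightly more explicit than) the paper's remark on non-stationary decay for $|\xi_{d+1}|\ll|\xi'|$; just pin down the exact constant in the critical value rather than leaving it as ``a scalar multiple.''
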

\begin{proof}
We compute the stationary point:
\begin{equation*}
\nabla_s \Phi_i(s^*,\xi) = 0 \Leftrightarrow \xi' + 2 A_i(s^*) \xi_{d+1} = 0.
\end{equation*}
Hence, by the non-degeneracy of $A_i$, we find
\begin{equation*}
s^* = - A_i^{-1} \xi'/ 2 \xi_{d+1}.
\end{equation*}
We obtain
\begin{equation*}
\Phi_i(s^*,\xi) = - \langle A_i^{-1} \xi', \xi' \rangle/2 \xi_{d+1} + \langle A_i^{-1} \xi', \xi' \rangle/4 \xi_{d+1} = - \langle A_i^{-1} \xi', \xi' \rangle/4 \xi_{d+1}.
\end{equation*}
We obtain the expansion (see Proposition \ref{prop:OscillatoryIntegralExpansion})
\begin{equation*}
\int e^{-i t_i \gamma_i(s^{(i)}) \cdot \xi} \chi(s^{(i)}) ds^{(i)} = e^{i t_i \langle A_i^{-1} \xi', \xi' \rangle/4 \xi_{d+1}} a_i(t,\xi) \text{ for } |\xi_{d+1}| \gtrsim |\xi'| \gtrsim 1
\end{equation*}
with
\begin{equation*}
| \partial_t^{\alpha} \partial_{\xi}^{\beta} a_i(t,\xi)| \lesssim_{\alpha,\beta} (1+|\xi|)^{-\frac{d}{2}}.
\end{equation*}
\end{proof}

Let $\beta \in C^{\infty}_c((B_{d+1}(0,2) \backslash B_{d+1}(0,1/2))$. Plugging in the expansion of the integrals in $s$ we have obtained a multi-parameter Fourier integral operator after microlocalization:
\begin{equation*}
\begin{split}
\mathcal{A}_t P_N f(x) &= \int e^{i \langle x, \xi \rangle} \beta_N(\xi) \hat{f}(\xi) \big( \prod_{i=1}^m \int e^{-it_i \gamma_i(s^{(i)}) \cdot \xi} \chi(s^{(i)}) ds^{(i)} \big) d\xi \\
&= \int e^{i ( \langle x, \xi \rangle + \sum_{i=1}^m t_i \langle A_i^{-1} \xi',\xi' \rangle/4 \xi_{d+1} )} a(t,\xi) \beta_N(\xi) \hat{f}(\xi) d\xi.
\end{split}
\end{equation*}
Now we expand $a$ in a Fourier series in $t$ 
\begin{equation*}
a(t,\xi) = \tilde{\chi}(t) \sum_{k \in \Z^{m}} e^{i \langle k , t \rangle} \hat{a}(k,\xi)
\end{equation*}
for a suitable cutoff $\tilde{\chi} \in C^\infty_c$ and absorb $a$ into $\beta$ to dominate
\begin{equation*}
| \mathcal{A}_t P_N f|(x) \lesssim N^{- \frac{m d}{2}} \big| \int e^{i ( \langle x, \xi \rangle + \sum_{i=1}^m t_i \langle A_i^{-1} \xi',\xi' \rangle/4 \xi_{d+1} )} \tilde{\beta}_N(\xi) \hat{f}(\xi) d\xi \big|
\end{equation*}
for some $\tilde{\beta} \in C^\infty_c(B_{d+1}(0,2) \backslash B_{d+1}(0,1/2))$.

\smallskip

Let $\Phi:\R^{d+1} \times \R^m \times \R^{d+1} \to \R$ with
\begin{equation*}
\Phi(x,t;\xi) = \langle x, \xi \rangle + \sum_{i=1}^m t_i \langle A_i^{-1} \xi',\xi' \rangle/4 \xi_{d+1}
\end{equation*}
and focus in the following on the estimate 
\begin{equation}
\label{eq:FourierIntegralReduction}
\| \chi(t) \mathcal{F} f \|_{L^4_{t,x}(\R^n)} \lesssim_\varepsilon N^{\varepsilon} \| f \|_{L^4_x} \text{ with }
\mathcal{F} f(t,x) = \int e^{i \Phi(x,t;\xi)} \beta_N(\xi) \hat{f}(\xi) d\xi
\end{equation}
and $\chi(t) = \prod_{i=1}^m \chi_1(t_i)$, $\chi_1 \in C^\infty_c(1/2,3/2)$. Note that for Theorem \ref{thm:MultiParameterSmoothing} the smoothing of order $-2 = md/2$ stems from the symbol.

 We summarize our findings in the following proposition:
\begin{proposition}
The multi-parameter local smoothing estimate \eqref{eq:MultiParameterLocalSmoothing} is implied by \eqref{eq:FourierIntegralReduction}.
\end{proposition}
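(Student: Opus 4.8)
The statement collects the reductions carried out in this subsection, so the plan is simply to chain them together; the only ingredients not already displayed above are the treatment of the low-frequency part, the summation of the Littlewood--Paley pieces, and making rigorous the absorption of the $t$-Fourier series. Throughout, ``uniformly'' quantities will be allowed to depend on finitely many seminorms of the fixed cutoffs $\chi,\chi_1,\beta$.

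It suffices to prove the frequency-localized estimate
\[
\| \chi(t)\, \mathcal{A}_t P_N f \|_{L^4_{t,x}(\R^{d+1+m})} \lesssim_\varepsilon N^{-\frac{md}{2}+\varepsilon} \| f \|_{L^4_x}, \qquad N \gg 1 .
\]
Granting this, the contribution of $P_0 f$ is bounded by $\|f\|_{L^4_x}$ because the convolution kernel of $\mathcal{A}_t$ is bounded uniformly for $t \sim 1$ (admissible, the Sobolev index being negative), and the dyadic pieces are summed by the triangle inequality: choosing the $\varepsilon$ in \eqref{eq:FourierIntegralReduction} small and using the Mikhlin bound $\| P_N f \|_{L^4_x} \lesssim N^{\frac{md}{2}-2\varepsilon} \| f \|_{L^4_{-\frac{md}{2}+2\varepsilon}}$ (with a fattened projection), the resulting series $\sum_{N\ge 1} N^{-\varepsilon}\| f \|_{L^4_{-\frac{md}{2}+2\varepsilon}}$ converges. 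Relabeling $\varepsilon$ and noting that $d=2$, $m=2$ give $-\tfrac{md}{2}=-2$ and ambient dimensions $d+1=3$, $d+1+m=5$, this is exactly \eqref{eq:MultiParameterLocalSmoothing}.

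To derive the frequency-localized estimate from \eqref{eq:FourierIntegralReduction}, split $\widehat{P_N f}$ into the region $|\xi_{d+1}| \sim |\xi| \sim N$ and its complement $|\xi_{d+1}| \ll |\xi'|$; on the complement each $s^{(i)}$-integral $\int e^{-i t_i \gamma_i(s^{(i)})\cdot\xi}\chi(s^{(i)})\,ds^{(i)}$ has no stationary point and is $O(N^{-100})$ by non-stationary phase, so the cutoff $\beta_N(\xi)=\beta(\xi/N)$ supported where $|\xi_{d+1}|\sim|\xi|\sim N$ may be inserted with negligible error. On this support the Lemma above gives $\int e^{-i t_i \gamma_i(s^{(i)})\cdot\xi}\chi(s^{(i)})\,ds^{(i)} = e^{i t_i \langle A_i^{-1}\xi',\xi'\rangle/4\xi_{d+1}}\,a_i(t_i,\xi)$ with $|\partial_t^\alpha\partial_\xi^\beta a_i|\lesssim N^{-d/2-|\beta|}$, and multiplying over $i=1,\dots,m$ produces the phase $\Phi(x,t;\xi)$ together with a symbol $a(t,\xi)=\prod_i a_i(t_i,\xi)$ of order $-\tfrac{md}{2}$. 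Expanding $a(t,\xi)=\tilde\chi(t)\sum_{k\in\Z^m}e^{i\langle k,t\rangle}\hat a(k,\xi)$ for a cutoff $\tilde\chi\in C^\infty_c$ that is $1$ on the $t$-support, integration by parts in $t$ yields $|\partial_\xi^\beta\hat a(k,\xi)|\lesssim_M (1+|k|)^{-M}N^{-\frac{md}{2}-|\beta|}$ for every $M$, and the extra phase $\langle k,t\rangle$, being independent of $\xi$, is absorbed into the $t$-cutoff. Hence
\[
| \chi(t)\,\mathcal{A}_t P_N f(x)| \lesssim_M N^{-\frac{md}{2}} \sum_{k\in\Z^m}(1+|k|)^{-M}\,| \chi(t)\, \mathcal{F}^{(k)} f(t,x)|,
\]
where each $\mathcal{F}^{(k)}$ is of the form \eqref{eq:FourierIntegralReduction} with the normalized symbol $(1+|k|)^{M}N^{\frac{md}{2}}\hat a(k,\cdot)\in C^\infty_c(B_{d+1}(0,2)\backslash B_{d+1}(0,1/2))$, whose relevant seminorms are $\lesssim_M 1$ uniformly in $k$. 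Applying \eqref{eq:FourierIntegralReduction} to each $\mathcal{F}^{(k)}$ and summing the rapidly convergent series in $k$ gives the target estimate.

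\textbf{Main obstacle.} There is no essential difficulty: the proposition is bookkeeping that assembles the reductions of this subsection. The one point demanding genuine care is the uniformity in $k$ — that the $t$-Fourier coefficients $\hat a(k,\xi)$ retain the $\xi$-symbol bounds together with arbitrary polynomial decay in $k$, and that the linear phases $\langle k,t\rangle$ are harmless precisely because they do not couple to $\xi$ — which is exactly where the compact support and smoothness of $\chi_1$ (hence of $\tilde\chi$) are used.
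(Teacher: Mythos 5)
Your proof is correct and follows the same route the paper takes: Littlewood--Paley localization to $|\xi|\sim N$, disposal of the low frequencies by kernel bounds, stationary phase to pass to the FIO with phase $\Phi$ and amplitude of order $-\tfrac{md}{2}$, Fourier series expansion of the amplitude in $t$, and application of \eqref{eq:FourierIntegralReduction} to each piece. You supply the bookkeeping the paper leaves implicit --- in particular the uniform-in-$k$ symbol seminorm bounds after the Fourier series, the innocuousness of the $e^{i\langle k,t\rangle}$ phase (as a unimodular factor not coupling to $\xi$), and the Mikhlin step that converts the frequency-localized bound into the $L^4_{-2+\varepsilon}$ estimate --- which is exactly the care the statement requires.
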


Consequently, with $|\xi'| \lesssim |\xi_{d+1}| \sim N \gg 1$ the Fourier transform of $\chi(t) \mathcal{F} f$ in all variables is in the unit neighborhood of the graph $\Gamma \mathcal{M}_{q,N}$:
\begin{equation*}
\xi \mapsto (\xi_1,\ldots,\xi_{d+1},\frac{\langle A_1^{-1} \xi', \xi' \rangle}{4 \xi_{d+1}},\ldots,\frac{\langle A_m^{-1} \xi', \xi' \rangle}{4 \xi_{d+1}} ); \quad \xi \in \R^{d+1}, \, |\xi_{d+1}| \sim |\xi| \sim N.
\end{equation*}
This is a multi-parameter cone over the base curve:
\begin{equation*}
\xi' \mapsto (\xi', \frac{\langle A_1^{-1} \xi', \xi' \rangle}{4}, \ldots, \frac{\langle A_m^{-1} \xi', \xi' \rangle}{4}); \quad \xi' \in \R^d, \; |\xi'| \lesssim N.
\end{equation*}
The transversality assumption \eqref{eq:TransversalityAssumptionAverages} will allow us to invoke the square function estimates for cones over quadratic manifolds obtained in previous sections.

\subsection{Applying the square function estimate}

By non-stationary phase and the compact parameter range $|t| \leq 1$, we can localize the support of $f$ to unit size. More precisely, by Fourier inversion let $\mathcal{F} f = \int K_N(t,x-y) f(y) dy$. We obtain the following simple kernel estimate:
\begin{lemma}
For $|x| \gg 1$ it holds the following estimate:
\begin{equation*}
\chi(t) |K_N(t,x)| \lesssim_N (1+|x|)^{-N}.
\end{equation*}
\end{lemma}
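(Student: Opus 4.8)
The plan is to prove this by non-stationary phase in the frequency variable. Unwinding the construction, $K_N(t,x)$ equals, up to a dimensional constant, $\int_{\R^{d+1}} e^{i\Phi(x,t;\xi)}\beta_N(\xi)\,d\xi$ with $\Phi(x,t;\xi)=\langle x,\xi\rangle+\sum_{i=1}^m t_i\,g_i(\xi)$, $g_i(\xi)=\langle A_i^{-1}\xi',\xi'\rangle/(4\xi_{d+1})$, and $\beta_N$ a symbol supported on the truncated cone $\{\,|\xi|\sim N,\ |\xi_{d+1}|\sim|\xi|\,\}$ (the rapidly decaying contribution from $|\xi_{d+1}|\ll|\xi'|$ having been absorbed at the microlocalization step) with $|\partial_\xi^\alpha\beta_N|\lesssim_\alpha N^{-|\alpha|}$. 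The first observation is that on $\mathrm{supp}\,\beta_N$, where $\xi_{d+1}$ stays away from $0$, each $g_i$ is positively homogeneous of degree one, so $\nabla_\xi g_i$ is homogeneous of degree zero and, more generally, $|\nabla_\xi^k g_i(\xi)|\lesssim_k N^{1-k}$ there. In particular, for $t$ in the compact support of $\chi$ one has the uniform bound $\bigl|\sum_{i=1}^m t_i\nabla_\xi g_i(\xi)\bigr|\le C_1$, where $C_1$ depends only on $\max_i\|A_i^{-1}\|$ and $\mathrm{supp}\,\chi_1$; this fixes the precise meaning of ``$|x|\gg 1$'' in the statement, namely $|x|\ge 2C_1$.

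For such $x$ the full phase gradient $a:=\nabla_\xi\Phi(x,t;\xi)=x+\sum_i t_i\nabla_\xi g_i(\xi)$ satisfies $\tfrac12|x|\le|a|\le 2|x|$ on $\mathrm{supp}\,\beta_N$, so the phase has no critical point. I would then integrate by parts with the operator $L=-i\,|a|^{-2}\,a\cdot\nabla_\xi$, which satisfies $Le^{i\Phi}=e^{i\Phi}$, moving $M$ copies of its transpose onto $\beta_N$. Combining Leibniz's rule with (i) $|\partial_\xi^\alpha\beta_N|\lesssim N^{-|\alpha|}$, (ii) $|\nabla_\xi^k\Phi|\lesssim N^{1-k}$ for $k\ge 2$ — the term $\langle x,\xi\rangle$ contributing nothing to second or higher $\xi$-derivatives — and (iii) $|a|\gtrsim|x|$, one checks that each application of $L^{\top}$ contributes at least a factor $|x|^{-1}$, so that $|(L^{\top})^M\beta_N(\xi)|\lesssim_{M,N}|x|^{-M}$ on $\mathrm{supp}\,\beta_N$. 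Since the latter has volume $O(N^{d+1})$, this yields $|K_N(t,x)|\lesssim_{M,N}N^{d+1}\,|x|^{-M}$ for every $M\in\N$ and every $|x|\ge 2C_1$; taking $M=N$ and using $|x|\sim 1+|x|$ in this range gives $\chi(t)|K_N(t,x)|\lesssim_N(1+|x|)^{-N}$.

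Since the implied constant is permitted to depend on $N$, there is no genuine obstacle here; the one point that deserves a line of care is that the lower bound $|\nabla_\xi\Phi|\gtrsim|x|$ is stable under the differentiations generated by $L^{\top}$ — equivalently, that $\nabla_\xi^2\Phi=\mathcal{O}(N^{-1})$ is negligible against $|a|^2\gtrsim|x|^2$ — and this is exactly what the degree-one homogeneity of the $g_i$ makes transparent, the diagonal term $\langle x,\xi\rangle$ dropping out of all higher $\xi$-derivatives. One should also keep in mind that the conclusion is only asserted for $|x|\gg 1$: for bounded $|x|$ the integral is merely $O(N^{d+1})$ and no decay is claimed, which is consistent with the statement.
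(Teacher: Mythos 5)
Your argument is correct and is exactly the non-stationary phase computation the paper invokes (the paper states the lemma without proof, attributing it to ``non-stationary phase and the compact parameter range $|t|\le 1$''). Your use of the degree-one homogeneity of $\langle A_i^{-1}\xi',\xi'\rangle/(4\xi_{d+1})$ on the truncated cone to bound $|\nabla_\xi\Phi - x|$ uniformly, followed by repeated integration by parts, is the intended route, and your explicit quantification of ``$|x|\gg 1$'' is a welcome addition.
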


Then, after rescaling $\xi \to \xi/N$, $x \to N x$, $t \to N x$ and invoking translation invariance, it suffices to show the smoothing estimate:
\begin{equation*}
\big\| \int e^{i \Phi(x,t;\xi)} \chi(\xi) \hat{f}(\xi) d\xi \big\|_{L^4_{t,x}(B_{n}(0,N))} \lesssim_\varepsilon N^{\frac{m}{4}+\varepsilon} \| f \|_{L^4_x}.
\end{equation*}
Above $n =d+1+m$, $\chi \in C^\infty_c(B_{d+1}(0,2))$ with $|\xi_{d+1}| \sim |\xi'| \sim 1$ for $(\xi',\xi_{d+1}) \in \text{supp}(\chi)$. We dominate
\small
\begin{equation*}
\big\| \int e^{i \Phi(x,t;\xi)} \chi(\xi) \hat{f}(\xi) d\xi \big\|_{L^4_{t,x}(B_{d+1+m}(0,N))} \lesssim \big\| \chi_N(t,x) \int e^{i \Phi(x,t;\xi)} \chi(\xi) \hat{f}(\xi) d\xi \big\|_{L^4_{t,x}(\R^n)}
\end{equation*}
\normalsize
with $\text{supp}(\widehat{\chi_N}) \subseteq B(0,c/N)$. The function $F_N(t,x) = \chi_N(t,x) \int e^{i \Phi(x,t;\xi)} \chi(\xi) \hat{f}(\xi) d\xi$ satisfies $\text{supp}(\widehat{F_N}) \subseteq \mathcal{N}_{\frac{c}{N}}(\Gamma \mathcal{M}_{q,1})$.

Consequently, we may apply Theorem \ref{thm:SquareFunctionConical} to obtain
\begin{equation*}
\| F_N \|_{L^4_{t,x}(\R^n)} \lesssim_\varepsilon N^\varepsilon \big\| \big( \sum_{\theta \in \Theta_{\frac{1}{N}}} |F_{N \theta}|^2 \big)^{\frac{1}{2}} \big\|_{L^4_{t,x}(\R^n)}
\end{equation*}
with $\Theta_{\frac{1}{N}} = \Theta_{\frac{1}{N}}(\Gamma \mathcal{M}_{q,1})$ denoting the canonical covering of the $1/N$-neighborhood of $\Gamma \mathcal{M}_{q,1}$ introduced in the previous sections.

We reverse the scaling and by standard arguments, we reduce to establishing the estimate:
\begin{equation}
\label{eq:MultiParameterSquareFunctionReduction}
\big\| \chi_1(t) \big( \sum_{\theta \in \Theta_{\frac{1}{N}}} \big| \int e^{i \Phi(x,t;\xi)} \chi_{\theta}(\xi) \beta_N(\xi) \hat{f}(\xi) d\xi \big|^2 \big)^{\frac{1}{2}} \big\|_{L^4_{t,x}(\R^n)} \lesssim_\varepsilon N^{\varepsilon} \| f \|_{L^4}
\end{equation}
with
\begin{equation*}
\big| \frac{\xi'}{\xi_{d+1}} - \xi'_\theta \big| \lesssim N^{-\frac{1}{2}} \text{ for } \xi \in \text{supp}(\chi_{\theta}).
\end{equation*}

\subsection{Conclusion via maximal function estimates}
\label{subsection:MaximalFunction}
The frequency projection to $\theta$ reduces the propagation with $\mathcal{F}$ to a translation. By a duality argument, we reduce \eqref{eq:MultiParameterSquareFunctionReduction} to a maximal function estimate involving averages over the translations induced by $\mathcal{F}$ on $\theta$-frequencies.
The proof of \eqref{eq:MultiParameterSquareFunctionReduction} becomes a variant of the arguments in \cite{MockenhauptSeegerSogge1992}.

\smallskip

\subsubsection{Reduction to a maximal function estimate}
Squaring the left hand-side we find by duality
\begin{equation}
\label{eq:MaximalFunctionReductionI}
\begin{split}
&\quad \int \big( \sum_{\theta \in \Theta_N} \big| \int_{\theta: | \frac{\xi'}{\xi_{d+1}} - \xi'_{\theta} |  \lesssim N^{-\frac{1}{2}}} e^{i \Phi(x,t;\xi)} \chi(\xi) \hat{f}(\xi) d\xi \big|^2 \big)^2 dx dt
\\
 &= \sup_{\| g \|_{L^2} = 1} \sum_{\theta} \big| \iint K_{\theta}(x,t;y) f_\theta(y) dy \big|^2 \; \bar{g}(x,t) dx dt
 \end{split}
 \end{equation}
And further
\begin{equation}
\label{eq:MaximalFunctionReductionII}
\begin{split} 
\eqref{eq:MaximalFunctionReductionI} &\lesssim \sup_{\| g \|_2 = 1} \int_{\R^{d+1}} \sum_{\theta} |f_{\theta}|^2(y) \big\{ \iint |K_{\theta}(x,t;y)| |g(x,t)| dx dt \big\} dy \\
&\lesssim \big( \int_{\R^{d+1}} \big( \sum_{\theta} |f_{\theta}|^2 \big)^2 \big)^{\frac{1}{2}} \big( \int_{\R^{d+1}} |\sup_{\theta} \iint |K_{\theta}(x,t;y)| |g(x,t)| dx dt |^2 dy \big)^{\frac{1}{2}} \\
&\lesssim \| f \|^2_{L^4} \| \sup_{\theta} |A_{\theta} g|(y) \|_{L^2(\R^{d+1})}.
\end{split}
\end{equation}
In the ultimate estimate we have set for $\theta \in \Theta_N$:
\begin{equation*}
A_{\theta} g(y) = \iint K_{\theta}(x,t;y) g(x,t) dx dt 
\end{equation*}
and used a standard square function estimate in $L^4$. This square function estimate is a variant of Littlewood-Paley orthogonality.

\smallskip

We lay the last brick in the proof of Theorem \ref{thm:MultiParameterSmoothing}, which is the following lemma:
\begin{lemma}
\label{lem:MaximalFunctionEstimate}
It holds
\begin{equation*}
\| \sup_{\theta \in \Theta_N} |A_{\theta} g|(y) \|_{L^2(\R^3)} \lesssim_\varepsilon N^{\varepsilon} \| g \|_{L^2_{x,t}(\R^5)}.
\end{equation*}
\end{lemma}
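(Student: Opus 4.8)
The plan is to unwind the operator $A_\theta$ into a geometrically transparent averaging operator and then control its maximal function by a square function plus an $L^2$ bound on each individual average. Recall that $A_\theta g(y) = \iint K_\theta(x,t;y)\, g(x,t)\,dx\,dt$ is the adjoint of the $\theta$-piece of the oscillatory operator $\mathcal F$. Since $\chi_\theta$ localizes the frequency $\xi'/\xi_{d+1}$ to an $N^{-1/2}$-ball around $\xi'_\theta$, on that piece the phase $\Phi(x,t;\xi) = \langle x,\xi\rangle + \sum_i t_i \langle A_i^{-1}\xi',\xi'\rangle/4\xi_{d+1}$ is, to leading order, linear in $(x,t)$: the spatial gradient is $\xi$ and the $t_i$-gradient is $\langle A_i^{-1}\xi',\xi'\rangle/4\xi_{d+1} \approx q_i^*(\xi'_\theta)$ (the inverted quadratic form evaluated at the center). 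Thus $A_\theta g(y)$ is, up to a rapidly-decaying tail and a smooth cutoff at the relevant scales, an average of $g$ over a plate $P_\theta(y)$: a neighborhood of the set $\{(y + \sum_i t_i v_i^\theta, t) : t \in \mathrm{supp}(\chi_1)^m\}$, where $v_i^\theta$ is the point on the $i$-th base curve $\gamma_i$ dual to direction $\xi'_\theta$ (thickened to width $N^{-1/2}$ transversally in the base directions and $N^{-1}$ in the normal directions). So the first step is to make this reduction precise: $|A_\theta g|(y) \lesssim M_\theta g(y) + (\text{error})$, where $M_\theta$ is honest averaging over the plate $P_\theta$ and the error is $O_K(N^{-K})\|g\|_1$-type.

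Next I would bound $\sup_\theta M_\theta g$ in $L^2$. The standard $L^2$ maximal-function technique (Mockenhaupt--Seeger--Sogge, as indicated in the text) is to write, after a Sobolev/fundamental-theorem-of-calculus argument in the direction parameter $\xi'_\theta \in \mathbb R^d$ (there are $\sim N^{d/2}$ values of $\theta$, each labeled by a point on an $N^{-1/2}$-net of a $d$-dimensional parameter set),
\begin{equation*}
\sup_\theta |M_\theta g(y)|^2 \lesssim \sum_\theta \Big( |M_\theta g(y)|^2 + N^{-1/2}\, |M_\theta g(y)|\, |\nabla_\theta M_\theta g(y)| \Big),
\end{equation*}
and then apply Cauchy--Schwarz in $\theta$ together with the $L^2$ bounds $\|M_\theta g\|_{L^2_y} \lesssim \|g\|_{L^2_{x,t}}$ and $\|\nabla_\theta M_\theta g\|_{L^2_y} \lesssim N^{1/2}\|g\|_{L^2_{x,t}}$ (the derivative in the direction parameter costs one power of $N^{1/2}$ because the plate has transversal width $N^{-1/2}$). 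Summing over the $\sim N^{d/2}$ values of $\theta$ with these bounds gives $\|\sup_\theta M_\theta g\|_{L^2}^2 \lesssim N^{d/2}\|g\|_2^2$ at worst — which is too lossy — so the actual argument must exploit almost-orthogonality of the plates $P_\theta$ for different $\theta$: for fixed $y$, the plates $\{P_\theta(y)\}_\theta$ have bounded overlap in $(x,t)$-space (or overlap with only a logarithmic factor), because two plates pointing in $N^{-1/2}$-separated directions diverge. This overlap bound is exactly where the transversality hypothesis \eqref{eq:TransversalityAssumptionAverages} enters: the plate direction is governed by the $d$-tuple $(q_1^*(\xi'_\theta),\dots)$ built from the inverted forms $A_i^{-1}$, and \eqref{eq:TransversalityAssumptionAverages} guarantees that as $\xi'_\theta$ moves, the plate genuinely tilts in all $d$ of the base directions, so distinct $\theta$ give essentially disjoint plates through a fixed point. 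With bounded overlap, $\sum_\theta \|M_\theta g\|_{L^2_y}^2 \lesssim \|g\|_{L^2_{x,t}}^2$ up to $N^\varepsilon$, and the derivative term is handled identically with one extra $N^{1/2}$ absorbed against the $N^{-1/2}$ in the Sobolev inequality.

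The main obstacle I anticipate is precisely this overlap/almost-orthogonality estimate for the plates, and in particular verifying it with the right dependence on $N$: one needs that for a.e.\ fixed $y$, $\#\{\theta : (x,t) \in P_\theta(y)\} \lesssim N^\varepsilon$ uniformly in $(x,t)$, which amounts to a Córdoba-type $L^2$ orthogonality / geometric incidence bound for the family of $(d+m)$-dimensional affine plates $\{(y + \sum_i t_i v_i^\theta, t)\}$ thickened at scales $N^{-1/2}$ and $N^{-1}$. Here the specific structure of the two-parameter average in \eqref{eq:TwoParamLocalSmoothing} (as the text flags, ``a more specific property'' is used) makes the plate directions the dual complex-parabola family, for which the overlap bound can be checked by a direct computation: two plates through a common point with direction parameters separated by $\rho \geq N^{-1/2}$ are disjoint once $|t| \gtrsim \rho^{-1} N^{-1}$, i.e.\ on most of $\mathrm{supp}(\chi_1)^m$. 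I would carry the argument out for this concrete case, reducing to a standard $\ell^2$-decoupling-free maximal estimate for averages over a curve of directions, and then cite \cite{MockenhauptSeegerSogge1992} for the remaining routine interpolation/summation steps; the $N^\varepsilon$ loss is harmless since it feeds into the already-$\varepsilon$-lossy chain leading to \eqref{eq:MultiParameterLocalSmoothing}.
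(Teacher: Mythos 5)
You have the right outer framework---the Mockenhaupt--Seeger--Sogge strategy of trading $\sup_\theta$ for an $L^2_\theta$ norm via a Sobolev-type inequality in the direction parameter $\xi'_\theta$, followed by an $L^2$ orthogonality input---and you correctly note that the naive summation over $\sim N^{d/2}$ caps gives an unacceptable loss. But the way you propose to close the gap (a Córdoba-type physical-space incidence bound $\#\{\theta : (x,t)\in P_\theta(y)\}\lesssim N^\varepsilon$ for the plates) is flagged as a ``direct computation'' and then not carried out; that is exactly the missing idea, not a routine verification. It is also not clear that the transversality hypothesis \eqref{eq:TransversalityAssumptionAverages} alone gives your overlap bound: the paper itself cautions that this step uses a ``more specific property of the two-parameter average'' beyond \eqref{eq:NonDegeneracy}--\eqref{eq:TransversalityAssumptionAverages}, so the almost-orthogonality you want cannot simply be attributed to the general transversality assumption.

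The paper's actual argument for the orthogonality step runs on the Fourier side rather than via plate incidences, and it uses an additional dyadic decomposition that your proposal omits. One localizes $A_\theta$ to dyadic annuli $\bigl|\tfrac{\xi'}{\xi_{d+1}}-\xi'_\theta\bigr| \sim 2^k N^{-1/2}$, producing operators $A_\theta^{N,k}$. After the Sobolev step (which costs a factor $N 2^{2k}$, since the $\theta$-frequency of $A_\theta^{N,k}g$ is $\lesssim N^{1/2}2^k$), one computes $\|A_\theta^{N,k}g\|^2_{L^2_y L^2_\theta}$ by Plancherel in $y$, which turns the $\xi'_\theta$-integral into an oscillatory kernel $H_{N,k}(t,t';\xi)$ in the time variables. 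Non-stationary phase in $\xi'_\theta$---this is where the phase gradient $\nabla_{\xi'_\theta}\psi\sim N^{1/2}2^k|t-t'|$ and hence the structure of the $A_i^{-1}$ enters---gives decay of $H_{N,k}$ at scale $|t-t'|\sim N^{-1/2}2^{-k}$, and Young's inequality yields $\|A_\theta^{N,k}g\|^2_{L^2_yL^2_\theta}\lesssim N^{-2}\|g\|^2$ uniformly in $k$. The Sobolev loss $N2^{2k}$ and the kernel gain $N^{-2}$ balance to $N^{-1}2^{2k}\leq 1$ on the range $2^k\lesssim N^{1/2}$, so summing over $k$ costs only a logarithm. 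This $TT^*$ computation is the quantitative substitute for your plate-overlap bound, and the dyadic decomposition in $k$ is essential bookkeeping without which neither the Sobolev step nor the kernel estimate has the right scaling; your proposal, which works at the single scale $N^{-1/2}$, would need to reconstruct both of these ingredients to close.
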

Indeed, plugging the key identity into \eqref{eq:MaximalFunctionReductionII} we obtain
\begin{equation*}
\eqref{eq:MaximalFunctionReductionII} \lesssim_\varepsilon N^{\varepsilon} \| f \|^2_{L^4}.
\end{equation*}
This will complete the proof of Theorem \ref{thm:MultiParameterSmoothing}.

To carry out the proof of Lemma \ref{lem:MaximalFunctionEstimate}, we take a look at the kernel:
\begin{equation*}
K_{\theta}(t,x;y) = \int e^{i( \langle x-y, \xi \rangle+ t_1 \frac{\langle \xi', A_1^{-1} \xi' \rangle}{\xi_{d+1}} + \ldots + t_m \frac{\langle \xi', A_m^{-1} \xi' \rangle}{\xi_{d+1}} )} \chi_{\theta}(\xi) \beta(|\xi|/N) d\xi
\end{equation*}
with $\big| \frac{\xi'}{\xi_{d+1}} - \xi'_{\theta}| \lesssim N^{-\frac{1}{2}}$, $|\xi_{d+1}| \sim N$ within the support of $\chi_{\theta}$. We obtain for the phase function for $\xi \in \text{supp}(\chi_{\theta}) \cap \text{supp}(\beta_N)$:
\begin{equation*}
\begin{split}
\Phi(x-y,\xi) &= \langle x-y, \xi \rangle + t_1 (2 A_1^{-1} \xi'_{\theta}, - \langle A_1^{-1} \xi'_{\theta}, \xi'_{\theta} \rangle) \cdot \xi + \ldots \\
&\quad + t_m (2 A_m^{-1} \xi'_{\theta}, - \langle A_m^{-1} \xi'_{\theta}, \xi'_{\theta} \rangle) \cdot \xi + \mathcal{O}(|t| N^{-1} |\xi|).
\end{split}
\end{equation*}
This means that the averaging over $|K_{\theta}|$ as in $A_\theta$ describes an averaging over $2 A_i^{-1} \xi'_{\theta}$ (parametrized by $t_i$) in directions $x'$ and $- \langle A_1^{-1} \xi'_{\theta}, \xi'_{\theta} \rangle - \ldots - \langle A_m^{-1} \xi'_{\theta}, \xi'_{\theta} \rangle$ in direction $x_{d+1}$.

We define the localized operators:
\begin{equation*}
A_{\theta}^{N,k} g(y) = C \iint e^{i \bar{\Phi}(x-y,\xi)} \beta(|\xi|/N) \beta \big( 2^{-2k} N \big| \frac{\xi'}{\xi_{d+1}} - \xi'_{\theta} \big|^2 \big) \hat{g}(\xi,t) d\xi dt
\end{equation*}
with
\begin{equation*}
\begin{split}
\bar{\Phi}(x-y,\xi) &= \langle x- y, \xi \rangle + t_1 (2A_1^{-1} \xi'_{\theta}, - \langle A_1^{-1} \xi'_{\theta}, \xi'_{\theta} \rangle) \cdot \xi \\
&\quad + t_2 (2 A_2^{-1} \xi'_{\theta}, - \langle A_2^{-1} \xi'_{\theta}, \xi'_{\theta} \rangle) \cdot \xi  + \ldots + t_m (2A_m^{-1} \xi'_{\theta}, - \langle A_m^{-1} \xi'_{\theta}, \xi'_{\theta} \rangle) \cdot \xi.
\end{split}
\end{equation*}

The summation over $\{ k : 2^k \lesssim N^{\frac{1}{2}} \}$ will incur a logarithmic loss in $N$ provided that we obtain uniform bounds in $k$. To obtain an estimate for
\begin{equation*}
\sup_{\theta} |A_{\theta}^{N,k} g|^2 (y),
\end{equation*}
we use Sobolev embedding to reduce to an estimate for $\| A_{\theta}^{N,k} g \|_{L^2_{\theta} L^2_y}$. 

The following computations are restricted to the special case considered in Theorem \ref{thm:MultiParameterSmoothing}. For Sobolev embedding, we compute the $\theta$-frequencies for $A_{\theta}^{N,k} g$:
\small
\begin{equation*}
\begin{split}
&\quad \int A_{\theta}^{N,k} g(y) e^{-i \xi'_{\theta} \tau} d \xi'_{\theta} \\
&= \int e^{i( \langle x-y, \xi \rangle + t_1 (2 A_1^{-1} \xi'_{\theta}, - \langle \xi'_{\theta}, A_1^{-1} \xi'_{\theta} \rangle) \cdot \xi + t_2 ( 2 A_2^{-1} \xi'_{\theta}, - \langle \xi'_{\theta}, A_2^{-1} \xi'_{\theta} \rangle) \cdot \xi - i \xi'_{\theta} \tau)} \beta_{N,k}(\xi) \hat{g}(\xi) d\xi d \xi'_{\theta}.
\end{split}
\end{equation*}
\normalsize
We compute the stationary point of the phase in $\xi'_{\theta}$ as
\begin{equation*}
\psi(\xi'_{\theta}) = t_1 (2 A_1^{-1} \xi'_{\theta}, - \langle \xi'_{\theta}, A_1^{-1} \xi'_{\theta} \rangle) \cdot \xi + t_2 ( 2 A_2^{-1} \xi'_{\theta}, - \langle \xi'_{\theta}, A_2^{-1} \xi'_{\theta} \rangle) \cdot \xi) - \xi'_{\theta} \cdot \tau
\end{equation*}
and
\begin{equation*}
\begin{split}
\nabla_{\xi'_{\theta}} \psi(\xi'_{\theta}) &= t_1 (2 A_1^{-1} \xi' - 2 A_1^{-1} \xi'_{\theta} \xi_3) + t_2 (2 A_2^{-1} \xi' - 2 A_2^{-1} \xi'_{\theta} \xi_3) - \tau \\
&= t_1 \xi_3 ( 2 A_1^{-1} \big( \frac{\xi'}{\xi_3} - \xi'_{\theta} \big) ) + t_2 \xi_3 ( 2 A_2^{-1} \xi' - 2 A_2^{-1} \xi'_{\theta} \xi_3) - \tau.
\end{split}
\end{equation*}
Keeping in mind that $|t_i| \lesssim 1$, $|\xi_3| \sim N$, and $\big| \frac{\xi'}{\xi_3} - \xi'_{\theta} \big| \lesssim N^{-\frac{1}{2}} 2^k$, we have for $|\tau| \gg N^{\frac{1}{2}+\varepsilon} 2^k$ that $|\nabla_{\xi'_{\theta}} \psi(\xi'_{\theta})| \sim |\tau|$. This gives rapid decay for $|\tau| \gg N^{\frac{1}{2}+\varepsilon} 2^k$.

With essential boundedness of the frequencies by $\lesssim N^{\frac{1}{2}} 2^k$ an application of Sobolev embedding $W^{1,2+\varepsilon}(\R^2) \hookrightarrow L^{\infty}(\R^2)$ gives
\begin{equation}
\label{eq:SobolevEmbeddingMaximalFunction}
\| \sup_{\theta} |A_{\theta}^{N,k} g|^2(y) \|_{L^1_y} \lesssim_\varepsilon N^{\varepsilon} N 2^{2k} \| A_{\theta}^{N,k} g \|^2_{L^2_{\theta} L^2_{y}}.
\end{equation}

We turn to the key estimate for $\| A_{\theta}^{N,k} g(y) \|_{L^2_{y} L^2_{\theta}}$ by rewriting the square as:
\small
\begin{equation*}
\begin{split}
&\quad \big\| \int e^{i( \langle x-y, \xi \rangle + t_1 (2 A_1^{-1} \xi'_{\theta}, - \langle \xi'_{\theta}, A_1^{-1} \xi'_{\theta} \rangle) \cdot \xi + t_2 (2 A_2^{-1} \xi'_{\theta}, - \langle \xi'_{\theta}, A_2^{-1} \xi'_{\theta} \rangle ) \cdot \xi)} \beta_{N,k}(\xi) \hat{g}(\xi,t) d\xi \big\|^2_{L^2_{y,t} L^2_{\theta}} \\
&= \int \iint H_{N,k}(t,t') \hat{g}(\xi,t) \hat{\bar{g}}(\xi,t')  dt dt' d\xi
\end{split}
\end{equation*}
\normalsize
with
\small
\begin{equation*}
H_{N,k}(t,t';\xi) = \int e^{i(t_1 - t_1')( 2A_1^{-1} \xi'_{\theta}, - \langle \xi'_{\theta}, A_1^{-1} \xi'_{\theta} \rangle) \cdot \xi + i (t_2 - t_2') ( 2 A_2^{-1} \xi'_{\theta}, - \langle \xi'_{\theta}, A_2^{-1} \xi'_{\theta} \rangle) \cdot \xi} |\beta_{N,k}(\xi)|^2 d \xi'_{\theta}.
\end{equation*}
\normalsize
The behavior of the kernel is again computed by non-stationary phase. First we consider $k \geq 1$. It holds on the support of $\xi'_{\theta}$ for fixed $\xi$:
\begin{equation*}
\nabla_{\xi'_{\theta}} \psi(\xi'_{\theta}) = \{ 2(t_1 - t_1') A_1^{-1} \big( \frac{\xi'}{\xi_3} - \xi'_{\theta} \big) + 2 (t_2 - t_2') A_2^{-1} \big( \frac{\xi'}{\xi_3} - \xi'_{\theta} \big) \} \xi_3,
\end{equation*}
which implies
\begin{equation*}
|\nabla_{\xi'_{\theta}} \psi| \sim N^{\frac{1}{2}} 2^k |t-t'|.
\end{equation*}

We obtain by repeated integration by parts for $M \geq 1$:
\begin{equation*}
|H_{N,k}(t,t';\xi)| \lesssim_M (N^{-\frac{1}{2}} 2^k)^2 (1+ N^{\frac{1}{2}} 2^k |t-t'|)^{-M}.
\end{equation*}
We apply Young's convolution inequality to find
\begin{equation*}
\| A_{\theta}^{N,k} g(y) \|^2_{L^2_{y} L^2_{\theta}} \lesssim N^{-2} \| g \|^2_{L^2_{y}}.
\end{equation*}
Plugging this estimate into \eqref{eq:SobolevEmbeddingMaximalFunction} we find
\begin{equation*}
\| \sup_{\theta} |A_{\theta}^{N,k} g|^2(y) \|_{L^1_y} \lesssim_\varepsilon N^{\varepsilon} \| g \|^2_{L^2_{y,t}}.
\end{equation*}
Summation over $\{ k: 2^k \lesssim N^{\frac{1}{2}} \}$ incurs another logarithmic loss. It remains to note the estimate for $k=0$ given by
\begin{equation*}
|H_{N,k}(t,t';\xi)| \lesssim N^{-1}
\end{equation*}
from disregarding the phase function entirely. The proof of Lemma \ref{lem:MaximalFunctionEstimate} is complete. 

$\hfill \Box$

\section*{Acknowledgements}

Financial support from the Humboldt foundation (Feodor-Lynen fellowship) and partial support by the NSF grant DMS-2054975 is gratefully acknowledged.


\begin{thebibliography}{10}

\bibitem{BiggsBrandesHughes2022}
Kirsti~D. {Biggs}, Julia {Brandes}, and Kevin {Hughes}.
\newblock {Reinforcing a Philosophy: A counting approach to square functions
  over local fields}.
\newblock {\em arXiv e-prints}, page arXiv:2201.09649, January 2022.

\bibitem{Bourgainguth2011}
Jean Bourgain and Larry Guth.
\newblock Bounds on oscillatory integral operators based on multilinear
  estimates.
\newblock {\em Geom. Funct. Anal.}, 21(6):1239--1295, 2011.

\bibitem{ChenGuoYang2023}
Mingfeng {Chen}, Shaoming {Guo}, and Tongou {Yang}.
\newblock {A multi-parameter cinematic curvature}.
\newblock {\em arXiv e-prints}, page arXiv:2306.01606, June 2023.

\bibitem{Cordoba1979}
A.~C\'{o}rdoba.
\newblock A note on {B}ochner-{R}iesz operators.
\newblock {\em Duke Math. J.}, 46(3):505--511, 1979.

\bibitem{Cordoba1982}
Antonio C\'{o}rdoba.
\newblock Geometric {F}ourier analysis.
\newblock {\em Ann. Inst. Fourier (Grenoble)}, 32(3):vii, 215--226, 1982.

\bibitem{Fefferman1973}
Charles Fefferman.
\newblock A note on spherical summation multipliers.
\newblock {\em Israel J. Math.}, 15:44--52, 1973.

\bibitem{GuthMaldague2023}
Larry {Guth} and Dominique {Maldague}.
\newblock {A sharp square function estimate for the moment curve in
  $\mathbb{R}^n$}.
\newblock {\em arXiv e-prints}, page arXiv:2309.13759, September 2023.

\bibitem{GuthWangZhang2020}
Larry Guth, Hong Wang, and Ruixiang Zhang.
\newblock A sharp square function estimate for the cone in {$\Bbb {R}^3$}.
\newblock {\em Ann. of Math. (2)}, 192(2):551--581, 2020.

\bibitem{HickmanZahl2025}
Jonathan {Hickman} and Joshua {Zahl}.
\newblock {Improved $L^p$ bounds for the strong spherical maximal operator}.
\newblock {\em arXiv e-prints}, page arXiv:2502.02795, February 2025.

\bibitem{Maldague2024}
Dominique Maldague.
\newblock A sharp square function estimate for the moment curve in
  {$\Bbb{R}^3$}.
\newblock {\em Invent. Math.}, 238(1):175--246, 2024.

\bibitem{MarlettaRicci1998}
Gianfranco Marletta and Fulvio Ricci.
\newblock Two-parameter maximal functions associated with homogeneous surfaces
  in {$\bold R^n$}.
\newblock {\em Studia Math.}, 130(1):53--65, 1998.

\bibitem{MarlettaRicciZienkiewicz1998}
Gianfranco Marletta, Fulvio Ricci, and Jacek Zienkiewicz.
\newblock Two-parameter maximal functions associated with degenerate
  homogeneous surfaces in {$\bold R^3$}.
\newblock {\em Studia Math.}, 130(1):67--75, 1998.

\bibitem{MockenhauptSeegerSogge1992}
Gerd Mockenhaupt, Andreas Seeger, and Christopher~D. Sogge.
\newblock Wave front sets, local smoothing and {Bourgain}'s circular maximal
  theorem.
\newblock {\em Ann. Math. (2)}, 136(1):207--218, 1992.

\bibitem{Schippa2023Complex}
Robert {Schippa}.
\newblock {Decoupling for complex curves and improved decoupling for the cubic
  moment curve}.
\newblock {\em arXiv e-prints}, page arXiv:2302.10884, February 2023.

\bibitem{Schippa2024}
Robert {Schippa}.
\newblock {Generalized square function estimates for curves and their conical
  extensions}.
\newblock {\em arXiv e-prints}, page arXiv:2408.07248, August 2024.

\bibitem{Sogge2017}
Christopher~D. Sogge.
\newblock {\em Fourier integrals in classical analysis}, volume 210 of {\em
  Camb. Tracts Math.}
\newblock Cambridge: Cambridge University Press, 2nd edition edition, 2017.

\end{thebibliography}
\end{document}